\documentclass[a4paper,11pt]{amsart}

\usepackage[latin1]{inputenc}
\usepackage[english]{babel}
\usepackage{hyperref,tikz}

\usetikzlibrary{3d}
\usetikzlibrary{calc}
\usetikzlibrary{math}

\pgfmathsetmacro\xx{1/sqrt(2)}
\pgfmathsetmacro\xy{1/sqrt(6)}
\pgfmathsetmacro\zy{sqrt(2/3)}

\tikzmath%
{%
  function crossx(\mx,\my,\mz,\nx,\ny,\nz)
  {
    \pxx = \my*\nz-\mz*\ny;
    \pyy = \mz*\nx-\mx*\nz;
    \pzz = \mx*\ny-\my*\nx;
    return {\pxx/sqrt(\pxx*\pxx+\pyy*\pyy+\pzz*\pzz)};
  };
  function crossy(\mx,\my,\mz,\nx,\ny,\nz)
  {
    \pxx = \my*\nz-\mz*\ny;
    \pyy = \mz*\nx-\mx*\nz;
    \pzz = \mx*\ny-\my*\nx;
    return {\pyy/sqrt(\pxx*\pxx+\pyy*\pyy+\pzz*\pzz)};
  };
  function crossz(\mx,\my,\mz,\nx,\ny,\nz)
  {
    \pxx = \my*\nz-\mz*\ny;
    \pyy = \mz*\nx-\mx*\nz;
    \pzz = \mx*\ny-\my*\nx;
    return {\pzz/sqrt(\pxx*\pxx+\pyy*\pyy+\pzz*\pzz)};
  };
}

\newcommand{\greatcircle}[6] 
{%
  \coordinate (P)  at (#1,#2,#3);                  
  \coordinate (N)  at ($(0,0,0)!#6*1.25cm!(P)$);   
  \coordinate (S)  at ($-1*(N)$);                  
  \coordinate (E)  at ($(0,0,0)!-1.25cm!270:(P)$); 
  \coordinate (W)  at ($-1*(E)$);                  
  \coordinate (NW) at ($(N)+(W)$);
  \coordinate (NE) at ($(N)+(E)$);
  \coordinate (SW) at ($(S)+(W)$);
  \coordinate (SE) at ($(S)+(E)$);
  \pgfmathsetmacro\ptheta{atan(#2/#1)} 
  \pgfmathsetmacro\pphi  {#5*acos(#3)} 
  \begin{scope}
    \clip (W) -- (SW) -- (SE) -- (E) -- cycle;
    \draw[rotate around z=\ptheta,rotate around y=\pphi,%
          canvas is xy plane at z=0,#4] (0,0) circle (1);
  \end{scope}
  \begin{scope}
    \clip (W) -- (NW) -- (NE) -- (E) -- cycle;
    \draw[rotate around z=\ptheta,rotate around y=\pphi,%
          canvas is xy plane at z=0,#4,densely dotted] (0,0) circle (1);
  \end{scope}
}

\usepackage[T1]{fontenc}
\usepackage{amsmath,bbold}
\usepackage{amsfonts}
\usepackage{amssymb}
\usepackage{amsthm,eepic}
\usepackage{mathrsfs}
\usepackage{enumitem}
\usepackage{graphicx}
\usepackage{footnote}
\usepackage{hyperref,tikz}
\usetikzlibrary{quotes,angles}
\hypersetup{colorlinks=true,    linkcolor=blue,
citecolor=red,       filecolor=BrickRed,       urlcolor=darkgreen
}

\renewcommand{\geq}{\geqslant}
\renewcommand{\leq}{\leqslant}
\newtheorem{thm}{Theorem}
\newtheorem{prop}{Proposition}
\newtheorem{rem}{Remark}
\newtheorem{cor}[prop]{Corollary}
\newtheorem{lem}[prop]{Lemma}

\newcommand{\be}{\begin{equation}}
\newcommand{\ee}{\end{equation}}

\usepackage{pgfplots}

\pgfplotsset{compat=newest}

\linespread{1.1}

\colorlet{darkgreen}{green!50!black}
\definecolor{darkseagreen}{rgb}{0.56, 0.74, 0.56}
\definecolor{lightcyan}{rgb}{0.88, 1.0, 1.0}
\definecolor{lightblue}{rgb}{0.68, 0.85, 0.9}
\definecolor{palecerulean}{rgb}{0.61, 0.77, 0.89}
\definecolor{lgreen} {RGB}{180,210,100}
\definecolor{dblue}  {RGB}{20,66,129}
\definecolor{ddblue} {RGB}{11,36,69}
\definecolor{lred}   {RGB}{220,0,0}
\definecolor{nred}   {RGB}{224,0,0}
\definecolor{norange}{RGB}{230,120,20}
\definecolor{nyellow}{RGB}{255,221,0}
\definecolor{ngreen} {RGB}{98,158,31}
\definecolor{dgreen} {RGB}{78,138,21}
\definecolor{nblue}  {RGB}{28,130,185}
\definecolor{jblue}  {RGB}{20,50,100}
\definecolor{Apricot} {RGB}{255, 170, 123} 
\definecolor{dpurple}  {RGB}{53,21,93}

\def\R{{\mathbb R}}

\def\N{{\mathbb N}}

\def\1{\mathbb{1}}

\def\eps{{\epsilon}}

\def\B{{\mathbb B}}

\DeclareMathOperator{\dom}{dom}
\DeclareMathOperator{\arccot}{arccot}

\def\R{\mathbb{R}}

\renewcommand{\epsilon}{\varepsilon}

\oddsidemargin=0pt \evensidemargin=0pt \textwidth=150mm
\textheight=22.00cm \voffset=-10mm \pretolerance=3000

\usepackage{color}
\definecolor{darkgreen}{rgb}{0,0.4,0}
\definecolor{MyDarkBlue}{rgb}{0,0.08,0.50}
\definecolor{BrickRed}{rgb}{0.65,0.08,0}

\newcommand{\Mcal}{\mathcal{M}}
\renewcommand{\cot}{\mathrm{cot}\,}
\newcommand{\ep}{\varepsilon}
\newcommand{\Jac}{\mathrm{Jac}\,}
\newcommand{\spec}{\mathrm{Spec}\,}

\newcommand{\Abb}{\mathbb{A}}

\newcommand{\Gbb}{\mathbb{G}}
\newcommand{\Ecal}{\mathcal{E}}
\newcommand{\Bcal}{\mathcal{B}}
\newcommand{\Dcal}{\mathcal{D}}
\newcommand{\Hcal}{\mathcal{H}}

\newcommand{\alp}{\alpha}
\newcommand{\dis}{\displaystyle}
\newcommand{\pid}{\frac{\pi}{2}}


\title[On the rationality of 3D lattice walks confined to an octant]{Lattice walks confined to an octant in dimension 3:\\(non-)rationality of the second critical exponent}

\author{Luc Hillairet}
\address{Institut Denis Poisson, Universit\'e de Tours et Universit\'e d'Orl\'eans, Parc de Grandmont, 37200 Tours, France}
\email{luc.hillairet@univ-orleans.fr}

\author{Helen Jenne}
\address{CNRS \and Institut Denis Poisson, Universit\'e de Tours et Universit\'e d'Orl\'eans, Parc de Grandmont, 37200 Tours, France}
\email{helen.jenne@lmpt.univ-tours.fr}

\author{Kilian Raschel}
\address{CNRS \and Laboratoire Angevin de Recherche en Math\'ematiques, Universit\'e d'Angers, 2 boulevard Lavoisier, 49000 Angers, France}
\email{raschel@math.cnrs.fr}
\thanks{This project has received funding from the European Research Council (ERC) under the European Union's Horizon 2020 research and innovation programme under the Grant Agreement No.\ 759702.}

\keywords{Lattice walks; Asymptotic enumeration; D-finite series; Heat kernel; Perturbation theory}
\subjclass[2010]{Primary ; Secondary }

\date{\today}

\begin{document}

\begin{abstract}
In the field of enumeration of walks in cones, it is known how to compute asymptotically the number of excursions (finite paths in the cone with fixed length, starting and ending points, using jumps from a given step set). As it turns out, the associated critical exponent is related to the eigenvalues of a certain Dirichlet problem on a spherical domain. An important underlying question is to decide whether this asymptotic exponent is a (non-)rational number, as this has important consequences on the algebraic nature of the associated generating function. In this paper, we ask  whether such an excursion sequence might admit an asymptotic expansion with a first rational exponent and a second non-rational exponent. While the current state of the art does not give any access to such many-term expansions, we look at the associated continuous problem, involving Brownian motion in cones. Our main result is to prove that in dimension three, there exists a cone such that the heat kernel (the continuous analogue of the excursion sequence) has the desired rational/non-rational asymptotic property. Our techniques come from spectral theory and perturbation theory. More specifically, our main tool is a new Hadamard formula, which has an independent interest and allows us to compute the derivative of eigenvalues of spherical triangles along infinitesimal variations of the angles.
\end{abstract}

\maketitle

\section{Introduction}

\subsection*{The model and our main question}
A lattice walk is a sequence of points $P_0, P_1,\ldots ,P_n$ of $\mathbb Z^d$, $d\geq 1$. The points $P_0$ and $P_n$ are its starting and end points, respectively, the consecutive differences $P_{i+1}-P_i$ its steps, and $n$ is its length. Given a set $\mathcal S\subset \mathbb Z^d$, called the step set, a set $C\subset \mathbb Z^d$ called the domain (which in this paper will systematically be a cone), and elements $P$ and $Q$ of $C$, we are interested in the number $e(P,Q; n)$ of walks (or excursions) of length $n$ that start at $P$, have all their steps in $\mathcal S$, have all their points in $C$, and end at $Q$.
In the present note, the main problem we would like to address is the following: does there exist a walk model (i.e., a step set and a cone in $\mathbb R^d$) such that as $n\to\infty$, one has the asymptotics
\begin{equation}
\label{eq:two-term-asymp}
   e(P,Q;n) = \rho^n \cdot \bigl(K_1\cdot  n^{\alpha_1} + K_2\cdot  n^{\alpha_2} +\cdots +  K_p\cdot  n^{\alpha_p}  +o(n^{\alpha_p}) \bigl),
\end{equation}
with some exponential growth $\rho>0$ and critical exponents such that $\alpha_1,\ldots,\alpha_{p-1}\in\mathbb Q$ and $\alpha_p\notin\mathbb Q$? (The constants $K_1,\ldots ,K_p$ are assumed to be non-zero.)
We now present the context and explain our motivations to look at this particular problem.

\subsection*{Asymptotics of the excursion sequence and relation to D-finiteness}
Is there a simple formula for $e(P,Q;n)$ in terms of the coordinates of $P,Q$ and the length $n$ of the walk? If not, can we at least say something about the asymptotic behaviour \eqref{eq:two-term-asymp} of these numbers as $n$ goes to infinity? A first step towards answering these questions can be done by considering the excursion generating function
\begin{equation}
\label{eq:def_generating_function}
   e_{P,Q}(t)=\sum_{n\geq 0}e(P,Q;n) t^n\in\mathbb Q[[t]]
\end{equation}
that is associated with these numbers and determining whether it is algebraic, or, if not, whether it is at least D-finite. Recall that a series is D-finite if it satisfies a non-trivial linear differential equation with polynomial coefficients. Knowing that a given series is D-finite not only implies nice computational properties of its coefficients, but also allows us to classify the combinatorial model according to the complexity of the underlying generating function. There has recently been a dense literature around the above questions, in relation with the probabilistic model of random walks in cones. 

As it turns out, there is a strong relation between D-finiteness of a given series and the asymptotic behavior of its coefficients. For example, the following statement (recalled in \cite[Thm~3]{BoRaSa-14}) is a consequence of results by Andr\'e, Chudnovski and Katz:
\begin{lem}
\label{thm:AnChKa}
Let $(e(n))_{n\geq0}$ be an integer-valued sequence whose $n$-th term behaves~asymptotically like
\begin{equation}
\label{eq:one-term-asymp}
   e(n)\sim K\cdot \rho^n \cdot n^\alpha
\end{equation}
for some real positive constants $K$ and $\rho$. If the singular exponent $\alpha$ is irrational, then the generating function $e(t) = \sum\limits_{n=0}^\infty e(n) t^n$ is not D-finite.
\end{lem}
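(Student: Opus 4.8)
The plan is to deduce non-D-finiteness from the growth rate of the coefficients, using the fact that the coefficient sequence of a D-finite series must have a very constrained asymptotic shape. Recall that if $e(t)=\sum e(n)t^n$ is D-finite, then it satisfies a linear ODE with polynomial coefficients, and the singularity analysis of such equations (via the theory of regular and irregular singular points, and the Birkhoff--Trjitzinsky / Turrittin normal forms) forces each coefficient $e(n)$ to admit an asymptotic expansion as a finite $\mathbb{C}$-linear combination of terms of the form $n! ^{r/s}\, \omega^n\, n^{\beta}(\log n)^k$, where $r,s,k\in\mathbb{Z}$, $\beta$ is an algebraic number, and $\omega$ is algebraic. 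The first thing I would do is isolate precisely this statement and attribute it carefully: it is the combination of the Chudnovsky--Chudnovsky result on the arithmetic of G-functions with the classical André--Katz theory on the denominators of solutions of differential equations with rational coefficients; in particular, if $e(t)$ is D-finite \emph{and} the sequence $e(n)$ grows only polynomially times an exponential (no factorial factor, as forced by~\eqref{eq:one-term-asymp}), then the singular exponent $\alpha$ appearing in the dominant term must be a \emph{rational} number.

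The second step is to match this against the hypothesis. We are given $e(n)\sim K\cdot\rho^n\cdot n^\alpha$ with $K,\rho>0$ real. If $e(t)$ were D-finite, the previous paragraph applies: the dominant asymptotic term of $e(n)$ would be of the form $c\,\omega^n n^{\beta}(\log n)^k$ with $\beta\in\mathbb{Q}$ (and $\omega$ algebraic). Comparing with $K\rho^n n^\alpha$ — using that an asymptotic expansion is unique, so the leading term is determined — we must have $k=0$, $\omega=\rho$, $c=K$, and crucially $\beta=\alpha$. Hence $\alpha=\beta\in\mathbb{Q}$, contradicting the assumption that $\alpha$ is irrational. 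Therefore $e(t)$ is not D-finite.

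The one subtlety to address carefully — and the step I expect to be the main obstacle in writing this cleanly — is the passage from ``D-finite'' to ``the coefficient $\alpha$ is an algebraic number.'' The general Birkhoff--Trjitzinsky expansion only guarantees algebraic exponents when one knows the relevant connection/Stokes data is itself algebraic, and in full generality a D-finite series can have transcendental constants $K$ in its asymptotics. The resolution, which is exactly the content of the André--Chudnovsky--Katz circle of results cited as \cite[Thm~3]{BoRaSa-14}, is that the \emph{exponents} (as opposed to the multiplicative constants) of a D-finite generating function with integer — or even just rational — coefficients are necessarily rational: the local exponents at each singularity are roots of the indicial polynomial, which has rational coefficients, and the arithmetic G-function theory rules out the irrational algebraic case by a global height/denominator argument on the solution's Taylor coefficients. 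Since our sequence is \emph{integer-valued} by hypothesis, this applies verbatim. So the argument is: integer coefficients $+$ D-finite $\Rightarrow$ rational singular exponents; contrapositive gives the lemma. I would present the proof as this two-line contrapositive, with a remark pointing to \cite{BoRaSa-14} and the original sources for the non-elementary input.
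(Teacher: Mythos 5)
Your sketch is correct and matches the paper's treatment: the paper gives no proof of this lemma, simply citing \cite[Thm~3]{BoRaSa-14} as a consequence of the Andr\'e--Chudnovsky--Katz theory of $G$-functions, which is precisely what your contrapositive argument invokes (integer coefficients plus at-most-geometric growth make $e(t)$ a $G$-function; its minimal annihilating operator is then a $G$-operator, hence globally nilpotent by Katz's theorem, hence has only regular singularities with \emph{rational} local exponents, and singularity analysis transfers this to $\alpha\in\mathbb{Q}$). One minor imprecision worth fixing: at an algebraic singular point $z_0$ the indicial polynomial has coefficients in $\mathbb{Q}(z_0)$, not in $\mathbb{Q}$, so the local exponents are a priori only algebraic --- their rationality is exactly the non-trivial content supplied by the $G$-operator/global-nilpotence theorem, not something read off from the indicial equation having rational coefficients.
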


Given the above result, it is natural to ask whether one may compute and study the rationality of the critical exponent $\alpha$ in the asymptotics \eqref{eq:one-term-asymp} of the excursion sequence $e(P,Q;n)$ (equivalently, the dominant term in the asymptotics \eqref{eq:two-term-asymp}).

In dimension $1$, the combinatorial model of walks in cones reduces to that of walks confined to the positive half-line, as studied e.g.\ in \cite{BaFl-02}. In this context, it is well known that only simple exponents appear in the dominant asymptotics, namely $\alpha=0$, $-\frac{1}{2}$ or $-\frac{3}{2}$ (depending on the drift of the model), and their translations by integers in the complete asymptotic expansion \eqref{eq:two-term-asymp}. Accordingly, there is nothing to say from the perspective of the rationality of asymptotic exponents. We remark that these simple exponents are deduced from the algebraicity of the associated generating function \eqref{eq:def_generating_function} and classical transfer theorems (singularity analysis).

Given a cone in higher dimension $d\geq 2$, the generating function \eqref{eq:def_generating_function} is in general not algebraic (and even non-D-finite, see \cite{KuRa-12,BoRaSa-14} for the case of the quarter plane in dimension $2$), and the first problem is to access the critical exponent $\alpha$. This result is obtained by Denisov and Wachtel \cite{DeWa-15}: for a large class of cones in arbitrary dimension, they derive the one-term asymptotics \eqref{eq:one-term-asymp} for the excursion sequence $e(P,Q;n)$. In particular, they show that \cite[Eq.~(12)]{DeWa-15}
\begin{equation}
\label{eq:formula_alpha_DW}
   \alpha=-\sqrt{\lambda_1+(d/2-1)^2}-1,
\end{equation}
where $d$ is the dimension and $\lambda_1$ is interpreted as the principal Dirichlet eigenvalue for the Laplace-Beltrami operator on the subdomain of the sphere $\mathbb S^{d-1}$ given by 
\begin{equation}
\label{eq:def_domain}
   (LC)\cap\mathbb S^{d-1},
\end{equation}
with $C$ being the domain of confinement (typically an orthant $\mathbb R_+^d$) and $L$ a linear application, which depends on the model. One should not be surprised by the presence of the linear transform $L$ in \eqref{eq:def_domain}: as a matter of comparison, the classical central limit theorem for random walks in $\mathbb R^d$ involves the drift and the covariance matrix of the process, so as to put the random walk in the domain of attraction of a standard Brownian motion (here, standard means without drift and with identity covariance matrix). Similarly, the application $L$ above appears so as to take into account the drift and the covariance matrix of the combinatorial model under consideration.

Some key ingredients in Denisov and Wachtel's proof are a coupling of random walk by Brownian motion and then a use of older results in the probabilistic literature on exit times for Brownian motion \cite{De-87,BaSm-97,Va-99} (the eigenvalue $\lambda_1$ already appearing in the study of Brownian motion in cones).

Accordingly, all the complexity of the excursion (one-term) asymptotics \eqref{eq:one-term-asymp} is contained in the principal eigenvalue $\lambda_1$.

\subsection*{Dimension $2$}
Regarding the combinatorial model of walks in the quarter plane, the domain \eqref{eq:def_domain} simply becomes an arc of circle, see Figure~\ref{fig:step_sets_2D} for a few examples. More precisely, if the walk is driftless and has identity covariance matrix, then $L$ is just the identity and \eqref{eq:def_domain} is a quarter of circle. For other walk models, using the expression of the linear transform $L$, the arc has opening $\beta\in(0,\pi)$, which one may express as $\arccos(-r)$, where $r$ is an algebraic number which is easily computed from the model; see \cite{BoRaSa-14} for more details.

As it turns out, the principal eigenvalue (and in fact the whole spectrum) of arcs of circles is known. More precisely, if the cone has opening $\beta$, then $\lambda_1=(\frac{\pi}{\beta})^2$, and more generally the $j$-th eigenvalue is given by $\lambda_j=(j\frac{\pi}{\beta})^2$. Consequently, using \eqref{eq:formula_alpha_DW}, one deduces that the asymptotic exponent $\alpha$ is known and is equal to
\begin{equation*}
   \alpha = -\frac{\pi}{\beta}-1 = -\frac{\pi}{\arccos (-r)}-1.
\end{equation*}
For instance, for the model on the left on Figure \ref{fig:step_sets_2D} (called a scarecrow in \cite{BoRaSa-14}), one has $r=1/4$ and thus $\alpha=-\frac{\pi}{\arccos (-1/4)}-1$, which can be proved to be non-rational \cite{BoRaSa-14}.

Following this approach, the authors of \cite{BoRaSa-14} obtain that for a list of $51$ (unweighted, having infinite group and small steps) models, $\alpha$ is non-rational, and so, using Lemma~\ref{thm:AnChKa}, these $51$ models admit non-D-finite generating functions. In the context of unweighted quadrant lattice walks, it is remarkable that the converse statement is also true: in other words, the generating function \eqref{eq:def_generating_function} of the $74$ non-singular, unweighted quadrant lattice walks is D-finite if and only if the principal exponent is rational.

This equivalence (between D-finiteness of the generating function and rationality of the critical exponent) is a priori not true in general: the authors of \cite{BoBMMe-18} construct several models (one of them is represented on Figure \ref{fig:step_sets_2D}, right) for which $\alpha$ is rational but the generating function is conjectured to be non-D-finite. See Table 2 in \cite{BoBMMe-18} for more examples.

With this in mind, our question in dimension $2$ would be to see whether there exist quadrant walk models such that the associated excursion sequence admits the asymptotics
\eqref{eq:two-term-asymp}, with $\alpha_1,\ldots,\alpha_{p-1}\in \mathbb Q$ and $\alpha_p\notin \mathbb Q$. Such a statement would also lead to non-D-finiteness results, by a generalization of Lemma \ref{thm:AnChKa} to many-term asymptotic expansions. See in particular the works \cite{FiRi-14,FiRi-19}, where this generalization is mentioned.

As we will explain later, we conjecture that the above rationality/non-rationality phenomenon does not occur in dimension~$2$.

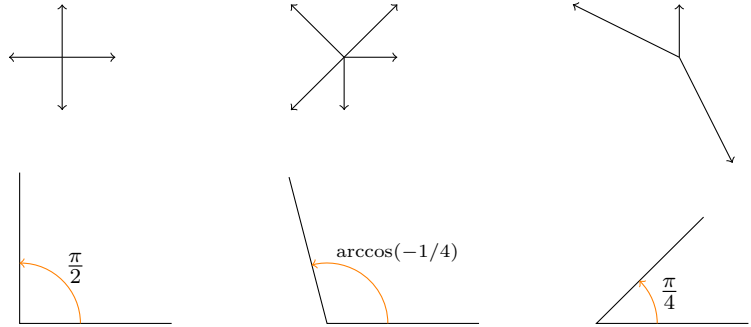
\begin{figure}
\begin{center}
\begin{tikzpicture}[scale=.7] 
    \draw[->,white] (1,2) -- (0,-2);
    \draw[->,white] (1,-2) -- (0,2);
    \draw[->] (0,0) -- (1,0);
    \draw[->] (0,0) -- (-1,0);
    \draw[->] (0,0) -- (0,-1);
    \draw[->] (0,0) -- (0,1);
   \end{tikzpicture}\qquad\qquad\qquad
\begin{tikzpicture}[scale=.7] 
    \draw[->,white] (1,2) -- (0,-2);
    \draw[->,white] (1,-2) -- (0,2);
    \draw[->] (0,0) -- (1,1);
    \draw[->] (0,0) -- (1,0);
    \draw[->] (0,0) -- (0,-1);
    \draw[->] (0,0) -- (-1,1);
    \draw[->] (0,0) -- (-1,-1);
   \end{tikzpicture}\qquad\qquad\qquad
\begin{tikzpicture}[scale=.7] 
  \draw[->,white] (1.5,2) -- (-2,-2);
    \draw[->,white] (1.5,-2) -- (-2,2);
        \draw[->] (0,0) -- (0,1);
    \draw[->] (0,0) -- (-2,1);
    \draw[->] (0,0) -- (1,-2);
   \end{tikzpicture}
  \end{center}
  \begin{center}
  \begin{tikzpicture}
  \draw
    (2,0) coordinate (a) 
    -- (0,0) coordinate (b) 
    -- (0,2) coordinate (c) 
    pic["$\frac{\pi}{2}$", draw=orange, ->, angle eccentricity=1.3, angle radius=0.8cm]
    {angle=a--b--c};
\end{tikzpicture}\qquad\qquad
\begin{tikzpicture}
  \draw
    (2,0) coordinate (a) 
    -- (0,0) coordinate (b) 
    -- (-0.5,1.94) coordinate (c) 
    pic["\quad\tiny{$\arccos(-1/4)$}", draw=orange, ->, angle eccentricity=1.5, angle radius=0.8cm]
    {angle=a--b--c};
\end{tikzpicture}\qquad\qquad
\begin{tikzpicture}
  \draw
    (2,0) coordinate (a) 
    -- (0,0) coordinate (b) 
    -- (1.41,1.41) coordinate (c) 
    pic["$\frac{\pi}{4}$", draw=orange, ->, angle eccentricity=1.3, angle radius=0.8cm]
    {angle=a--b--c};
\end{tikzpicture}
\end{center}
  \caption{Three examples of walks in dimension $2$ confined to the quarter plane $C=\mathbb R_+^2$, with the associated domain $(LC)\cap\mathbb S^{1}$ as in \eqref{eq:def_domain}. Their critical exponent $\alpha$ in \eqref{eq:one-term-asymp} is as follows: on the left, $\alpha=-3$; for the second model $\alpha=-\frac{\pi}{\arccos(-1/4)}-1\notin \mathbb Q$, see \cite{BoRaSa-14}; on the right $\alpha=-5$, see \cite{BoBMMe-18}.}
  \label{fig:step_sets_2D}
\end{figure}

\subsection*{Dimension $3$}
We now explore the case of dimension $3$. First, the domain \eqref{eq:def_domain} to consider is the trace on the sphere $\mathbb S^2$ of $L\mathbb R_+^3$, which by construction is a spherical triangle, see Figure~\ref{fig:tilings} for a few examples. In other words, in dimension $3$, one has to understand the principal eigenvalue $\lambda_1$ of spherical triangles. This connection between three-dimensional positive lattice walks and spherical triangles has been studied in \cite{BoPeRaTr-20}, see also \cite{RaTr-09} in relation with a Brownian pursuit problem.

While in dimension $2$, it was possible to compute the whole spectrum for the Laplace-Beltrami problem with Dirichlet conditions on the domain \eqref{eq:def_domain}, and in addition we had nice formulas for all eigenvalues and eigenfunctions (recall that $\lambda_j=(j\frac{\pi}{\beta})^2$ in the planar case), this is no longer the case in dimension $3$. More precisely, given a generic spherical triangle, it is in general impossible to compute in closed form any of its eigenvalues. To summarize, up to our knowledge, there are only two kinds of exceptional spherical triangles which admit eigenvalues in closed form: \begin{itemize}
   \item Spherical triangles corresponding to tilings of the sphere \cite{Be-93}. Notice that tilings do not all lead to an explicit spectrum: for instance, the one on the right on Figure~\ref{fig:tilings} (called the tetrahedral tiling) cannot be solved in an explicit manner, as it does not admit the right parity). Specifically, the angles of these triangles should take one of the following values: $(\frac{\pi}{2},\frac{\pi}{3},\frac{\pi}{3})$, $(\frac{\pi}{2},\frac{\pi}{3},\frac{\pi}{4})$, $(\frac{\pi}{2},\frac{\pi}{3},\frac{\pi}{5})$ or $(\frac{\pi}{2},\frac{\pi}{2},\frac{\pi}{r})$, with some integer $n\geq 2$.
   \item Arbitrary birectangular triangles, i.e., triangles admitting two right angles $\frac{\pi}{2}$ and one arbitrary angle $\beta\in(0,2\pi)$; see \cite{Wa-74,WaKe-77,SeWeZh-21}. 
\end{itemize}
In this three-dimensional context, our question takes the following form: does there exist an octant walk model such that the associated excursion sequence admits the asymptotics \eqref{eq:two-term-asymp} with $\alpha_1,\ldots,\alpha_{p-1}\in \mathbb Q$ and $\alpha_p\notin \mathbb Q$?

\begin{figure}
\begin{center}
\includegraphics[width=0.25\textwidth]{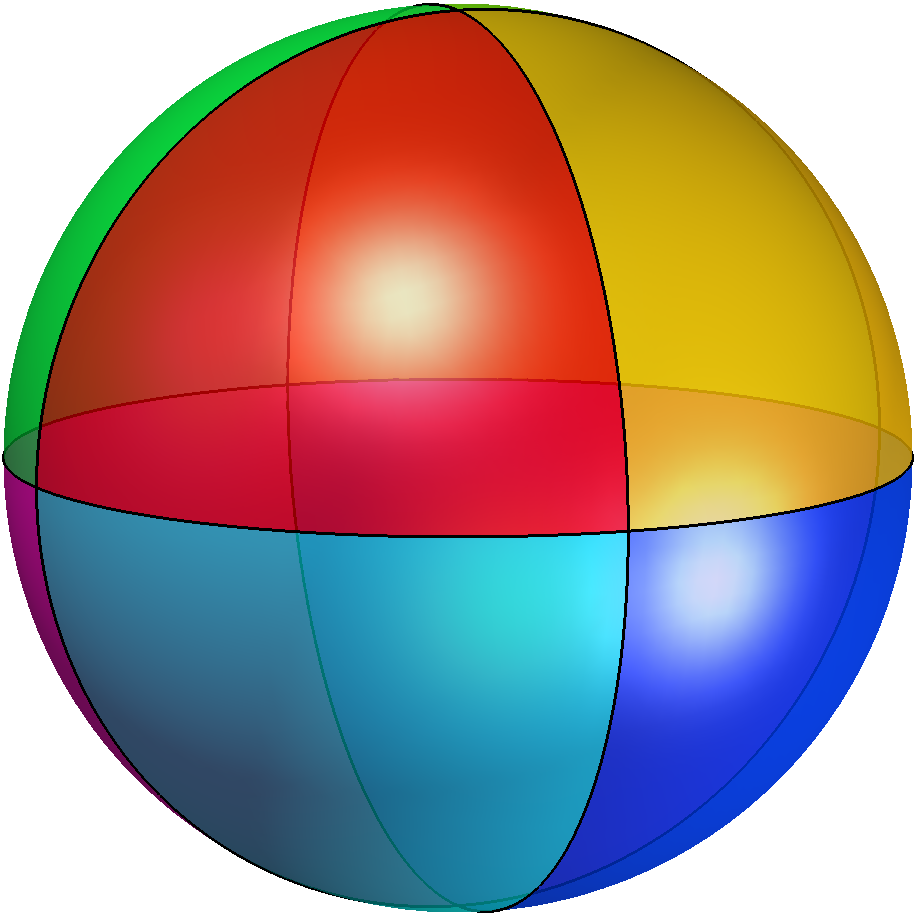}\qquad\qquad\qquad
\includegraphics[width=0.25\textwidth]{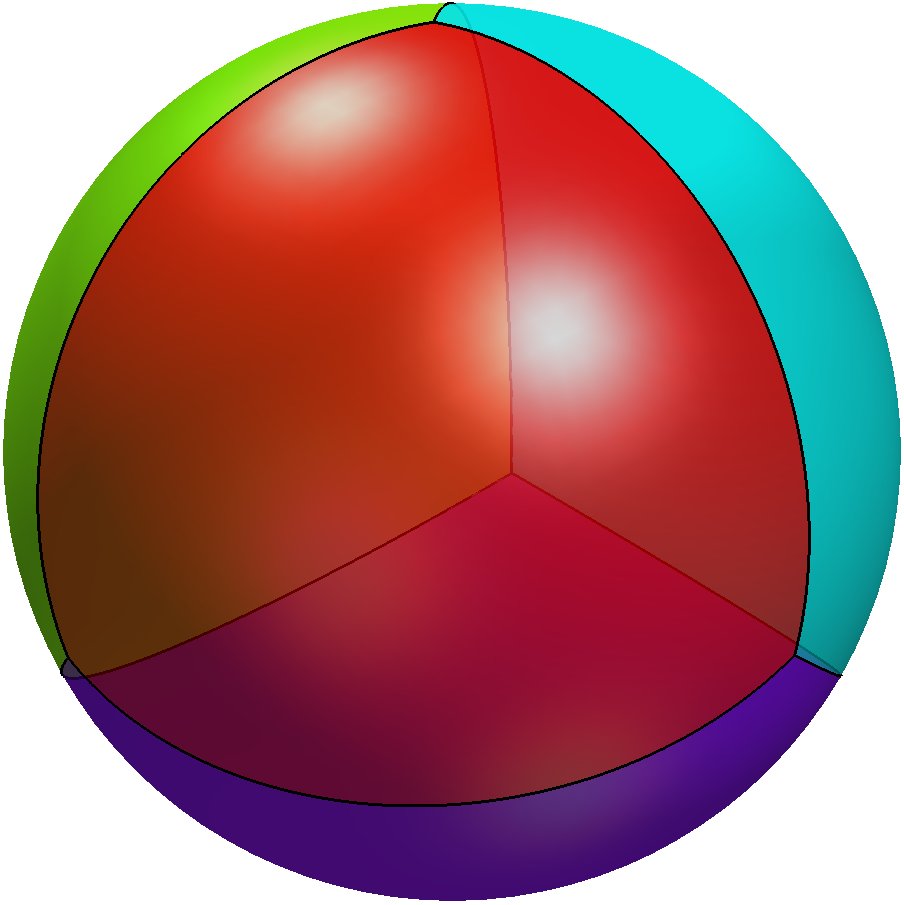}
\caption{Two tilings of the sphere by triangles. The left one corresponds to the simple walk, with jumps $\{(\pm1,0,0),(0,\pm1,0),(0,0,\pm1)\}$. The tiling on the right is associated to the following model, known as 3D Kreweras model: $\{(-1,0,0),(0,-1,0),(0,0,-1),(1,1,1)\}$. The rationality of its critical exponent $\alpha$ is still unknown.}
\label{fig:tilings}
\end{center}
\end{figure}

\subsection*{The heat kernel of cones}
To answer our main question, one intrinsic difficulty is to know a many-term asymptotic expansion of the form of \eqref{eq:two-term-asymp}. And indeed, such asymptotics are not available in the literature in general (except in a few very particular cases, which are the simplest cases, so precisely those with a complete asymptotic expansions with rational exponents, see e.g.\ \cite{BMMi-10}).

As a consequence, in order to progress on our question, we will reason by analogy between the discrete setting (random walk) and the continuous setting (Brownian motion), and we will solve the analogous question in the Brownian framework. 

First of all, the quantity analogous to the number of excursions $e(P,Q;n)$ is called the (continuous) heat kernel of the cone, which, as we shall see, admits an expression in closed-form \eqref{eq:explicit_expression_heat_kernel} and explicit complete asymptotic expansions.

The heat kernel $p^C(x,y;t)$ of a cone (and actually of any domain) $C$ admits the following probabilistic interpretation: it is the probability density function of the transition probability kernel
\begin{equation}
\label{eq:def_heat_kernel}
   p^C(x,y;t) dy =\mathbb P_x(B_t\in dy,\tau_C>t),
\end{equation}
where the Brownian motion is denoted by $B_t$ and $\tau_C$ is the first exit time from the cone $C$, that is, $\tau_C = \inf\{t>0 : B_t\notin C\}$. Letting $0<\lambda_1<\lambda_2\leq \lambda_3\leq \cdots$ denote the eigenvalues of the Laplace-Beltrami operator with Dirichlet conditions on the domain $C\cap\mathbb S^{d-1}$, its explicit expression is given by \cite{BaSm-97}
\begin{equation}
\label{eq:explicit_expression_heat_kernel}
   p^C(x,y;t)=\frac{\exp\left(-\frac{\vert x\vert^2+\vert y\vert^2}{2t}\right)}{t(\vert x\vert \cdot \vert y\vert)^{d/2-1}} \sum_{j=1}^{\infty} I_{\alpha_j}\left(\frac{\vert x\vert \cdot \vert y\vert}{t}\right) m_j\left(\frac{x}{\vert x\vert}\right)m_j\left(\frac{y}{\vert y\vert}\right),
\end{equation}
where $\alpha_j=\sqrt{\lambda_j+(d/2-1)^2}$, $m_j$ is the associated normalized eigenfunction, and $I_\nu$ is the modified Bessel function of order $\nu$, which admits the expression
\begin{equation*}
   I_\nu(x) = \sum_{m=0}^\infty \frac{1}{m!\Gamma(\nu+m+1)} \left(\frac{x}{2}\right)^{\nu +2m}.
\end{equation*}

The following result may be found in \cite[Thm~2.3]{ChFuRa-20}:
\begin{lem}
\label{lem:heat_kernel_asymptotics}
For any dimension $d\geq 1$ and any cone regular enough, the heat kernel $p^C(x,y;t)$ in \eqref{eq:def_heat_kernel} admits a complete asymptotic expansion of the form
\begin{equation}
\label{eq:many-term-asymp_heat-kernel}
   p^C(x,y;t) = K_1\cdot  t^{-\alpha_1} + K_2\cdot  t^{-\alpha_2} +\cdots +  K_p\cdot  t^{-\alpha_p}  +o(t^{-\alpha_p}),
\end{equation}
where
\begin{itemize}
   \item the order $p$ of the expansion is arbitrary large;
   \item the constants $K_i$ depend on $x$ and $y$, i.e., $K_i=K_i(x,y)$;
   \item the exponents $\alpha_i$ are independent of $x$ and $y$, and $\alpha_1<\alpha_2<\cdots<\alpha_p$;
   \item $\alpha_i=\sqrt{\lambda_j+(d/2-1)^2}+k$, where $\lambda_j$ is an eigenvalue and $k$ is a positive integer.
\end{itemize}
\end{lem}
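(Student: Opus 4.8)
The plan is to expand the closed-form expression \eqref{eq:explicit_expression_heat_kernel} as $t\to+\infty$ and to control the tail of the eigenvalue sum. Three mutually independent sources of $1/t$-expansions are visible in \eqref{eq:explicit_expression_heat_kernel}: the global factor $1/t$; the Gaussian prefactor, whose Taylor expansion $\exp(-(|x|^2+|y|^2)/(2t))=\sum_{\ell\geq 0}\frac{(-1)^\ell}{2^\ell\ell!}(|x|^2+|y|^2)^\ell\,t^{-\ell}$ is entire in $1/t$; and each modified Bessel factor, for which the series for $I_\nu$ gives $I_{\alpha_j}(|x||y|/t)=\sum_{m\geq 0}\frac{(|x||y|/2)^{\alpha_j+2m}}{m!\,\Gamma(\alpha_j+m+1)}\,t^{-\alpha_j-2m}$, where $\alpha_j=\sqrt{\lambda_j+(d/2-1)^2}$ is as in \eqref{eq:explicit_expression_heat_kernel} (not to be confused with the exponents $\alpha_i$ of \eqref{eq:many-term-asymp_heat-kernel}). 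Forming the Cauchy product of these expansions, the contribution of the $j$-th summand is a power series in $t^{-1}$ times $t^{-\alpha_j-1}$, so the exponents that occur are precisely the numbers $\sqrt{\lambda_j+(d/2-1)^2}+k$ with $j\geq 1$ and $k\geq 1$ an integer; listing these in increasing order (merging coincidences) as $\alpha_1<\alpha_2<\cdots$, the coefficient $K_i$ of $t^{-\alpha_i}$ is a finite sum of explicit terms indexed by the triples $(j,m,\ell)$ with $\sqrt{\lambda_j+(d/2-1)^2}+2m+\ell+1=\alpha_i$, each involving $m_j(x/|x|)\,m_j(y/|y|)$ together with powers of $|x||y|$ and of $|x|^2+|y|^2$; in particular $K_i$ depends on $x,y$ while $\alpha_i$ does not.

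To turn this into a rigorous statement at a prescribed order $p$, I would split the sum over $j$ at the largest index $J$ with $\alpha_j\leq\alpha_p-1$, equivalently $\lambda_J\leq(\alpha_p-1)^2-(d/2-1)^2$; this $J$ is finite because the Dirichlet spectrum of $C\cap\mathbb S^{d-1}$ is discrete, which is where the regularity hypothesis on the cone is used. For the head $1\leq j\leq J$ one is manipulating finitely many functions of $t$ that are real-analytic at $t=\infty$, so truncating their $1/t$-expansions at the appropriate order produces precisely the asserted finite linear combination of the $t^{-\alpha_i}$ with $\alpha_i\leq\alpha_p$, plus a remainder that is $O(t^{-N})$ for every $N$; by the choice of $J$, the head accounts for all exponents up to $\alpha_p$ and no others.

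The substantive step is the tail $j>J$. Using the elementary bound $I_\nu(z)\leq\frac{(z/2)^\nu}{\Gamma(\nu+1)}I_0(z)\leq\frac{(z/2)^\nu}{\Gamma(\nu+1)}e^{z}$ for $z>0$, $\nu\geq 0$, one obtains, for $t$ larger than some $t_0(x,y)$ chosen so that $|x||y|/(2t)\leq\tfrac12$ and $e^{|x||y|/t}\leq 2$,
\begin{equation*}
   \Bigl|\sum_{j>J}I_{\alpha_j}\!\bigl(\tfrac{|x||y|}{t}\bigr)m_j\!\bigl(\tfrac{x}{|x|}\bigr)m_j\!\bigl(\tfrac{y}{|y|}\bigr)\Bigr|
   \;\leq\; 2\Bigl(\tfrac{|x||y|}{2t}\Bigr)^{\alpha_{J+1}}\sum_{j>J}\frac{2^{-(\alpha_j-\alpha_{J+1})}\,\|m_j\|_\infty^2}{\Gamma(\alpha_j+1)}.
\end{equation*}
The last series converges: by Weyl's law $\lambda_j\to\infty$ and, by standard elliptic/Sobolev estimates on the spherical domain, $\|m_j\|_\infty$ grows at most polynomially in $j$, whereas $\Gamma(\alpha_j+1)$ grows super-polynomially since $\alpha_j\asymp\sqrt{\lambda_j}\to\infty$. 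Multiplying by the remaining prefactor $\frac{1}{t}(|x||y|)^{1-d/2}\exp(-(|x|^2+|y|^2)/(2t))=O(t^{-1})$ and using $\alpha_{J+1}+1>\alpha_p$ (which is exactly the defining property of $J$), the tail is $O(t^{-\alpha_{J+1}-1})=o(t^{-\alpha_p})$; combined with the head expansion this yields \eqref{eq:many-term-asymp_heat-kernel}. The only genuine obstacle is this tail bound — i.e.\ the quantitative control of $\lambda_j$ (Weyl asymptotics) and of $\|m_j\|_\infty$ (eigenfunction estimates) that legitimizes interchanging the infinite spectral sum with the finite asymptotic expansion — while the bookkeeping in the head part is routine.
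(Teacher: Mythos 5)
The paper does not prove Lemma~\ref{lem:heat_kernel_asymptotics}; it quotes it from \cite[Thm~2.3]{ChFuRa-20}, so there is no in-text argument to compare yours against. Your proposal is the natural self-contained derivation from the Ba\~nuelos--Smits representation \eqref{eq:explicit_expression_heat_kernel}, and the structure is sound: expanding the Gaussian and Bessel factors as series in $1/t$ shows each spectral index $j$ contributes exponents $\nu_j+2m+\ell+1$ (with $\nu_j=\sqrt{\lambda_j+(d/2-1)^2}$), hence exactly the values $\nu_j+k$, $k\geq 1$; the head/tail split at the largest $J$ with $\nu_J\leq\alpha_p-1$ is the right threshold; and the tail estimate is correctly reduced to the elementary inequality $I_\nu(z)\leq\frac{(z/2)^\nu}{\Gamma(\nu+1)}e^z$ (valid for $\nu\geq 0$, $z>0$) plus convergence of $\sum_{j>J}2^{-(\nu_j-\nu_{J+1})}\|m_j\|_\infty^2/\Gamma(\nu_j+1)$. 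Two points are worth flagging as the place where the argument genuinely leans on hypotheses. First, the phrase ``regular enough'' in the statement is precisely what licenses both the explicit representation \eqref{eq:explicit_expression_heat_kernel} and the uniform polynomial sup-norm bound $\|m_j\|_\infty\lesssim\lambda_j^{N}$ for $L^2$-normalized Dirichlet eigenfunctions; on a spherical polygon with angles $\leq\pi$ this holds by standard Sobolev embedding in $H^s$ for $s>(d-1)/2$ combined with the elliptic estimate $\|m_j\|_{H^s}\lesssim(1+\lambda_j)^{s/2}$, so your appeal to ``standard elliptic/Sobolev estimates'' is justified but should not be regarded as free in the corner-domain setting — it is exactly the regularity hypothesis at work, as you note. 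Second, you correctly handle the strictness issue: since $J$ is the largest index with $\nu_J\leq\alpha_p-1$, one has $\nu_{J+1}+1>\alpha_p$ strictly, so the tail is $O(t^{-\nu_{J+1}-1})=o(t^{-\alpha_p})$ rather than merely $O(t^{-\alpha_p})$. Modulo the two cited facts (Weyl-type growth of $\lambda_j$ and the polynomial eigenfunction bound), the proof is complete and correct.
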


In this new (and last!) setting, our question may be formulated as follows: is it possible in the asymptotics \eqref{eq:many-term-asymp_heat-kernel} to have first rational exponents $\alpha_1,\ldots ,\alpha_{p-1}$ and a non-rational $\alpha_p$? 

\subsection*{Statements of our main results}
As the following proposition establishes, our question is easily solved in dimension $2$, and the answer happens to be negative.

\begin{prop}
In dimension $d=2$, the exponents $\alpha_i$ appearing in the asymptotics \eqref{eq:many-term-asymp_heat-kernel} of the heat kernel are simultaneously all rational or non-rational.
\end{prop}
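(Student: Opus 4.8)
The plan is to combine the closed-form Dirichlet spectrum of circular arcs with the last item of Lemma~\ref{lem:heat_kernel_asymptotics}, reducing the statement to an elementary arithmetic dichotomy. First I would recall that in dimension $d=2$ the relevant spherical domain $C\cap\mathbb S^1$ is an arc of circle of some opening $\beta$, and that (as noted in the discussion of the two-dimensional case) its Dirichlet eigenvalues are $\lambda_j=(j\pi/\beta)^2$ for $j\geq 1$. Since $d/2-1=0$ when $d=2$, the quantity $\sqrt{\lambda_j+(d/2-1)^2}$ appearing in Lemma~\ref{lem:heat_kernel_asymptotics} simplifies to $\sqrt{\lambda_j}=j\pi/\beta$.

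Next I would invoke the fourth bullet of Lemma~\ref{lem:heat_kernel_asymptotics}: every exponent $\alpha_i$ occurring in the asymptotic expansion \eqref{eq:many-term-asymp_heat-kernel} has the form $\alpha_i=\sqrt{\lambda_j+(d/2-1)^2}+k=j\pi/\beta+k$ for some integer $j\geq 1$ and some non-negative integer $k$. At this point the proof splits into two cases according to whether the single real number $\pi/\beta$ is rational. If $\pi/\beta\in\mathbb Q$, then $j\pi/\beta+k\in\mathbb Q$ for every such $j$ and $k$, so all the $\alpha_i$ are rational. If $\pi/\beta\notin\mathbb Q$, then $j\pi/\beta$ is irrational for every $j\geq 1$ (a nonzero integer multiple of an irrational number is irrational), and adding the integer $k$ preserves irrationality, so all the $\alpha_i$ are irrational. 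In either case the exponents are simultaneously all rational or all non-rational, which is the claim.

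Since the whole argument rests on the closed form $\lambda_j=(j\pi/\beta)^2$, I do not expect any genuine obstacle; the only point requiring (minimal) care is checking that the set of exponents actually appearing in \eqref{eq:many-term-asymp_heat-kernel} is contained in $\{\,j\pi/\beta+k : j\geq 1,\ k\in\mathbb{Z}_{\geq 0}\,\}$, which is exactly what Lemma~\ref{lem:heat_kernel_asymptotics} guarantees. I would close by remarking that this is precisely the structural feature that breaks down in dimension $3$: there $\sqrt{\lambda_j+(d/2-1)^2}=\sqrt{\lambda_j+1/4}$ carries the extra shift by $1/4$ inside the square root, and distinct eigenvalues of a spherical triangle need not be rationally related, which is what leaves room for the mixed rational/non-rational behaviour studied in the rest of the paper.
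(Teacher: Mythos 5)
Your proof is correct and follows the paper's argument exactly: use $\lambda_j=(j\pi/\beta)^2$ together with the last bullet of Lemma~\ref{lem:heat_kernel_asymptotics} to write $\alpha_i=j\pi/\beta+k$, then observe that the rationality of all these numbers is governed by the single quantity $\pi/\beta$. Your write-up simply spells out the two-case dichotomy (and the closing remark about dimension $3$) a bit more explicitly than the paper does.
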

\begin{proof}
Using that $d=2$ and $\lambda_j=(j\frac{\pi}{\beta})^2$, it follows from Lemma~\ref{lem:heat_kernel_asymptotics} that the exponents $\alpha_i$ may be expressed as $j\frac{\pi}{\beta}+k$, where $j$ and $k$ are positive integers. Clearly, when $j$ and $k$ vary, these numbers are either all rational or all non-rational. 
\end{proof}

Accordingly, we also conjecture that we cannot construct any discrete model having this rationality/non-rationality property (with sufficiently many moment conditions). 

Although we shall not elaborate on this here, we would like to mention that, based on the above two-dimensional result, it should be easy to give an example to our rationality/non-rationality phenomenon in dimension $4$, seeing $\mathbb R^4$ as a product of two planes and defining on each plane a different model, one with $\frac{\pi}{\beta_1}\in\mathbb Q$ and the second one with $\frac{\pi}{\beta_2}\in\mathbb Q$. We thank Andrew Elvey Price for this suggestion.

So we have to move to dimension $3$. Our main theorem in this paper is the following:
\begin{thm}
\label{thm:main_formulation_comb}
There exists a 3D cone such that the heat kernel admits the asymptotics \eqref{eq:many-term-asymp_heat-kernel}, with first rational exponents $\alpha_1,\ldots ,\alpha_{p-1}$ and then a non-rational exponent $\alpha_p$.
\end{thm}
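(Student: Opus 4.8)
The plan is to exhibit a one-parameter family of spherical triangles $T_\beta$ whose first eigenvalue $\lambda_1(\beta)$ is known in closed form — so that the corresponding exponent $\alpha_1(\beta)=\sqrt{\lambda_1(\beta)+1/4}$ is rational for a well-chosen $\beta$ — while arranging that the \emph{second} exponent is irrational. The natural candidates are the birectangular triangles: for a triangle with two right angles and third angle $\beta\in(0,2\pi)$ one has $\lambda_1(\beta)$ explicitly (see \cite{Wa-74,WaKe-77,SeWeZh-21}), and one can pick $\beta=\beta_0$ so that $\alpha_1=\sqrt{\lambda_1(\beta_0)+\tfrac14}+k$ lands on a rational value (indeed an integer) for the heat-kernel expansion \eqref{eq:many-term-asymp_heat-kernel}; by Lemma~\ref{lem:heat_kernel_asymptotics} the very first batch of exponents $\alpha_1,\dots,\alpha_{p-1}$ come from $\lambda_1$ together with integer shifts, hence are all rational at $\beta_0$. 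The issue is that at such a symmetric $\beta_0$ the \emph{next} eigenvalue $\lambda_2(\beta_0)$ may also be ``nice''. To break this, I would perturb: deform the birectangular triangle by moving one angle away from $\pi/2$, or equivalently consider a nearby triangle $T_{\beta_0,\varepsilon}$, and track how $\lambda_1$ and $\lambda_2$ move.

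The second — and main — step is the perturbative analysis, for which the key tool is the new Hadamard-type formula announced in the abstract: an explicit expression for $\frac{d}{d\theta}\lambda_j$ under an infinitesimal change of the angles of a spherical triangle, in terms of the boundary behaviour (normal derivative) of the eigenfunction $m_j$. Using this formula I would compute $\lambda_1'(\beta_0)$ and $\lambda_2'(\beta_0)$ (and if necessary higher derivatives). The point is a \emph{transversality/non-degeneracy} argument: if $\lambda_1$ and $\lambda_2$ moved in ``rationally proportional'' ways to all orders, the two exponents would stay simultaneously rational; but the Hadamard formula will show $\alpha_1$ and $\alpha_2$ have genuinely different derivatives (the ratio $\alpha_2'(\beta_0)/\alpha_1'(\beta_0)$ is not the ``right'' constant), so along the deformation one can choose a parameter value $\beta_1$ near $\beta_0$ at which $\alpha_1$ is still rational (using that the explicit birectangular formula keeps $\lambda_1$ under control, or re-centering the family so that $\lambda_1$ remains explicit while $\lambda_2$ is perturbed) but $\alpha_2$ has been pushed to an irrational value. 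Concretely, one wants: (i) a curve in angle-space along which $\alpha_1$ stays constant and rational, which exists because the birectangular eigenvalue $\lambda_1$ depends on a single effective parameter; (ii) $\alpha_2$ non-constant along that curve, which follows from the Hadamard formula once one checks the relevant normal-derivative integrals of $m_1$ and $m_2$ are not in the degenerate ratio; (iii) an irrationality input — either a direct arithmetic argument as in \cite{BoRaSa-14} applied to $\arccos$ of an algebraic number, or, since $\alpha_2$ varies continuously and analytically and cannot be locally constant, the fact that it must take irrational values on any interval.

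Once $\beta_1$ is fixed, one pulls back: take $C$ to be the cone over $T_{\beta_1}$ (i.e.\ the solid angle in $\mathbb R^3$ whose trace on $\mathbb S^2$ is $T_{\beta_1}$), and apply Lemma~\ref{lem:heat_kernel_asymptotics} with $d=3$: the exponents of \eqref{eq:many-term-asymp_heat-kernel} are $\sqrt{\lambda_j(\beta_1)+\tfrac14}+k$ for eigenvalues $\lambda_j$ and integers $k\geq 0$, so the finitely many smallest ones are $\alpha_1+k$ (rational) for $k$ up to some bound, and the first one involving $\lambda_2$, namely $\sqrt{\lambda_2(\beta_1)+\tfrac14}$, is irrational and — by choosing the order $p$ appropriately and checking it is not accidentally equal to some $\alpha_1+k$ — appears as $\alpha_p$ with all earlier $\alpha_i$ rational. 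This yields the claimed asymptotics.

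\medskip

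\noindent\textbf{Main obstacle.} The crux is step two: proving the non-degeneracy of the Hadamard derivatives, i.e.\ that $\lambda_1$ and $\lambda_2$ cannot be kept simultaneously ``commensurable'' along the deformation. This requires (a) establishing the Hadamard formula for eigenvalues of spherical triangles under angle variations — itself the technical heart of the paper — and (b) extracting enough quantitative information about the eigenfunctions $m_1,m_2$ near the vertices/edges to show the derivative ratio is not the forbidden value. Controlling $m_2$ (a non-principal eigenfunction, which changes sign) near the corners of the triangle, where the eigenfunctions have vertex-angle-dependent conical singularities, is where the real work lies; everything else is bookkeeping with Lemma~\ref{lem:heat_kernel_asymptotics} and a classical irrationality argument.
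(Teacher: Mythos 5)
Your proposal follows essentially the same strategy as the paper: work along a curve of triangles on which $\lambda_1$ is held constant (so that $\alpha_1=\sqrt{\lambda_1+1/4}$ stays at a chosen rational value), use the Hadamard variational formula for angle perturbations to show $\lambda_2$ is non-constant along that curve, and conclude by continuity/analyticity that $\alpha_2=\sqrt{\lambda_2+1/4}$ must take irrational values near the base point. The paper's explicit choice of base point is the equirectangular triangle (all angles $\tfrac{\pi}{2}$, with $\lambda_1=12$ giving $\alpha_1=7/2$ and $\lambda_2=\lambda_3=30$ giving $11/2$), and the one wrinkle your sketch does not quite flag is that $\lambda_2$ there has multiplicity two, so the Hadamard derivative is obtained as the eigenvalues of a $2\times 2$ matrix of boundary integrals on the eigenspace, which the paper computes explicitly to be $\pm\frac{22\sqrt{3}}{\pi}\ne 0$.
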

Theorem \ref{thm:main_formulation_comb} is a rather direct consequence of the following result:

\begin{thm}
\label{thm:main_formulation_spectral}
There exists $t_0>0$ and a real analytic function $b$ defined on $(-t_0,t_0)$, such that the one parameter
family of triangles $(T_t)_{t\in (-t_0,t_0)}$ that have one side of length
$\frac{\pi}{2}$ and adjacent angles with values 
\begin{equation*}
   \frac{\pi}{2}+t,\frac{\pi}{2}+b(t),
\end{equation*}
satisfies, for the Dirichlet Laplace operator, 
\begin{itemize}
\item the first eigenvalue $ \lambda_1(t)$ of $T_t$ is constant: $\forall t\in (-t_0,t_0),~\lambda_1(t)=12,$
\item the second eigenvalue $\lambda_2(t)$ admits the first order approximation:
  \begin{equation*}
    \lambda_2(t)\,=\,30-22\sqrt{3}\cdot |t| \,+\,o(t).
  \end{equation*}
\end{itemize} 
\end{thm}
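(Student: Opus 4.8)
The plan is to use a Hadamard-type variational formula for the Dirichlet eigenvalues of a spherical triangle as its angles are perturbed, applied to a well-chosen starting triangle where the spectrum is explicitly computable. The natural base point is the birectangular triangle $T_0$ with one side of length $\frac{\pi}{2}$ and both adjacent angles equal to $\frac{\pi}{2}$; by the results on birectangular triangles cited above (\cite{Wa-74,WaKe-77,SeWeZh-21}), its eigenvalues are known in closed form, and one should be able to identify a birectangular triangle whose first two eigenvalues are exactly $\lambda_1(0)=12$ and $\lambda_2(0)=30$ (the third angle $\beta$ of $T_0$ being pinned down by this requirement; presumably $\beta = \frac{\pi}{3}$, which makes $T_0$ one of the explicitly solvable tiling triangles). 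I would first record the explicit eigenfunctions $m_1, m_2$ of $T_0$ associated to $\lambda_1(0)$ and $\lambda_2(0)$, since the Hadamard formula will be expressed in terms of their boundary behaviour (the normal derivatives along the two perturbed sides).

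Next I would invoke the new Hadamard formula announced in the abstract: it gives $\frac{d}{ds}\lambda_j$ along an infinitesimal change of one angle of a spherical triangle as an explicit boundary integral of $|\partial_n m_j|^2$ against a geometric weight depending on where one is along the relevant side. Denoting by $t$ the increment of the first angle and $b$ the increment of the second, the two eigenvalues vary to first order as
\begin{equation*}
  \lambda_j(t) = \lambda_j(0) + t\cdot A_j + b\cdot B_j + o(|t|+|b|),
\end{equation*}
where $A_j, B_j$ are the corresponding Hadamard coefficients, computed from $m_j$. For the first eigenvalue to remain constant \emph{to all orders} I would impose $\lambda_1(t)\equiv 12$ as an equation defining $b$ implicitly as a function of $t$: since $\lambda_1$ is a simple eigenvalue it depends real-analytically on $(t,b)$, and provided $B_1 = \partial_b\lambda_1(0,0)\neq 0$ the analytic implicit function theorem produces a real-analytic $b(t)$ on some interval $(-t_0,t_0)$ with $b(0)=0$ and $b'(0) = -A_1/B_1$. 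This is where the specific geometry of the birectangular base triangle is used: one must check $B_1\neq0$, which amounts to checking that $m_1$ does not have vanishing normal derivative along the second perturbed side — true because $m_1>0$ in the interior and the side is part of $\partial T_0$.

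Finally I would substitute $b = b(t)$ into the first-order expansion of $\lambda_2$, obtaining
\begin{equation*}
  \lambda_2(t) = 30 + \bigl(A_2 + b'(0)\,B_2\bigr)\,t + o(t) = 30 + \Bigl(A_2 - \tfrac{A_1}{B_1}B_2\Bigr) t + o(t),
\end{equation*}
and the claim $\lambda_2(t) = 30 - 22\sqrt3\,|t| + o(t)$ forces the linear-in-$t$ coefficient to be a nonzero multiple of $|t|$ rather than $t$. The mechanism for this is that the perturbation is by $|t|$ on both sides of the base point in a symmetric way: $T_t$ and $T_{-t}$ should be \emph{isometric} spherical triangles (swapping the roles of the two perturbed angles, together with the symmetry of the birectangular base configuration), so $\lambda_2(t) = \lambda_2(-t)$, which kills the genuine linear term and makes $|t|$ the leading behaviour; one then evaluates $A_2 - \frac{A_1}{B_1}B_2$ from the explicit eigenfunctions and matches it to $-22\sqrt3$ via a residue/boundary-integral computation. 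The main obstacle I anticipate is precisely this last explicit computation: one needs the Hadamard boundary integrals for $m_2$ along both perturbed sides in closed form, which requires a sufficiently concrete description of the (possibly degenerate or multiplicity-related) second eigenfunction of the birectangular triangle and careful bookkeeping of the geometric weights in the Hadamard formula near the triangle's vertices, where the eigenfunctions vanish to fractional order and the boundary integrand could be singular. A secondary subtlety is justifying that $\lambda_2$ stays simple (or controlling the splitting if $\lambda_2(0)$ has multiplicity) along the family, so that the first-order expansion with an $|t|$-term is legitimate; if $\lambda_2(0)$ is a double eigenvalue, the $|t|$ behaviour would instead come from the standard fact that perturbed eigenvalues of a crossing branch separate linearly, and one would compute the slope from the $2\times2$ Hadamard matrix.
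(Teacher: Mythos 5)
Your overall strategy is the one the paper actually uses: pick an explicitly solvable base triangle, use the implicit function theorem on the level set $\{\lambda_1 = 12\}$ to define $b(t)$ analytically (checking $\partial_\beta\lambda_1\neq 0$), and then read off the first-order variation of $\lambda_2$ from a Hadamard formula. Two points need correcting, though, one geometric and one logical.

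\textbf{The base triangle.} You hedge on what $T_0$ is, suggesting the third angle might be $\frac{\pi}{3}$. In fact, if one side has length $\frac{\pi}{2}$ and both adjacent angles are $\frac{\pi}{2}$, spherical trigonometry (the cotangent four-part formula, $\cot L_\beta(\theta)=\cot\beta\sin\theta$, giving $L_{\pi/2}\equiv\frac{\pi}{2}$) forces all three sides and all three angles to equal $\frac{\pi}{2}$: the base triangle is the octant (one-eighth of the sphere), not a general birectangular triangle with a free third angle. This is the ``equirectangle'' triangle $T_*=T(\frac{\pi}{2},\frac{\pi}{2})$ of the paper, whose Dirichlet spectrum is that of spherical harmonics odd under the three coordinate reflections: $\lambda_1=12$ (simple, eigenfunction $\propto xyz$) and $\lambda_2=\lambda_3=30$ with multiplicity two (degree-$5$ harmonics). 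Your first eigenvalue and IFT step are fine; the Hopf-lemma reasoning for $\partial_\beta\lambda_1\neq 0$ at the first eigenvalue works since $(\partial_\beta L)_{\pi/2}(\theta)=\sin\theta>0$, though the paper proves the nonvanishing for \emph{all} triangles via a reflection/unique-continuation argument.

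\textbf{The source of the $|t|$.} You initially attribute the $|t|$-behaviour to the (near-)isometry $T_t\simeq T_{-t}$, claiming the symmetry ``kills the genuine linear term and makes $|t|$ the leading behaviour.'' This is backwards: if $\lambda_2(\cdot)$ were a smooth function of $t$ satisfying $\lambda_2(t)=\lambda_2(-t)$, the conclusion would be $\lambda_2(t)=30+O(t^2)$, with \emph{no} $|t|$ term at all. A $|t|$-term is incompatible with smoothness at $t=0$; it can only arise because $30$ is a \emph{double} eigenvalue of $T_0$, so $\lambda_2(t)$ is the minimum of two analytic eigenbranches that cross transversally at $t=0$ with opposite slopes $\pm c$. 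The symmetry is consistent with this but does not produce it. You do mention this multiplicity scenario at the very end as a ``secondary subtlety,'' but it is not a subtlety to be controlled away — it is the entire mechanism, and the computation you defer (the $2\times2$ matrix of the Hadamard quadratic form restricted to the $\lambda=30$ eigenspace, in an $L^2$-orthonormal basis of explicit degree-$5$ eigenfunctions) is the heart of the proof. The paper carries it out: the matrix is $-\frac{11}{\pi}\bigl(\begin{smallmatrix}3&-\sqrt{3}\\-\sqrt{3}&-3\end{smallmatrix}\bigr)$, with eigenvalues $\pm\frac{22\sqrt{3}}{\pi}$, giving the slopes of the two crossing branches.

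In short: keep the IFT step, but replace the ``isometry kills the linear term'' reasoning by ``the second eigenvalue has multiplicity two, hence $\lambda_2(t)=\min(E_2(t),E_3(t))$ for analytic branches with $\dot E_2(0)=-\dot E_3(0)\neq0$,'' and identify $T_0$ correctly as the octant. Then the remaining work is the explicit boundary-integral computation, which is exactly where the proof spends its effort.
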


\subsection*{Acknowledgments} 
The last author would like to thank Alin Bostan for very interesting discussions related to the rationality of asymptotic exponents and the relation to non-D-finiteness.

\section{The spectrum of spherical triangles}
We prove Theorem \ref{thm:main_formulation_spectral} by studying the first eigenvalues
as functions on the set $\Mcal$ of spherical triangles with one side of length $\frac{\pi}{2}$. We first show that the level sets of the
first eigenvalue $\lambda_1$ are analytic curves in $\Mcal$. Denote by $T_*$ the equirectangle triangle, see Figure~\ref{fig:ex1} (left). Restricting to the curve on which the first eigenvalue is constant and equal to $12=\lambda_1(T_*)$,
we compute the derivatives of the second and third eigenvalue branches at $T_*$.
Since the latter derivatives do not vanish, the theorem will be proved.

This strategy of proof relies heavily on analytic perturbation theory (see \cite{Kato}) and similar techniques which have been
used by the authors of \cite{SeWeZh-21} to study the spectral gap of spherical triangles. The reader new to analytic perturbation theory may also find \cite{ElSoufiIlias} as a useful reference giving a similar application of this theory.

\subsection{The set of spherical triangles and the associated spectral problem}
\label{sec:defns}

Let $A_*$ and $B_*$ be two points at distance $\frac{\pi}{2}$ on the unit sphere in $\R^3$.
We choose one of the two hemispheres that have $A_*$ and $B_*$ on its boundary and denote by $\Mcal$ the set of triangles
whose vertices are $A_*$, $B_*$ and $C$, where $C$ is any point of that hemisphere. For any $T$ in $\Mcal$, we denote by $a$ the length of
the side opposite to $A_*$ (resp.\ $b$ and $c$) any by $\alpha$ the angle at $A_*$ (resp.\ $\beta$ at $B_*$ and $\gamma$ at $C$).
Figure~\ref{fig:ex1} summarizes these notations.
\begin{figure}
\begin{tikzpicture}[line cap=round,line join=round,scale=2,%
                    x={({-\xx cm,-\xy cm})},y={(\xx cm,-\xy cm)},z={(0 cm,\zy cm)}]
\def\atheta{0}
\def\aphi  {89}
\def\btheta{90}
\def\bphi  {89}
\def\ctheta{45}
\def\cphi  {1}
\pgfmathsetmacro\ax{cos(\atheta)*sin(\aphi)}
\pgfmathsetmacro\ay{sin(\atheta)*sin(\aphi)}
\pgfmathsetmacro\az{cos(\aphi)});
\pgfmathsetmacro\bx{cos(\btheta)*sin(\bphi)}
\pgfmathsetmacro\by{sin(\btheta)*sin(\bphi)}
\pgfmathsetmacro\bz{cos(\bphi)});
\pgfmathsetmacro\cx{cos(\ctheta)*sin(\cphi)}
\pgfmathsetmacro\cy{sin(\ctheta)*sin(\cphi)}
\pgfmathsetmacro\cz{cos(\cphi)});
\pgfmathsetmacro\px{crossx(\ax,\ay,\az,\bx,\by,\bz)}
\pgfmathsetmacro\py{crossy(\ax,\ay,\az,\bx,\by,\bz)}
\pgfmathsetmacro\pz{crossz(\ax,\ay,\az,\bx,\by,\bz)}
\pgfmathsetmacro\qx{crossx(\cx,\cy,\cz,\ax,\ay,\az)}
\pgfmathsetmacro\qy{crossy(\cx,\cy,\cz,\ax,\ay,\az)}
\pgfmathsetmacro\qz{crossz(\cx,\cy,\cz,\ax,\ay,\az)}
\pgfmathsetmacro\rx{crossx(\bx,\by,\bz,\cx,\cy,\cz)}
\pgfmathsetmacro\ry{crossy(\bx,\by,\bz,\cx,\cy,\cz)}
\pgfmathsetmacro\rz{crossz(\bx,\by,\bz,\cx,\cy,\cz)}
\greatcircle{\px}{\py}{\pz}{blue}{-1}{1}
\greatcircle{\qx}{\qy}{\qz}{blue}{ 1}{1}
\greatcircle{\rx}{\ry}{\rz}{blue}{-1}{1}
\draw (0,0,0) circle (1 cm);
\draw[gray,dashed] (0,0,0) -- (1,0,0);
\draw[gray,dashed] (0,0,0) -- (0,1,0);
\draw[gray,dashed] (0,0,0) -- (0,0,1);
\draw[gray,-latex] (1,0,0) -- (1.5,0,0) node (X) [left]  {$x$};
\draw[gray,-latex] (0,1,0) -- (0,1.5,0) node (Y) [right] {$y$};
\draw[gray,-latex] (0,0,1) -- (0,0,1.5) node (Z) [above] {$z$};
\fill[blue] (\ax,\ay,\az) circle (0.8pt);
\fill[blue] (\ax,\ay,\az-0.08) node [below right]  {\small$A_*$};
\fill[blue] (\ax,\ay,\az) node [above right]  {\small$\frac{\pi}{2}$};
\fill[blue] (\bx,\by,\bz) circle (0.8pt);
\fill[blue] (\bx,\by,\bz-0.08) node [below left] {\small$B_*$};
\fill[blue] (\bx,\by,\bz) node [above left] {\small$\frac{\pi}{2}$};
\fill[blue] (\cx-0.01,\cy,\cz) circle (0.8pt) node [above]  {\small$C_*$};
\fill[blue] (\cx,\cy,\cz) node [below]  {\small $\frac{\pi}{2}$};
\fill (0,0,-0.69)  node [above]  {\small$c$};
\fill (0,0.68,0.4)  node [above]  {\small$a$};
\fill (0.68,0,0.4)  node [above]  {\small$b$};
\end{tikzpicture}
\begin{tikzpicture}[line cap=round,line join=round,scale=2,%
                    x={({-\xx cm,-\xy cm})},y={(\xx cm,-\xy cm)},z={(0 cm,\zy cm)}]
\def\atheta{0}
\def\aphi  {89}
\def\btheta{90}
\def\bphi  {89}
\def\ctheta{55}
\def\cphi  {60}
\pgfmathsetmacro\ax{cos(\atheta)*sin(\aphi)}
\pgfmathsetmacro\ay{sin(\atheta)*sin(\aphi)}
\pgfmathsetmacro\az{cos(\aphi)});
\pgfmathsetmacro\bx{cos(\btheta)*sin(\bphi)}
\pgfmathsetmacro\by{sin(\btheta)*sin(\bphi)}
\pgfmathsetmacro\bz{cos(\bphi)});
\pgfmathsetmacro\cx{cos(\ctheta)*sin(\cphi)}
\pgfmathsetmacro\cy{sin(\ctheta)*sin(\cphi)}
\pgfmathsetmacro\cz{cos(\cphi)});
\pgfmathsetmacro\px{crossx(\ax,\ay,\az,\bx,\by,\bz)}
\pgfmathsetmacro\py{crossy(\ax,\ay,\az,\bx,\by,\bz)}
\pgfmathsetmacro\pz{crossz(\ax,\ay,\az,\bx,\by,\bz)}
\pgfmathsetmacro\qx{crossx(\cx,\cy,\cz,\ax,\ay,\az)}
\pgfmathsetmacro\qy{crossy(\cx,\cy,\cz,\ax,\ay,\az)}
\pgfmathsetmacro\qz{crossz(\cx,\cy,\cz,\ax,\ay,\az)}
\pgfmathsetmacro\rx{crossx(\bx,\by,\bz,\cx,\cy,\cz)}
\pgfmathsetmacro\ry{crossy(\bx,\by,\bz,\cx,\cy,\cz)}
\pgfmathsetmacro\rz{crossz(\bx,\by,\bz,\cx,\cy,\cz)}
\greatcircle{\px}{\py}{\pz}{blue}{-1}{1}
\greatcircle{\qx}{\qy}{\qz}{blue}{1}{-1}
\greatcircle{\rx}{\ry}{\rz}{blue}{-1}{1}
\draw (0,0,0) circle (1 cm);
\draw[gray,dashed] (0,0,0) -- (1,0,0);
\draw[gray,dashed] (0,0,0) -- (0,1,0);
\draw[gray,dashed] (0,0,0) -- (0,0,1);
\draw[gray,-latex] (1,0,0) -- (1.5,0,0) node (X) [left]  {$x$};
\draw[gray,-latex] (0,1,0) -- (0,1.5,0) node (Y) [right] {$y$};
\draw[gray,-latex] (0,0,1) -- (0,0,1.5) node (Z) [above] {$z$};
\fill[blue] (\ax,\ay,\az) circle (0.8pt) node [below]  {\small$A_*$};
\fill[blue] (\ax-0.08,\ay,\az-0.06) node [right]  {\small$\alpha$};
\fill[blue] (\bx,\by,\bz) circle (0.8pt) node [below] {\small$B_*$};
\fill[blue] (\bx,\by+0.07,\bz) node [above left] {\small$\beta$};
\fill[blue] (0,1.22,0.98) circle (0.8pt) node [above]  {\small$C$};
\fill[blue] (0,1.27,0.98) node [below left]  {\small $\gamma$};
\fill (0,0,-0.69)  node [above]  {\small$c$};
\fill (0,1.08,0.35)  node [above]  {\small$a$};
\fill (-0.2,0.15,-0.1)  node [above]  {\small$b$};
\end{tikzpicture}
\caption{On the left: the equirectangle triangle $T_*=T(\frac{\pi}{2},\frac{\pi}{2})$. On the right: a generic triangle $T(\alpha,\beta)$ in $\mathcal M$.}
\label{fig:ex1}
\end{figure}
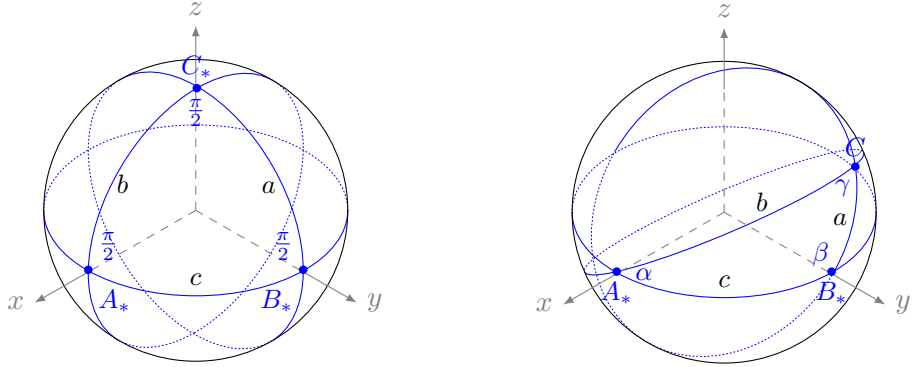

\begin{rem}
  Strictly speaking, to properly define the \textit{set of triangles with one side of length $\pid$} we should
  mod out by the involution $(\alpha,\beta) \leftrightarrow (\beta,\alpha)$. We do not need this subtlety here and
  may freely work on $\Mcal$.
\end{rem}

The set $\Mcal$ is naturally parametrized by $(\alpha,\beta) \in (0,\pi)\times (0,\pi)$ and we will denote by $T(\alpha,\beta)$ the
corresponding triangle. Analyticity on $\Mcal$ means analyticity in $(\alpha,\beta)$.    

We also define the distance between two triangles $T$ and $T'$ by 
\begin{equation*}
   d(T,T')= \max (|\alpha-\alpha'|,|\beta-\beta'|).
\end{equation*}
We let $T_*= T(\frac{\pi}{2},\frac{\pi}{2})$ and $A_*,B_*,C_*$ its vertices.

For any fixed $\beta$, when $\alpha$ goes to $0$, the triangle $T(\alpha,\beta)$ degenerates onto the arc $A_*B_*$, and when $\alpha$
goes to $\pi$, it degenerates onto $\Dcal_\beta$: the digon (or spherical lune) of opening angle $\beta$, see Figure~\ref{fig:ex2} (left).

We will use (spherical) polar coordinates at $A_*$: the point $M(r,\theta)$ is at distance
$r$ along the geodesic that emanates from $A_*$, making the angle $\theta$ with the arc $A_*B_*$; see Figure~\ref{fig:ex2} (right).

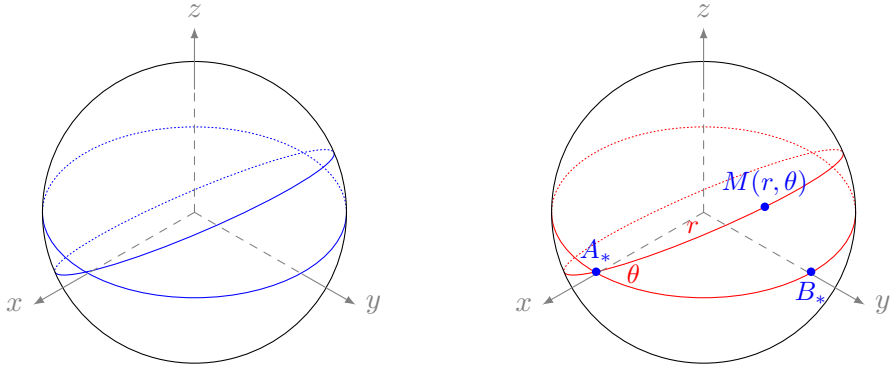
\begin{figure}
\begin{tikzpicture}[line cap=round,line join=round,scale=2,%
                    x={({-\xx cm,-\xy cm})},y={(\xx cm,-\xy cm)},z={(0 cm,\zy cm)}]
\def\atheta{0}
\def\aphi  {89}
\def\btheta{90}
\def\bphi  {89}
\def\ctheta{55}
\def\cphi  {60}
\pgfmathsetmacro\ax{cos(\atheta)*sin(\aphi)}
\pgfmathsetmacro\ay{sin(\atheta)*sin(\aphi)}
\pgfmathsetmacro\az{cos(\aphi)});
\pgfmathsetmacro\bx{cos(\btheta)*sin(\bphi)}
\pgfmathsetmacro\by{sin(\btheta)*sin(\bphi)}
\pgfmathsetmacro\bz{cos(\bphi)});
\pgfmathsetmacro\cx{cos(\ctheta)*sin(\cphi)}
\pgfmathsetmacro\cy{sin(\ctheta)*sin(\cphi)}
\pgfmathsetmacro\cz{cos(\cphi)});
\pgfmathsetmacro\px{crossx(\ax,\ay,\az,\bx,\by,\bz)}
\pgfmathsetmacro\py{crossy(\ax,\ay,\az,\bx,\by,\bz)}
\pgfmathsetmacro\pz{crossz(\ax,\ay,\az,\bx,\by,\bz)}
\pgfmathsetmacro\qx{crossx(\cx,\cy,\cz,\ax,\ay,\az)}
\pgfmathsetmacro\qy{crossy(\cx,\cy,\cz,\ax,\ay,\az)}
\pgfmathsetmacro\qz{crossz(\cx,\cy,\cz,\ax,\ay,\az)}
\pgfmathsetmacro\rx{crossx(\bx,\by,\bz,\cx,\cy,\cz)}
\pgfmathsetmacro\ry{crossy(\bx,\by,\bz,\cx,\cy,\cz)}
\pgfmathsetmacro\rz{crossz(\bx,\by,\bz,\cx,\cy,\cz)}
\greatcircle{\px}{\py}{\pz}{blue}{-1}{1}
\greatcircle{\qx}{\qy}{\qz}{blue}{ 1}{-1}
\draw (0,0,0) circle (1 cm);
\draw[gray,dashed] (0,0,0) -- (1,0,0);
\draw[gray,dashed] (0,0,0) -- (0,1,0);
\draw[gray,dashed] (0,0,0) -- (0,0,1);
\draw[gray,-latex] (1,0,0) -- (1.5,0,0) node (X) [left]  {$x$};
\draw[gray,-latex] (0,1,0) -- (0,1.5,0) node (Y) [right] {$y$};
\draw[gray,-latex] (0,0,1) -- (0,0,1.5) node (Z) [above] {$z$};
\end{tikzpicture}
\begin{tikzpicture}[line cap=round,line join=round,scale=2,%
                    x={({-\xx cm,-\xy cm})},y={(\xx cm,-\xy cm)},z={(0 cm,\zy cm)}]
\def\atheta{0}
\def\aphi  {89}
\def\btheta{90}
\def\bphi  {89}
\def\ctheta{55}
\def\cphi  {60}
\pgfmathsetmacro\ax{cos(\atheta)*sin(\aphi)}
\pgfmathsetmacro\ay{sin(\atheta)*sin(\aphi)}
\pgfmathsetmacro\az{cos(\aphi)});
\pgfmathsetmacro\bx{cos(\btheta)*sin(\bphi)}
\pgfmathsetmacro\by{sin(\btheta)*sin(\bphi)}
\pgfmathsetmacro\bz{cos(\bphi)});
\pgfmathsetmacro\cx{cos(\ctheta)*sin(\cphi)}
\pgfmathsetmacro\cy{sin(\ctheta)*sin(\cphi)}
\pgfmathsetmacro\cz{cos(\cphi)});
\pgfmathsetmacro\px{crossx(\ax,\ay,\az,\bx,\by,\bz)}
\pgfmathsetmacro\py{crossy(\ax,\ay,\az,\bx,\by,\bz)}
\pgfmathsetmacro\pz{crossz(\ax,\ay,\az,\bx,\by,\bz)}
\pgfmathsetmacro\qx{crossx(\cx,\cy,\cz,\ax,\ay,\az)}
\pgfmathsetmacro\qy{crossy(\cx,\cy,\cz,\ax,\ay,\az)}
\pgfmathsetmacro\qz{crossz(\cx,\cy,\cz,\ax,\ay,\az)}
\pgfmathsetmacro\rx{crossx(\bx,\by,\bz,\cx,\cy,\cz)}
\pgfmathsetmacro\ry{crossy(\bx,\by,\bz,\cx,\cy,\cz)}
\pgfmathsetmacro\rz{crossz(\bx,\by,\bz,\cx,\cy,\cz)}
\greatcircle{\px}{\py}{\pz}{red}{-1}{1}
\greatcircle{\qx}{\qy}{\qz}{red}{ 1}{-1}
\draw (0,0,0) circle (1 cm);
\draw[gray,dashed] (0,0,0) -- (1,0,0);
\draw[gray,dashed] (0,0,0) -- (0,1,0);
\draw[gray,dashed] (0,0,0) -- (0,0,1);
\draw[gray,-latex] (1,0,0) -- (1.5,0,0) node (X) [left]  {$x$};
\draw[gray,-latex] (0,1,0) -- (0,1.5,0) node (Y) [right] {$y$};
\draw[gray,-latex] (0,0,1) -- (0,0,1.5) node (Z) [above] {$z$};
\fill[blue] (\ax,\ay,\az) circle (0.8pt) node [above]  {\small$A_*$};
\fill[red] (0.81,0,-0.09) node [right]  {\small$\theta$};
\fill[blue] (\bx,\by,\bz) circle (0.8pt) node [below] {\small$B_*$};
\fill[blue] (-0.17,0.40,0.16) circle (0.8pt) node [above]  {\small$M(r,\theta)$};
\fill[red] (0.5,0.40,0.2) node [above]  {\small$r$};
\end{tikzpicture}
\caption{On the left: a digon (or spherical lune) is a domain bounded by two great circles. On the right: the spherical coordinates $(r,\theta)$}
\label{fig:ex2}
\end{figure}

The side $[B_*,C]$ is parametrized, in these polar coordinates, by the mapping
$\theta \mapsto L_{\beta}(\theta)$ that is implicitly defined by the following application of the cotangent four-part formula:
(that is simplified using that the distance between $A_*$ and $B_*$ is
$\frac{\pi}{2}$)
\begin{equation*}
 0\,=\,\cot L_{\beta}(\theta) -\cot \beta \sin \theta. 
\end{equation*}
This equation can be solved by setting
\begin{equation*}
\forall \beta,\theta \in (0,\pi),~~L_{\beta}(\theta)\,\,=\,\arccot\left( \cot \beta \sin \theta\right),
\end{equation*}
with $\arccot$ the reciprocal function to $\cot$ with values in $(0,\pi)$.
The mapping $(\beta,\theta) \mapsto L_\beta(\theta)$ is thus analytic on $(0,\pi)^2$ and, for any $\beta \in (0,\pi)$,
the mapping $\theta \mapsto L_\beta(\theta)$ extends smoothly to $\R$.

Thus we have the parametrization:
\begin{equation}
\label{paramT}
    T(\alpha,\beta)\,=\,\big\{ (r,\theta):~~ \theta \in (0,\alpha),~r\in(0,L_{\beta}(\theta)) \big \}.
\end{equation}

In these polar coordinates, the spherical metric reads $g\,=\, dr^2\,+\, \sin^2(r)d\theta^2$,
the area element is $\sin r dr d\theta$ and the Dirichlet energy quadratic form for the triangle $T=T(\alpha,\beta)$ is,
for any $u\in \ C_0^\infty(T)$,
\begin{equation}
\label{eq:Dirichlet_form}
  q(u)\,=\,\int_{T} \Big [ \big | \partial_r u(r,\theta) \big |^2 \,+
  \,\frac{1}{\sin ^2r}\big | \partial_\theta u(r,\theta) \big |^2 \Big ]\,\sin rdrd\theta.    
\end{equation}
We also denote by $n$ the Riemannian $L^2$ norm on $T$:
\begin{equation}
\label{eq:Riemannian_norm}
  n(u)\,=\,\int_T \big | u(r,\theta)\big |^2 \, \sin r drd\theta.
\end{equation}
We will abuse notation by also using $q$ and $n$ to denote the bilinear forms that are canonically associated with $q$ and $n$.

We now explain how to associate a self-adjoint operator (that we call the Dirichlet Laplace operator) to this setting. 
The procedure is quite standard and we refer the reader to \cite{Reed-Simon} for more details.
It is well-known that when $Q$ is a bounded quadratic form on a Hilbert space $\Hcal$ with scalar product $n$, 
there exists a unique associated self-adjoint operator that satisfies 
\[
\forall x\in \Hcal, Q(x)\,=\,n(Ax,x).
\] 
The latter statement can be extended to closed unbounded quadratic forms. However, with the definitions above and since the quadratic form $q$ 
is defined on $C_0^\infty(T)$ only, it is not closed. In order to prove that the quadratic form $q$ is closable, we remark that, 
using integration by parts, there exists a partial differential operator $P$ such that 
\[
\forall u\in C_0^\infty(T),~~q(u)\,=\,n(Pu,u).
\]
Moreover, the operator $P$, with domain $C_0^\infty(T)$ is formally symmetric so that we can use the Friedrichs extension 
procedure. As a result, the Dirichlet Laplace operator is obtained as follows.
We first define $H^1_0(T)$ to be the completion of $C_0^\infty(T)$ with respect to the quadratic form $q+n$.
The quadratic form $q$ with domain $H^1_0$ is now closed and the unique associated self-adjoint operator
is the Dirichlet Laplace operator on $T$. We denote it by $\Delta$ (observe that, by construction, $\Delta$ is a non-negative
operator). Despite the corners, the injection from $H^1_0(T)$ into $L^2(T)$
is still compact so that the spectrum of $\Delta$ consists solely of eigenvalues of finite multiplicity.
The construction implies that a function $u$ is an eigenfunction of $\Delta$ with eigenvalue $\lambda$ if and only if the
following system is satisfied:
\begin{equation}\label{eq:eigen_pb}
  \left \{
    \begin{array}{l}
      u \in H^1_0(T),\\
      \forall v \in H^1_0(T) ,~~q(u,v)\,=\,\lambda n(u,v).
    \end{array}
  \right .
\end{equation}

\begin{rem}
  Elaborating on the results of appendix \ref{app:reg}, it can be proved that the eigenfunctions of the latter eigenvalue problem
  do vanish on the sides of the triangles, hence justifying the ``Dirichlet'' appellation.
\end{rem}
\subsection{Analyticity of the spectrum}
For each triangle in $\Mcal$, the spectral problem \eqref{eq:eigen_pb} gives a spectrum that is usually organized in
a non-decreasing sequence:
\begin{equation*}
   \lambda_1(T)\,<\, \lambda_2(T)\,\leq \, \cdots \leq \, \lambda_n(T)\,\leq \, \cdots
\end{equation*}
Each eigenvalue is repeated according to its multiplicity, and we have used the known fact that the first eigenvalue $\lambda_1(T)$ is simple.

The theory of analytic perturbations gives conditions under which the spectrum of a family of
such spectral problems depends analytically on its parameters. We refer to \cite{Kato} for a complete account on the theory and
we now wish to apply the theory when the parameters $(\alpha,\beta)$ vary. 

Let $T_0= T(\alpha_0,\beta_0)$ be a triangle in $\Mcal$, and let $T=T(\alpha,\beta)$ be another triangle in a small neighbourhood of $T_0$.
We recall that $L_{\beta_0}$ and $L_\beta$ are the functions that are used to describe $T_0$ and $T$ in polar coordinates, see \eqref{paramT}.

Analytic perturbation theory applies to a family of quadratic forms on a fixed Hilbert space.
It cannot be used directly here since the spectral problems associated with $T$ and $T_0$ are not defined in the
same Hilbert space, and the corresponding quadratic forms do not have the same domain. In order to circumvent this problem,
we first define a diffeomorphism between $T_0$ and $T$. We want this diffeomorphism to depend analytically on
$(\alpha, \beta)$, but it is actually not necessary to define very precisely what the latter means: analyticity will be checked on the expression
of the quadratic forms in the end.

In order to get Hadamard variational formulas (which we will obtain in Theorems~\ref{thm:Hadamard_1} and \ref{thm:Hadamard_2}), it is convenient to choose our diffeomorphisms as follows.
We choose $\chi$ to be a smooth non-negative and non-increasing function on $\R$ such that $\chi$ is identically $1$ on $(-\infty,\frac{1}{3})$
and identically $0$ on $(\frac{2}{3},\infty),$ and we fix some $\ep>0$. Let $\Phi$ be the mapping defined on $T_0$ by
\begin{equation*}
   \Phi(r,\theta)\,=\,(R, \Theta),
\end{equation*}
with
\begin{equation*}
\left\{\begin{array}{lcl}
  \Theta(r,\theta)&=&\theta\,+\,(\alpha-\alpha_0)\chi\bigl(\frac{\alpha_0-\theta}{\ep}\bigr),\\
  R(r,\theta)&=&r\,+\,(L_\beta\circ\Theta(r,\theta)-L_{\beta_0}(\theta))\chi\bigl(\frac{L_{\beta_0}(\theta)-r}{\ep}\bigr).
  \end{array}\right.
\end{equation*}
This mapping actually depends on $\alpha$, $\beta$, and $\ep$, i.e., $\Phi=\Phi_{\alpha,\beta}^{(\ep)}$, but for readability, the notation does not reflect it.
We also set $\ell_\beta = L_\beta\circ\Theta$ and $\ell_0= L_{\beta_0}$ and observe that these functions depend only on $\theta$.

We now pull back the spherical metric on $T$ to $T_0$ using this diffeomorphism. We thus introduce the Jacobian matrix of
$\Phi$:
\begin{equation*}
  \Jac \Phi_{|(r,\theta)} \,=\,
  \begin{pmatrix}
    A(r,\theta) & C(r,\theta) \\
    0 & B(r,\theta)
  \end{pmatrix},
\end{equation*}
where we have set:
\begin{equation*}
\left\{\begin{array}{ccccl}
  A(r,\theta)&=&\, \partial_rR(r,\theta)&=&\,1-(\ell_{\beta}(\theta)-\ell_0(\theta))\frac{1}{\ep}\chi'\bigl(\frac{\ell_0(\theta)-r}{\ep}\bigr),\\
  \\
  B(r,\theta)&=&\,\partial_\theta \Theta(r,\theta)&=&\,1-(\alpha-\alpha_0)\frac{1}{\ep}\chi'\bigl(\frac{\alpha_0-\theta}{\ep}\bigr),\\
  \\
  C(r,\theta)&=&\,\partial_{\theta} R(r,\theta)&=&\,(\ell_\beta'(\theta)-\ell_0'(\theta))\chi\bigl(\frac{\ell_0(\theta)-r}{\ep}\bigr)\,+\,(\ell_\beta(\theta)-\ell_0(\theta))\frac{\ell_0'(\theta)}{\eps}\chi'\bigl(\frac{\ell_0(\theta)-r)}{\ep}\bigr).
\end{array}\right.
\end{equation*}
Observe that $\Theta$ does not depend on $r$, so that $\partial_r \Theta(r,\theta)=0$. From these expressions, we derive the following lemma.

\begin{lem}
  For any $\ep>0$ there exists $\rho_\ep$ such that $\Phi$ is a smooth diffeomorphism from $T_0$ onto $T$ as soon as
  $d(T,T_0)<\,\rho_\ep$.
\end{lem}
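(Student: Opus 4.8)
The plan is to split the statement into two independent checks — that $\Phi$ is everywhere a local diffeomorphism (equivalently, that $\Jac\Phi$ is invertible), and that $\Phi$ is a bijection from $T_0$ onto $T$ — and then invoke the inverse function theorem to conclude that $\Phi$ is a smooth diffeomorphism. The parameter $\rho_\ep$ will be produced during the first check and will have size of order $\ep/\|\chi'\|_\infty$ up to a constant depending on $(\alpha_0,\beta_0)$.

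\emph{Invertibility of the Jacobian.} Since $\Jac\Phi_{|(r,\theta)}$ is upper triangular, $\det\Jac\Phi_{|(r,\theta)}=A(r,\theta)\,B(r,\theta)$, so it suffices to keep $A$ and $B$ bounded away from $0$. As $\chi$ is non-increasing, $\chi'\le 0$, hence $B=1+(\alpha-\alpha_0)\tfrac1\ep|\chi'(\tfrac{\alpha_0-\theta}{\ep})|$ and $A=1+(\ell_\beta(\theta)-\ell_0(\theta))\tfrac1\ep|\chi'(\tfrac{\ell_0(\theta)-r}{\ep})|$. Here $\chi'$ is fixed and bounded; $\Theta$ depends only on $\theta$ with $\sup_\theta|\Theta(\theta)-\theta|\le|\alpha-\alpha_0|$; and $(\beta,\theta)\mapsto L_\beta(\theta)=\arccot(\cot\beta\sin\theta)$ is real-analytic on $(0,\pi)\times\R$, so $L_\beta$ and $L_\beta'$ are uniformly bounded and uniformly close to $L_{\beta_0}$, $L_{\beta_0}'$ on a fixed compact neighbourhood of $[0,\alpha_0]$ once $\beta$ is near $\beta_0$. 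This gives $\|\ell_\beta-\ell_0\|_\infty=\|L_\beta\circ\Theta-L_{\beta_0}\|_\infty\le\|L_\beta-L_{\beta_0}\|_\infty+\|L_{\beta_0}'\|_\infty|\alpha-\alpha_0|\le C\,d(T,T_0)$ for some $C=C(\alpha_0,\beta_0)\ge 1$. Choosing $\rho_\ep>0$ so small that $C\rho_\ep\|\chi'\|_\infty<\ep$ forces $A(r,\theta),B(r,\theta)\in(0,2)$ for all $(r,\theta)\in T_0$ whenever $d(T,T_0)<\rho_\ep$; in particular $\Phi$ is then everywhere a local diffeomorphism.

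\emph{Bijectivity.} Here I would use the fibered form $\Phi(r,\theta)=(R(r,\theta),\Theta(\theta))$. For $d(T,T_0)<\rho_\ep$ the map $\theta\mapsto\Theta(\theta)$ is smooth with derivative $B>0$, hence strictly increasing; since $\chi(\tfrac{\alpha_0-\theta}{\ep})$ equals $1$ for $\theta$ near $\alpha_0$ and $0$ for $\theta$ near $0$, it satisfies $\Theta(0^+)=0$ and $\Theta(\alpha_0^-)=\alpha$, so it is a diffeomorphism from $(0,\alpha_0)$ onto $(0,\alpha)$. Likewise, for each fixed $\theta$, $r\mapsto R(r,\theta)$ is smooth with derivative $A>0$, equals $r$ for $r$ near $0$ and equals $L_\beta(\Theta(\theta))$ at $r=L_{\beta_0}(\theta)$, hence is a diffeomorphism from $(0,L_{\beta_0}(\theta))$ onto $(0,L_\beta(\Theta(\theta)))$. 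In view of the parametrization \eqref{paramT} of $T_0$ and $T$, these two statements say exactly that $\Phi(T_0)\subset T$ and that every $(R_1,\Theta_1)\in T$ is attained exactly once, namely at the unique $\theta\in(0,\alpha_0)$ with $\Theta(\theta)=\Theta_1$ and then the unique $r\in(0,L_{\beta_0}(\theta))$ with $R(r,\theta)=R_1$. Thus $\Phi$ is a smooth bijection from $T_0$ onto $T$; combined with the non-vanishing of $\det\Jac\Phi$, the inverse function theorem shows $\Phi^{-1}$ is smooth, so $\Phi$ is a smooth diffeomorphism.

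\emph{Main obstacle.} Everything above is elementary once one has the uniform control $\|\ell_\beta-\ell_0\|_\infty\le C\,d(T,T_0)$, and this is the one point that needs care: near the endpoints $\theta\in\{0,\alpha_0\}$ the quantity $\Theta(\theta)$ can leave $[0,\alpha_0]$ by an amount $O(d(T,T_0))$, so one genuinely uses that $L_\beta$ extends smoothly past $[0,\alpha_0]$ — with derivatives bounded uniformly in $\beta$ near $\beta_0$ — both to make sense of $L_\beta(\Theta(\theta))$ and to obtain the Lipschitz-type bound on the relevant compact $\theta$-interval. The rest is bookkeeping with the fixed cutoff $\chi$.
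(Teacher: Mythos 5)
Your proof is correct and follows essentially the same route as the paper: bound $A$ and $B$ away from $0$ for $d(T,T_0)$ small, exploit the fibered structure $\Phi(r,\theta)=(R(r,\theta),\Theta(\theta))$ to get a smooth bijection onto $T$, and conclude by the inverse function theorem. You are, however, a bit more careful on two points. First, you note that $\det\Jac\Phi=AB$, so once $A,B>0$ the Jacobian is automatically invertible; the paper's closing sentence (``restricting $\rho$ again if needed, we can ensure the Jacobian matrix to be always invertible'') is therefore redundant, since the matrix is upper triangular. Second, you flag explicitly that the Lipschitz bound $\|\ell_\beta-\ell_0\|_\infty\le C\,d(T,T_0)$ genuinely uses the smooth extension of $\theta\mapsto L_\beta(\theta)$ past $[0,\alpha_0]$ (since $\Theta$ can push $\theta$ slightly outside that interval), a fact the paper records just before the lemma but does not recall in the proof. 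One small caveat your argument (and the paper's) leaves implicit: the identity $\Theta(0^+)=0$ requires $\chi(\alpha_0/\ep)=0$, i.e.\ $\alpha_0/\ep>2/3$, so ``for any $\ep>0$'' should really be read as ``for any sufficiently small $\ep>0$''; this is harmless for the intended use, where $\ep\to 0$.
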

\begin{proof}
  We first choose $\alpha$ close enough to $\alpha_0$ so that $B$ is uniformly bounded below by some positive (small) constant.
  It follows that $\Theta$ is a smooth diffeomorphism from $[0,\alpha_0]$ onto $[0,\alpha]$ and that $|\Theta(\theta)-\theta|=O(|\alpha-\alpha_0|)$
  uniformly. By definition $(\beta,\theta)\mapsto L_\beta(\theta)$ is smooth so that, if $\rho$ is small enough, then
  $A$ is also bounded below by some positive constant. It follows that $\Phi$ is a smooth bijective mapping from $T_0$ onto $T$.
  Restricting $\rho$ again if needed, we can ensure the Jacobian matrix to be always invertible and this proves the claim.
\end{proof}

Using $\Phi$, $T$ is then parametrized by $T_0$.
The pulled-back metric $\Phi^*(dr^2+\sin^2 rd\theta^2)$ is now represented by the matrix $\Gbb$ defined by
\begin{equation*}
  \Gbb(r,\theta)\,=\,{}^t\Jac \Phi_{|(r,\theta)}
  \begin{pmatrix}
    1 & 0 \\
    0 & \sin^2R(r,\theta) \\
  \end{pmatrix}
  \Jac \Phi_{|(r,\theta)}. 
\end{equation*}
It is convenient to set $D(r,\theta)= \sin R(r,\theta)$ and to define the (Euclidean) gradient 
\begin{equation*}
   \nabla u\,=\,\begin{pmatrix} \partial_ru \\ \partial_\theta u \end{pmatrix}.
\end{equation*}   
With these notations, the Dirichlet quadratic
form \eqref{eq:Dirichlet_form} now reads
\begin{equation}\label{def:qt}
  \begin{array}{rcl}
  q(u\,;\,\alpha, \beta) &=&\displaystyle \int_{T_0} {}^t\nabla u(r,\theta) \Gbb^{-1}(r,\theta) \nabla u(r,\theta)
  \,ABD(r,\theta) dr d\theta\smallskip\\
  &=& \dis \int_{T_0} \left[ (C^2+D^2B^2)(\partial_r u)^2-2AC \partial_ru\partial_\theta u +A^2 (\partial_\theta u)^2\right ]
  \frac{drd\theta}{ABD}
  \end{array}
\end{equation}
and the $L^2$ scalar product \eqref{eq:Riemannian_norm} reads
\begin{equation}\label{def:nt}
  n(u\,;\, \alpha,\beta)\,=\,\int_{T_0} u^2 ABD \, drd\theta.
\end{equation}

Using the definitions, we first observe that the quadratic forms $q( \cdot \,;\, \alpha, \beta)$ are uniformly equivalent for $(\alpha,\beta)$ in a small neighbourhood of $(\alpha_0,\beta_0)$, and similarly for $n(\cdot \,;\,\alpha,\,\beta)$. The completion procedure 
that is used to define the Friedrichs extension thus yields a domain that does not depend
on $(\alpha,\beta)$ and thus coincides with $H^1_0(T_0)$.

Moreover, for any fixed $u\in H^1_0(T_0)$, the functions $(\alpha,\beta)\mapsto q(u\,;\,\alpha, \beta)$ and
$(\alpha,\beta)\mapsto n(u\,;\,\alpha,\beta)$ are analytic for $(\alpha,\beta)$ close to $(\alpha_0,\beta_0)$.
It follows that analytic perturbation theory applies and yields the following properties:
\begin{itemize}
\item If $\lambda_0$ is a simple eigenvalue of $T_0$, then there exists $\delta>0$ and a neighbourhood of $T_0$, such that,
  in this neighbourhood, there is a unique eigenvalue of $T$ in $(\lambda_0-\delta, \lambda_0+\delta)$ and this eigenvalue depends
  analytically on $(\alpha,\beta)$.
\item For any (real-)analytic curve $t\mapsto (\alpha(t),\beta(t))$ on some interval $I$, there exists a collection
  $\bigl( t\mapsto E_i(t)\bigr)_{i\geq 1}$ of real-analytic functions that exhaust the spectrum of $T_t= T(\alpha(t),\beta(t))$.
  Such a function is called an analytic eigenvalue branch
  and there also exist corresponding analytic eigenfunction branches $t\mapsto u_i(t)$.
\item The derivatives of the eigenbranches are given by the Feynman-Hellmann formula (see \cite{Kato} or \cite{HJTAMS} prop. 4.6 for a proof in a similar setting):
  for an analytic eigenbranch $t\mapsto (E(t),u(t))$,
  we have 
  \begin{equation}
  \label{eq:Feynman-Hellmann}
    \forall t\in I,~~\dot{E}(t)\|u(t)\|^2\,=\,\left[\dot{q}_t-E(t)\dot{n}_t\right]\left(u(t)\right),
  \end{equation}
  in which the dot denotes the derivative with respect to $t$. This formula is obtained by differentiating \eqref{eq:eigen_pb};
 specifically, we first differentiate $q_t(v)$ and $n_t(v)$ with a fixed $v$ and then evaluate $v=u$.
\end{itemize}

If $\lambda_0$ is an eigenvalue of $T_0$ of multiplicity $m$, it follows by standard min-max arguments that, for $\delta$ small enough,
there exist exactly $m$ eigenvalues of $T$ in $(\lambda_0-\delta,\lambda_0+\delta)$ in a small neighbourhood of $T_0$.
In a nutshell, analytic perturbation theory says that, along
any curve that is real-analytic, it is possible to label these $m$ eigenvalues so as to have analytic functions. There are, however, two problems remaining.
First, the labeling does not preserve the order of eigenvalues: analytic eigenbranches will typically cross at $T_0$. Then, it
is usually not possible to define eigenbranches that would be analytic for $(\alpha,\beta)$ in a neighbourhood: the labeling depends on
the analytic curve that is chosen and cannot be done consistently in all directions. Of course both problems only arise for multiple eigenvalues.

We have expressed the derivatives of the eigenvalue branch using the corresponding eigenfunction branch. For the reasons given in the preceding paragraph, it is convenient to give a way to
recover the derivatives without knowing a priori the eigenfunction branch. This is obtained by the following procedure.

Let $\lambda_0$ be an eigenvalue of $T_0$ and $\Ecal_0$ the corresponding eigenspace.
The derivatives of all the eigenbranches that coincide with $\lambda_0$ at $t=0$ are exactly the eigenvalues
of the quadratic form $\dot{q}-\lambda_0\dot{n}$, restricted to $\Ecal_0$ and relative to the scalar product $n$.
Observe that using \eqref{paramT} we can write
\begin{equation}
\label{eq:der}
  \dot{q}-\lambda_0\dot{n}\,=\,\dot{\alpha}\bigl( \partial_\alpha q-\lambda_0 \partial_\alpha n\bigr )
  \,+\,\dot{\beta}\bigl( \partial_\beta q-\lambda_0 \partial_\beta n\bigr),
\end{equation}
so that, although we may not have differentiability of the eigenvalues, still, it is enough to know the
partial derivatives $\partial_\alpha q-\lambda_0 \partial_\alpha n$ and $\partial_\beta q-\lambda_0 \partial_\beta n$
to compute the derivatives of the eigenbranches in any direction.

\subsection{A Hadamard variational formula}
The formulas in the preceding section express the derivative of the eigenbranches using integrals over
the whole domain $T_0$, of some quadratic expressions in $u,\,\partial_r u,\,\partial_\theta u$, see \eqref{eq:Feynman-Hellmann} and \eqref{eq:der}.  
Hadamard variational formulas use integrals only on the boundary of the domain, see Theorems~\ref{thm:Hadamard_1} and \ref{thm:Hadamard_2} below. Since the latter are of independent
interest and give
slightly simpler computations in the end, we explain here how to derive them. This derivation is made possible by computing $\dot{q}-\lambda_0\dot{n}$ for fixed $\ep$ and then letting
our parameter $\ep$ go to $0$. More precisely, for any $\ep$, we define the two quadratic forms (see \eqref{eq:der}) 
\begin{equation*}
  D_\alp^\ep=\partial_\alpha q-\lambda_0 \partial_\alpha n~\text{and}~D_\beta^\ep=\partial_\beta q-\lambda_0 \partial_\beta n,
\end{equation*}
that are obtained from \eqref{def:qt} and \eqref{def:nt},
where recall that the dependence on $\ep$ comes from the diffeomorphism $\Phi_{\alpha,\beta}^{(\ep)}$.

\begin{prop}
\label{prop:first_limits}
  Let $\lambda_0$ be an eigenvalue of $T_0$ and $\Ecal_0$ the corresponding eigenspace. For any $u\in \Ecal_0$,
  \begin{equation*}
    \begin{split}
      \lim_{\ep \rightarrow 0} D_\alp^\ep(u)&=\, -\int_0^{\ell(\alpha)} \frac{|\partial_\theta u(r,\alpha)|^2}{\sin r} \, dr,\\
      \lim_{\ep \rightarrow 0} D_\beta^\ep(u)&=-\int_0^{\alpha_0} \left [ |\partial_r u(L_{\beta_0}(\theta),\theta)|^2
        \,+\,\frac{|\partial_\theta u(L_{\beta_0}(\theta),\theta)|^2}{\sin^2 L_{\beta_0}(\theta)}\right ]
    (\partial_\beta L)_{\beta_0}(\theta) \sin L_{\beta_0}(\theta) \, d\theta.\\
    \end{split}
  \end{equation*}
\end{prop}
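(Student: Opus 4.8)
The plan is to differentiate the explicit formulas \eqref{def:qt} and \eqref{def:nt} for $q(u;\alpha,\beta)$ and $n(u;\alpha,\beta)$ under the integral sign, evaluate the results at $(\alpha_0,\beta_0)$ --- where the diffeomorphism $\Phi$ is the identity, so that $A=B\equiv1$, $C\equiv0$, $R(r,\theta)=r$ and $D(r,\theta)=\sin r$ --- and then let $\ep\to0$. The first task is therefore to record the first-order data of $\Phi$ at the base point: using $\ell_\beta=L_\beta\circ\Theta$, $\ell_0=L_{\beta_0}$ and $\Theta=\theta+(\alpha-\alpha_0)\chi(\tfrac{\alpha_0-\theta}{\ep})$, I would compute $\partial_\alpha A,\partial_\alpha B,\partial_\alpha C,\partial_\alpha D$ and their $\beta$-analogues at $(\alpha_0,\beta_0)$ by Leibniz's rule (recalling $\partial_\beta\Theta=0$). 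The point of this computation is structural rather than quantitative: each such derivative is a finite sum of terms of three kinds --- (i) terms carrying a factor $\tfrac{1}{\ep}\chi'(\tfrac{\ell_0(\theta)-r}{\ep})$ or $\tfrac{1}{\ep}\chi'(\tfrac{\alpha_0-\theta}{\ep})$; (ii) terms carrying a bare factor $\chi(\tfrac{\ell_0(\theta)-r}{\ep})$, hence supported in an $O(\ep)$-strip along the side $[B_*,C]=\{r=L_{\beta_0}(\theta)\}$; (iii) terms carrying a factor $\chi(\tfrac{\alpha_0-\theta}{\ep})$, which occur only in $\partial_\alpha(\cdot)$ because $\partial_\alpha\Theta=\chi(\tfrac{\alpha_0-\theta}{\ep})$, and which tend pointwise to $0$ in the interior while equalling $1$ only on an $O(\ep)$-strip along $[A_*,C]=\{\theta=\alpha_0\}$.

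Next I would differentiate $q$ and $n$. Since $u$ is fixed and the integrands in \eqref{def:qt}--\eqref{def:nt} are rational in $A,B,C,D$ and analytic in $(\alpha,\beta)$ near $(\alpha_0,\beta_0)$, differentiation under the integral sign is legitimate, and collecting the coefficients of $(\partial_r u)^2$, $\partial_r u\,\partial_\theta u$, $(\partial_\theta u)^2$ and $u^2$ expresses $D_\alp^\ep(u)$ and $D_\beta^\ep(u)$ as integrals over $T_0$ of quadratic expressions in $(u,\nabla u)$ whose coefficients are assembled linearly from the first-order data above. Then I would pass to the limit using the elementary fact that
\begin{equation*}
  \int_0^L \tfrac{1}{\ep}\,\chi'\!\bigl(\tfrac{L-r}{\ep}\bigr) f(r)\,dr \;\longrightarrow\; -f(L) \qquad (\ep\to 0),
\end{equation*}
valid for continuous $f$ because $\int_\R\chi'=-1$ and $\chi'$ is compactly supported. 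This shows that the type-(i) terms converge to boundary integrals --- over $[B_*,C]$ for an $r$-layer, over $[A_*,C]$ for a $\theta$-layer --- of the corresponding traces, whereas the type-(ii) terms are $O(\ep)$ (the effective $r$-range has length $O(\ep)$), and the type-(iii) terms are $O(\ep)$ as well: when such a term carries no $\tfrac{1}{\ep}\chi'$-layer it is dominated by the $O(\ep)$ support of $\chi(\tfrac{\alpha_0-\theta}{\ep})$, and when it does carry one the $r$-layer integrates out to a bounded function of $\theta$ which is again killed by that $O(\ep)$ support. Consequently only the $\theta$-layer in $D_\alp^\ep$ and the $r$-layer in $D_\beta^\ep$ survive the limit.

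Finally, to bring the surviving boundary integrals into the stated form I would use that $u\in\Ecal_0$ is a genuine eigenfunction: by interior elliptic regularity together with the corner estimates of Appendix~\ref{app:reg}, $u$ and its tangential derivative vanish along $\partial T_0$. On $\{\theta=\alpha_0\}$ this gives $\partial_r u=0$, which collapses the surviving $\theta$-layer of $D_\alp^\ep$ to exactly $-\int_0^{\ell(\alpha)}\frac{|\partial_\theta u(r,\alpha)|^2}{\sin r}\,dr$; on $\{r=L_{\beta_0}(\theta)\}$ it gives $\partial_\theta u=-L_{\beta_0}'(\theta)\,\partial_r u$, and using this relation to rewrite the mixed term $\partial_r u\,\partial_\theta u$ and discarding the $u^2$-term (which vanishes on that side) turns the surviving $r$-layer of $D_\beta^\ep$ into the asserted expression. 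A useful consistency check is that $(\partial_\beta L)_{\beta_0}(\theta)\sin L_{\beta_0}(\theta)\,d\theta$ is precisely the (normal velocity)$\,\times\,$(arclength) element of the moving side $[B_*,C]$ as $\beta$ varies, so the two limits are indeed Hadamard-type formulas.

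The step I expect to be the genuine obstacle is controlling everything near the three corners of $T_0$. Away from the vertices the eigenfunctions are smooth and all the above manipulations are routine, but the limiting boundary integrals involve $\partial_r u$ and $\partial_\theta u$ right up to the vertices, so both their finiteness and the uniformity needed to justify the $\ep\to0$ passage (essentially a dominated-convergence argument) rely on the corner regularity of eigenfunctions provided by Appendix~\ref{app:reg}: near a corner of angle $\gamma$ an eigenfunction behaves like $\rho^{\pi/\gamma}$, and since the relevant angles here are $\leq\frac{\pi}{2}$ one has $\pi/\gamma\geq1$, which is exactly what makes the gradient bounded near the corner. Granting this input, what remains is the term-by-term bookkeeping of the limits.
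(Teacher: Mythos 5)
Your plan is correct and follows essentially the same route as the paper: differentiate \eqref{def:qt}--\eqref{def:nt} under the integral sign at the base point, classify the resulting terms by whether they carry a $\tfrac{1}{\ep}\chi'$ layer, a bare $\chi$ layer near $\{r=L_{\beta_0}(\theta)\}$, or a $\chi\bigl(\tfrac{\alpha_0-\theta}{\ep}\bigr)$ factor from $\partial_\alpha\Theta$, pass to the limit keeping only the first class, and then use the Dirichlet condition and its tangential-derivative consequence on each side to reach the stated forms. Your type-(i)/(ii)/(iii) bookkeeping is in fact a bit more scrupulous than the paper's (the paper asserts that of $\partial_\alpha A,\partial_\alpha B,\partial_\alpha C,\partial_\alpha D$ only $\partial_\alpha B$ is non-zero, whereas the others merely vanish in the $\ep\to0$ limit via your corner-strip argument), and the cancellation of the $(\partial_r u)^2$ piece along $\{\theta=\alpha_0\}$ is exactly the tangential Dirichlet identity you invoke. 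Two small cautions: the corner angles of a general $T_0\in\Mcal$ lie in $(0,\pi)$, not $(0,\tfrac{\pi}{2}]$ as you assert (the conclusion $\pi/\gamma>1$ still holds); and Appendix~\ref{app:reg} delivers weighted Sobolev control (\`a la Grisvard), not pointwise $\rho^{\pi/\gamma}$ asymptotics --- the weaker Sobolev input already suffices for the dominated-convergence step, so your argument is fine, but the phrasing slightly overstates what is actually proved there.
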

\begin{proof}
  Since $u\in H^1_0$, the expressions  $D_\alp^\ep(u)$ and  $D_\beta^\ep(u)$ can be obtained by differentiating under the integral sign the expressions
  given in \eqref{def:qt} and \eqref{def:nt}.

  Thus, for $D_\alp^\ep$, we need to compute $F(r,\theta\,;\,\alpha_0,\beta_0),\,\partial_\alpha F(r,\theta\,;\,\alpha_0,\beta_0)$
  for $F=A,B,C,D$. Of these four quantities, only $\partial_\alpha B$ does not vanish identically and after a somewhat lengthy but straightforward
  computation, we obtain
  \begin{equation*}
    D_\alp^{\ep}(u)\,=\, \int_{T_0} \frac{1}{\ep} \chi'\Bigl(\frac{\alpha_0-\theta}{\ep}\Bigr) \left[ \frac{|\partial_\theta u|^2}{\sin^2r} \,+\,
     \lambda_0 |u|^2 \right ]\sin r dr d\theta.
  \end{equation*}
  We now let $\ep$ go to $0$. When tested against sufficiently well-behaved functions, $\frac{1}{\ep}\chi'(\alpha_0-\theta)$ converges to the
  integration on the side
\begin{equation*}
   \big\{ (r,\alpha_0):~~r\in [0,L_{\beta_0}(\alpha_0)]\big \}.
\end{equation*}
We will provide, in Appendix~\ref{app:application_Hadamard},
  all the necessary estimates showing that this limit is justified when $u$ is an eigenfunction. We then obtain  
  \begin{equation*}
    \lim_{\ep \rightarrow 0} D_\alp^\ep(u)=\, -\int_0^{\ell(\alpha)} \frac{|\partial_\theta u(r,\alpha)|^2}{\sin r} \, dr.
  \end{equation*}
  The second term vanishes since $u$ satisfies the Dirichlet boundary condition.

  For $D_\beta^\ep$ we follow the same strategy, computing now the derivatives with respect to $\beta,$ still evaluated at $(\alpha_0,\beta_0)$.
  We find:
  \begin{equation*}
    \begin{split}
      D_\beta^\ep(u)\,=\,&\dis \int_{T_0} \partial_\beta L_{\beta_0}|\partial_r u|^2\left[ (\cos r)
        \chi\Bigl(\frac{L_{\beta_0}(\theta)-r}{\ep}\Bigr)\,+\,\frac{\sin r}{\ep}\chi'\Bigl(\frac{L_{\beta_0}(\theta)-r}{\ep}\Bigr)\right ]\, drd\theta\\
      & -2 \dis \int_{T_0} \frac{\partial_r u \partial_\theta u}{\sin r} \left [ \bigl(\partial_\beta L'_{\beta_0}\bigr)\chi\Bigl(\frac{L_{\beta_0}(\theta)-r}{\ep}\Bigr)
        \,+\,\partial_\beta L_{\beta_0}\frac{L'_{\beta_0}}{\ep} \chi'\Bigl(\frac{L_{\beta_0}(\theta)-r}{\ep}\Bigr) \right ]\, dr d\theta\\
      & -\dis \int_{\beta_0} \frac{|\partial_\theta u|^2}{\sin^2 r}\left[ (\cos r)
        \chi\Bigl(\frac{L_{\beta_0}(\theta)-r}{\ep}\Bigr)\,+\,\frac{\sin r}{\ep}\chi'\Bigl(\frac{L_{\beta_0}(\theta)-r}{\ep}\Bigr)\right ]\, drd\theta \\
      &-\lambda_0 \dis \int_{T_0} \partial_\beta L_{\beta_0} |u|^2 \left[ (\cos r)\chi\Bigl(\frac{L_{\beta_0}(\theta)-r}{\ep}\Bigr)
        -\frac{\sin r}{\ep}\chi'\Bigl(\frac{L_{\beta_0}(\theta)-r}{\ep}\Bigr)\right ]\, drd\theta.
    \end{split}
  \end{equation*}  
  As above, we will give in Appendix~\ref{app:application_Hadamard} the needed estimates to prove that the terms with  $\chi\bigl(\frac{L_{\beta_0}(\theta)-r}{\ep}\bigr)$
  converge to $0$ and the terms with $\frac{1}{ \ep}\chi'\bigl(\frac{L_{\beta_0}(\theta)-r}{\ep}\bigr)$ converge to a boundary
  integral over the side $\big\{ (L_{\beta_0}(\theta),\theta):\,\theta\in [0,\alp_0]\big \}$. Since $u$ satisfies the Dirichlet boundary condition,
  $u$ vanishes on the latter side. Denoting by $\gamma$ the parametrization $\theta \mapsto (L_{\beta_0}(\theta),\theta)$, 
  we obtain 
    \begin{multline*}
    \lim_{\ep \rightarrow 0}  D_\beta^\ep(u)=\,\\
     -\int_0^{\alpha_0} \left[|\partial_r u\circ \gamma |^2
    -2 L_{\beta_0}'(\theta)\frac{(\partial_r u\partial_\theta u)\circ \gamma }{\sin^2 L_{\beta_0}(\theta)}
    -\frac{|\partial_\beta u\circ \gamma|^2}{\sin^2 L_{\beta_0}(\theta)}\right ] (\partial_{\beta}L)_{\beta_0}(\theta)
  \sin (L_{\beta_0}(\theta)) \, d\theta.
\end{multline*}
  This expression can be simplified further by observing that Dirichlet boundary condition implies that
  $ \left ( L_{\beta_0}'\partial_r u \,+\,\partial_\theta u\right)\circ \gamma\,=\,0.$
  Finally, we obtain
  \begin{equation*}
   \lim_{\ep \rightarrow 0} D_\beta^\ep(u)=-\int_0^{\alpha_0} \left [ |\partial_r u\circ \gamma|^2
        \,+\,\frac{\partial_\theta u\circ \gamma|^2}{\sin^2 L_{\beta_0}(\theta)}\right ]
    \partial_\beta L(\theta\,;\alpha_0,\beta_0) \sin L_{\beta_0}(\theta) \, d\theta.\qedhere
  \end{equation*}
\end{proof}

\begin{rem}
\label{rem:cont}
  The estimates of the appendix are needed to properly prove the convergence when $\ep$ goes to zero for any
  eigenvalue branch at any triangle $T_0$. The proof of Theorem~\ref{thm:main_formulation_spectral} needs
  this computation only for the first three eigenvalues and at the equirectangle triangle. In the latter case, we have 
  an explicit expression for the eigenfunction so that the convergence can be proved directly without referring to
  the general Sobolev theory on singular domains.   
\end{rem}

Combining the latter proposition and the results of analytic perturbation
theory that we have recalled in the previous
section, we obtain the following two theorems.

\begin{thm}
\label{thm:Hadamard_1}
  Let $T_0 \in \Mcal$ and $\lambda_0$ be a simple eigenvalue of the Dirichlet spherical Laplace operator 
  of $T_0$. There exist $\delta>0$ and a neighbourhood $U\subset \Mcal$ of $T_0$ such that:
  \begin{itemize}
  \item Any triangle $T$ in $U$ has a unique eigenvalue $\lambda$ in $(\lambda_0-\delta,\lambda_0+\delta)$.
  \item The mapping $T \mapsto \lambda(T)$ is real-analytic on $U$.
  \item For any $T= T(\alpha,\beta)\in U$, we have
    \begin{equation*}
    \begin{split}
      \partial_\alpha\lambda(T)&=\, -\int_0^{L_{\beta}(\alpha)} \frac{|\partial_\theta u(r,\alpha)|^2}{\sin r} \, dr,\\
      \partial_\beta\lambda(T)&=-\int_0^{\alpha} \left [ |\partial_r u(L_{\beta}(\theta),\theta)|^2
        \,+\,\frac{ | \partial_\theta u(L_{\beta}(\theta),\theta)|^2}{\sin^2 L_{\beta}(\theta)}\right ]
    (\partial_\beta L)_{\beta}(\theta) \sin L_{\beta}(\theta) \, d\theta,\\
    \end{split}
    \end{equation*}
    where $u$ is a $L^2(T)$ normalized eigenfunction.  
  \end{itemize}
\end{thm}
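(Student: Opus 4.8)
The plan is to package together the two ingredients that have already been assembled: the general consequences of analytic perturbation theory listed after~\eqref{eq:eigen_pb}, and the limit computation of Proposition~\ref{prop:first_limits}. The theorem contains essentially no new analytic difficulty beyond those two results; the genuinely delicate estimates are the ones hidden inside Proposition~\ref{prop:first_limits} and Appendix~\ref{app:application_Hadamard}, which here may be used as a black box.

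First I would establish the first two bullet points. They are a direct application of the first item in the list of consequences of analytic perturbation theory (the simple-eigenvalue case): working on the fixed Hilbert space $H^1_0(T_0)$ with the analytic families of forms $q(\,\cdot\,;\alpha,\beta)$ and $n(\,\cdot\,;\alpha,\beta)$ from~\eqref{def:qt}--\eqref{def:nt}, simplicity of $\lambda_0$ yields $\delta>0$ and a neighbourhood $U$ of $T_0$ in which each $T=T(\alpha,\beta)$ has a unique eigenvalue $\lambda(T)\in(\lambda_0-\delta,\lambda_0+\delta)$, depending real-analytically on $(\alpha,\beta)$. Shrinking $U$ if necessary, $\lambda(T)$ stays isolated (it is the only eigenvalue of $T$ in the fixed window) and simple for every $T\in U$, so in particular $\lambda(T)$ is differentiable in $(\alpha,\beta)$ and the partial derivatives $\partial_\alpha\lambda(T)$, $\partial_\beta\lambda(T)$ make sense.

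For the Hadamard formulas, fix $T=T(\alpha,\beta)\in U$ and use \emph{this} triangle as the base point of the construction: one builds the diffeomorphism $\Phi^{(\ep)}_{\alpha',\beta'}$ onto nearby triangles and the corresponding forms, exactly as before with $(\alpha_0,\beta_0)$ replaced by $(\alpha,\beta)$. Let $u$ be an $L^2(T)$-normalized eigenfunction for $\lambda(T)$, so the eigenspace is $\R u$. Along the real-analytic curve $t\mapsto(\alpha+t,\beta)$, the eigenbranch through $\lambda(T)$ and its eigenfunction branch (starting at $u$) are analytic, and the Feynman--Hellmann formula~\eqref{eq:Feynman-Hellmann} combined with the splitting~\eqref{eq:der} (with $\dot\alpha=1$, $\dot\beta=0$) gives
\[
  \partial_\alpha\lambda(T)\;=\;\bigl(\partial_\alpha q-\lambda(T)\,\partial_\alpha n\bigr)(u)\;=\;D^\ep_\alpha(u),
\]
and this holds for \emph{every} $\ep>0$. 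Since the left-hand side is independent of $\ep$, I can let $\ep\to0$ and invoke Proposition~\ref{prop:first_limits} (applied at base point $T$), which converts $D^\ep_\alpha(u)$ into the announced boundary integral. Repeating the argument along $t\mapsto(\alpha,\beta+t)$ with $D^\ep_\beta$ yields the formula for $\partial_\beta\lambda(T)$.

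The one subtle point — and the only place where real work is required — is the passage $\ep\to0$: turning the bulk integrals weighted by $\tfrac1\ep\chi'(\,\cdot\,/\ep)$ into honest boundary integrals, and even making sense of the traces of $u$, $\partial_r u$, $\partial_\theta u$ on the sides (easy on the relatively open sides by interior elliptic regularity, but genuinely delicate near the three corners). This is exactly the content that Proposition~\ref{prop:first_limits} imports from Appendix~\ref{app:application_Hadamard}, so in the proof of this theorem it is inherited rather than proved anew. As Remark~\ref{rem:cont} points out, for the application to Theorem~\ref{thm:main_formulation_spectral} one only needs these formulas at the equirectangle triangle, where the eigenfunctions are explicit and the limit can be verified directly, bypassing the general singular-domain Sobolev theory.
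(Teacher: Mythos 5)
Your proposal is correct and follows essentially the same line as the paper's own proof: it obtains the first two bullets from the analytic perturbation theory of the fixed-Hilbert-space forms, then relocates the base point of the diffeomorphism construction to the arbitrary $T\in U$, observes that the Feynman--Hellmann derivative $D^\ep_\alpha(u)$ (resp.\ $D^\ep_\beta(u)$) equals the $\ep$-independent quantity $\partial_\alpha\lambda(T)$ (resp.\ $\partial_\beta\lambda(T)$) for every $\ep$, and lets $\ep\to0$ via Proposition~\ref{prop:first_limits}. The paper's version simply runs this argument simultaneously for Theorems~\ref{thm:Hadamard_1} and~\ref{thm:Hadamard_2}, whereas you treat only the simple-eigenvalue case, but the mechanism is identical.
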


\begin{thm}
\label{thm:Hadamard_2}
  For $t$ in an interval $I$, let $t\mapsto T(\alpha(t),\beta(t))= T_t$ be an analytic family of spherical triangles and $\lambda_0$ be an eigenvalue
  of multiplicity $m$ of $T_0$. Then there exist $m$ analytic functions $(E_k)_{1\leq k\leq m}$ defined on $I$ such that:
  \begin{enumerate}
  \item There exist $\delta_0, t_0>0$ such that, for any $t\in (-t_0,t_0)$ and any eigenvalue $\lambda$
    in $\spec (T_t)\cap (\lambda_0-\delta_0,\lambda_0+\delta_0)$, the multiplicity of $\lambda$ is the number of $k$ such that
    $E_k(t)=\lambda$.
  \item The derivatives $\dot{E}_k(0)$ are the eigenvalues of the quadratic form
    \begin{align}\label{def:deriv}
        u \mapsto &-\dot{\alpha}(0)\int_0^{L_{\beta}(\alpha)} \frac{|\partial_\theta u(r,\alpha)|^2}{\sin r} \, dr\\
         &-\dot{\beta}(0) \int_0^{\alpha} \left [ |\partial_r u(L_{\beta}(\theta),\theta)|^2
        \,+\,\frac{\partial_\theta u(L_{\beta}(\theta),\theta)|^2}{\sin^2 L_{\beta}(\theta)}\right ]
      (\partial_\beta L)_{\beta}(\theta) \sin L_{\beta}(\theta) \, d\theta,\nonumber
    \end{align}
    restricted to the eigenspaces of $\lambda_0$ and relatively to the $L^2$ norm on $T$.
    \end{enumerate}
\end{thm}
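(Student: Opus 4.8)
The plan is to deduce the theorem by combining Proposition~\ref{prop:first_limits} with the consequences of analytic perturbation theory recalled above; no essentially new ingredient enters, only a careful tracking of the eigenvalue branches and a single passage to the limit $\ep\to 0$.

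\emph{Existence of the $E_k$ and assertion (1).} Since $t\mapsto T_t=T(\alpha(t),\beta(t))$ is real-analytic, the quadratic forms $q(\,\cdot\,;\alpha(t),\beta(t))$ and $n(\,\cdot\,;\alpha(t),\beta(t))$ built through the diffeomorphism $\Phi^{(\ep)}_{\alpha(t),\beta(t)}$ are real-analytic in $t$ on the fixed space $H^1_0(T_0)$, so analytic perturbation theory provides a family $(t\mapsto E_i(t))_{i\geq 1}$ of real-analytic functions on $I$ that exhausts $\spec(T_t)$ with multiplicity for every $t$. By the min--max principle there is $\delta_0>0$ such that, for $t$ in a neighbourhood of $0$, the number of eigenvalues of $T_t$ in $(\lambda_0-\delta_0,\lambda_0+\delta_0)$ counted with multiplicity is constant, equal to $m$; by continuity of the $E_i$ these are precisely the values at $t$ of the $m$ branches passing through $\lambda_0$ at $t=0$, which I relabel $E_1,\dots,E_m$. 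Shrinking $I$ to an interval $(-t_0,t_0)$ I may also assume that every remaining branch stays outside $(\lambda_0-\delta_0,\lambda_0+\delta_0)$ on it. Then for $|t|<t_0$ one has $\spec(T_t)\cap(\lambda_0-\delta_0,\lambda_0+\delta_0)=\{E_1(t),\dots,E_m(t)\}$ as multisets, hence the multiplicity of any eigenvalue $\lambda$ lying in this interval equals $\#\{k:E_k(t)=\lambda\}$, which is (1).

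\emph{Assertion (2).} The Feynman--Hellmann formula \eqref{eq:Feynman-Hellmann} and its consequence recalled above state that the derivatives $\dot E_k(0)$, $1\leq k\leq m$, are the eigenvalues, relatively to the scalar product $n$ on $T_0$, of the quadratic form $\dot q-\lambda_0\dot n$ restricted to the $\lambda_0$-eigenspace $\Ecal_0$ of $T_0$. By \eqref{eq:der}, for the auxiliary cut-off parameter $\ep>0$ one has $\dot q-\lambda_0\dot n=\dot\alpha(0)\,D_\alpha^{\ep}+\dot\beta(0)\,D_\beta^{\ep}$. The left-hand side, and hence the eigenvalues of its restriction to $\Ecal_0$, are intrinsic---they equal the numbers $\dot E_k(0)$---so they do not depend on $\ep$; as $\Ecal_0$ is finite-dimensional, convergence of quadratic forms on it forces convergence of the associated eigenvalues, so I may let $\ep\to0$. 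Proposition~\ref{prop:first_limits} then gives, for every $u\in\Ecal_0$ and writing $(\alpha,\beta)=(\alpha(0),\beta(0))$,
\[
 D_\alpha^{\ep}(u)\ \longrightarrow\ -\int_0^{L_{\beta}(\alpha)}\frac{|\partial_\theta u(r,\alpha)|^2}{\sin r}\,dr ,
\]
\[
 D_\beta^{\ep}(u)\ \longrightarrow\ -\int_0^{\alpha}\Big[\,|\partial_r u(L_{\beta}(\theta),\theta)|^2+\tfrac{|\partial_\theta u(L_{\beta}(\theta),\theta)|^2}{\sin^2 L_{\beta}(\theta)}\,\Big](\partial_\beta L)_{\beta}(\theta)\sin L_{\beta}(\theta)\,d\theta ,
\]
and taking $\dot\alpha(0)$ times the first limit plus $\dot\beta(0)$ times the second shows that the $\dot E_k(0)$ are exactly the eigenvalues, relatively to $n$, of the quadratic form \eqref{def:deriv} restricted to $\Ecal_0$. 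This is (2).

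The one point requiring genuine care---and the main obstacle---is the limit $\ep\to0$ in Proposition~\ref{prop:first_limits} for $u\in\Ecal_0$: one must verify that $\tfrac1\ep\chi'(\tfrac{\alpha_0-\theta}{\ep})$ and $\tfrac1\ep\chi'(\tfrac{L_{\beta_0}(\theta)-r}{\ep})$, tested against the quadratic expressions in $u,\partial_r u,\partial_\theta u$ appearing in \eqref{def:qt}--\eqref{def:nt}, converge to the announced boundary integrals, while the contributions carrying $\chi(\tfrac{L_{\beta_0}(\theta)-r}{\ep})$ vanish. This rests on uniform control of $u$ and $\nabla u$ up to the corner-bearing sides of $T_0$; the required estimates are the object of Appendix~\ref{app:application_Hadamard}, and in the situation of the equirectangle triangle and its lowest eigenvalues---the only case needed for Theorem~\ref{thm:main_formulation_spectral}---they are immediate from the explicit polynomial eigenfunctions, cf.\ Remark~\ref{rem:cont}.
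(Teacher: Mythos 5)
Your proposal is correct and follows essentially the same approach as the paper's joint proof of Theorems~\ref{thm:Hadamard_1} and~\ref{thm:Hadamard_2}: analytic perturbation theory together with the min--max principle gives the branches $E_k$ and assertion (1), and the $\ep$-independence of the derivatives $\dot E_k(0)$ licenses the passage to the limit $\ep\to 0$ via Proposition~\ref{prop:first_limits}. You also correctly locate the only delicate point (justifying the $\ep\to 0$ limit for the boundary integrals) and its resolution in Appendix~\ref{app:application_Hadamard} and Remark~\ref{rem:cont}.
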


\begin{proof}
The proof of the two theorems follows the same line. First we fix some $\ep>0$. The first statements, in particular the existence of $U$ 
(or $t_0$), follow from the previous section using the family of diffeomorphisms
$\Phi^\ep$. It remains to compute the derivatives. For this, we pick a triangle $T$ in the neighbourhood $U$, and we write,
for each derivative and each $\ep>0$ the formula that is obtained using $\Phi^\ep$ (with now $T$ as the starting point).
Using analyticity, the eigenvalue branches that we obtain do not depend on $\ep$. So for each $\ep$, the formula
for the derivative of simple eigenbranches gives the same value. For a multiple eigenvalue, the quadratic form that gives the derivatives has
the same eigenvalues. We can thus let $\ep$ go to zero and and Proposition~\ref{prop:first_limits} then yields the result.
\end{proof}

\subsection{The first eigenvalue on $\Mcal$}

In this section, we describe the first eigenvalue $\lambda_1$ as a function on the set of spherical triangles
$\Mcal$.

It is well known that the first eigenvalue of a domain in a Riemannian manifold is always simple. It then follows
that the eigenvalue $\lambda_1$ depends analytically on $(\alpha,\beta)$ in $(0,\pi)\times (0,\pi)$. We now make a
list of several facts that help us understand the level sets of $\lambda_1$.
\begin{enumerate}
  \item\textit{Symmetry:} the symmetry with respect to the median hyperplane of $[A_*,B_*]$ in the sphere 
    exchanges $T(\alpha,\beta)$ and $T(\beta,\alpha)$. The function $\lambda_1$ is thus symmetric with respect to
    $(\alpha,\beta)\leftrightarrow (\beta,\alpha)$.
  \item\textit{Monotonicity:} if $\alpha'\geq \alpha$ and $\beta'\geq \beta$, then the triangle $T(\alpha,\beta)$ is a subset of
    $T(\alpha',\beta')$. Using the min-max principle, Dirichlet eigenvalues are shown to be decreasing relative to the inclusion of domains.
    We thus infer:
    \begin{equation*}   \alpha'\geq \alpha~~\text{and}~~\beta'\geq \beta~~\implies~~\lambda_1(\alpha',\beta') \leq \lambda_1(\alpha,\beta).
    \end{equation*}
    Since the first Dirichlet eigenvalue of the hemisphere is $2$, we also get that
    \begin{equation*}
      \forall T\,\in \Mcal,~\lambda_1(T)\geq 2.
    \end{equation*}
  \item\textit{Regularity:}
    \begin{equation*}
      \forall (\alpha,\beta)\in (0,\pi)\times (0,\pi),~~\partial_\alpha \lambda_1(\alpha,\beta)<0,~~\text{and}~~
      \partial_\beta \lambda_1(\alpha,\beta)<0.
    \end{equation*}
    Let us prove that  $\partial_\alpha \lambda_1\,\neq\,0$ by contradiction. If this derivative vanishes then
    the integral formula of Theorem~\ref{thm:Hadamard_1} implies that $\partial_\theta u$
    vanishes on one side of the triangle. If we reflect the triangle across this side, we obtain a rhombus to which
    we extend $u$ by $0$. We denote by $\widetilde{u}$ this extension and we test $(\Delta-\lambda)\widetilde{u}$ against a smooth function with compact support
    in the rhombus. Using integration by parts (Green's formula) inside and outside the original triangle, we obtain an integral over
    the side across which we have reflected. This integral vanishes because $u$ and
    $\partial_\theta u$ vanish on that side. This proves that $\widetilde{u}$ is an eigenfunction (with the same eigenvalue)
    of the Dirichlet Laplace operator in the rhombus. This violates the principle of analytic continuation for eigenfunctions.
    By symmetry, the derivative with respect to $\beta$ cannot vanish. The monotonicity
    gives the sign.
  \item\textit{Behaviour near the boundary:}
    If $\alpha$ or $\beta$ goes to $0$, then $\lambda_1$ goes to infinity. This is a general fact about shrinking domains
    with Dirichlet boundary condition. For instance, here, we could use the minmax principle to compare with a spherical angular sector 
    whose angle goes to $0$.
    
    If $\alpha$ goes to $\pi$ and $\beta$ goes to $\beta_0\in (0,\pi]$, the first eigenvalue
    $\lambda_1(\alpha,\beta)$ converges to the first eigenvalue of the $\Dcal_{\beta_0}$, the
    digon of opening angle $\beta_0$ (see Figure~\ref{fig:ex2}, left). Indeed, the family of spectral problems is continuous up to 
$(0,\pi]\times (0,\pi]$.
\item\textit{Behaviour on the boundary:} All the computations we made are still valid for $\alpha=\pi$ and varying $\beta$.
  It follows that the mapping $\mu$ which, to an angle $\beta \in (0,\pi]$, assigns
  the first Dirichlet eigenvalue of the digon of opening angle $\beta$, is an analytic, decreasing diffeomorphism from
  $(0,\pi)$ onto $(2,\infty)$ that extends continuously at $\pi$.
\end{enumerate}
\begin{rem}
\label{rem:digon}
    Observe that in \cite{SeWeZh-21} (see also \cite{Wa-74,WaKe-77}), it is proved that
  the spectrum of the digon of angle $\beta$ can be explicitly computed. This computation yields
  $\mu(\beta)\,=\,\frac{\pi}{\beta}(\frac{\pi}{\beta}+1)$ which obviously satisfies all claimed features.
\end{rem}
These properties allow us to prove the following proposition, which gives a reasonably complete understanding of how $\lambda_1$
behaves as a function on $\Mcal$. See Figure~\ref{fig:path_triangles} for an illustration.

\begin{prop}
  For any $c\in (2,\infty)$, there exists $\alpha_c\in (0,\pi)$ such that the first eigenvalue of $\Dcal_{\alpha_c}$ is $c$.
  The level set $\lambda_1^{-1}\{c\}\subset \Mcal$ is an analytic curve that can be globally parametrized by
  $\alpha\in (\alpha_c,\pi)$. More precisely, there exists an analytic function $\Bcal_c$ from $(\alpha_c,\pi)$ such that
  \begin{equation*}
    \lambda_1(T(\alpha,\beta))\,=\, c ~~\iff~~\beta \,=\,\Bcal_c(\alpha).
  \end{equation*}
  The function $\Bcal_c$ is decreasing, extends continuously to $[\alpha_c,\pi]$ by $\Bcal_c(\alpha_c)=\pi$ and $\Bcal_c(\pi)=\alpha_c$
  and $\Bcal_c'(\frac{\pi}{2})\,=\,-1$.
\end{prop}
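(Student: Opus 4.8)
The plan is to reduce everything to the behaviour of the single–variable function $g_\alpha(\beta):=\lambda_1(T(\alpha,\beta))$ and then to assemble facts (i)--(v) above. The existence of $\alpha_c$ is immediate from fact (v) (or from the explicit formula $\mu(\beta)=\frac{\pi}{\beta}(\frac{\pi}{\beta}+1)$ of Remark~\ref{rem:digon}): since $\mu$ is a decreasing diffeomorphism from $(0,\pi)$ onto $(2,\infty)$, set $\alpha_c:=\mu^{-1}(c)$, so that $\lambda_1(\Dcal_{\alpha_c})=c$. For fixed $\alpha\in(0,\pi)$, the function $\beta\mapsto g_\alpha(\beta)$ is analytic and, by fact (iii), \emph{strictly} decreasing; by fact (iv) it tends to $+\infty$ as $\beta\to0^+$; and using the symmetry (i), the degeneration $T(\alpha,\pi)=\Dcal_\alpha$, and the continuity of the family of spectral problems up to the limiting lune (fact (iv)), it tends to $\mu(\alpha)$ as $\beta\to\pi^-$. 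Hence $g_\alpha$ is a decreasing bijection from $(0,\pi)$ onto $(\mu(\alpha),+\infty)$, so $\lambda_1(T(\alpha,\beta))=c$ has a (unique) solution in $\beta$ exactly when $\mu(\alpha)<c$, i.e.\ exactly when $\alpha>\alpha_c$. Calling that solution $\Bcal_c(\alpha)$, we obtain $\lambda_1^{-1}\{c\}=\{(\alpha,\Bcal_c(\alpha)):\alpha_c<\alpha<\pi\}$.

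Analyticity and monotonicity of $\Bcal_c$ then follow from the implicit function theorem applied to the analytic map $(\alpha,\beta)\mapsto\lambda_1(T(\alpha,\beta))-c$, whose partial $\partial_\beta\lambda_1$ is negative everywhere (fact (iii)): differentiating $\lambda_1(T(\alpha,\Bcal_c(\alpha)))=c$ gives $\Bcal_c'(\alpha)=-\partial_\alpha\lambda_1/\partial_\beta\lambda_1<0$, where the two partials are exactly the nonpositive boundary integrals of Theorem~\ref{thm:Hadamard_1}; the graph of the analytic function $\Bcal_c$ is the asserted analytic curve. For the endpoint $\alpha_c$: $\Bcal_c$ is decreasing and bounded above by $\pi$, so it has a limit $\beta^\ast\leq\pi$ there; if $\beta^\ast<\pi$, continuity of $\lambda_1$ at the interior point $(\alpha_c,\beta^\ast)$ would give $\lambda_1(T(\alpha_c,\beta^\ast))=c$, contradicting $g_{\alpha_c}(\beta^\ast)>\inf_{(0,\pi)}g_{\alpha_c}=\mu(\alpha_c)=c$; hence $\Bcal_c(\alpha_c)=\pi$. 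For the endpoint $\pi$: by the symmetry (i), for any $\alpha\in(\alpha_c,\pi)$ the point $(\Bcal_c(\alpha),\alpha)$ also lies on the level set, which by the range analysis above forces $\Bcal_c(\alpha)\in(\alpha_c,\pi)$ and $\Bcal_c\circ\Bcal_c=\mathrm{id}$; a continuous decreasing involution of $(\alpha_c,\pi)$ that sends $\alpha_c^+$ to $\pi^-$ must send $\pi^-$ to $\alpha_c^+$, so $\Bcal_c(\pi)=\alpha_c$.

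It remains to compute the slope on the diagonal. The continuous strictly decreasing map $\alpha\mapsto\Bcal_c(\alpha)-\alpha$ has a unique zero $\alpha_0\in(\alpha_c,\pi)$; at $(\alpha_0,\alpha_0)$ the symmetry $\lambda_1(\alpha,\beta)=\lambda_1(\beta,\alpha)$ forces $\partial_\alpha\lambda_1=\partial_\beta\lambda_1$, whence $\Bcal_c'(\alpha_0)=-1$ (equivalently: the reflection $(\alpha,\beta)\leftrightarrow(\beta,\alpha)$ fixes $(\alpha_0,\alpha_0)$ and preserves the level curve, so its tangent there, being of negative slope, must be the anti-diagonal direction). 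For the value $c=\lambda_1(T_*)=12$ --- the one relevant to Theorem~\ref{thm:main_formulation_spectral} --- one has $T(\frac{\pi}{2},\frac{\pi}{2})=T_*$, hence $\alpha_0=\frac{\pi}{2}$ and $\Bcal_c'(\frac{\pi}{2})=-1$. The step I expect to be the real work is the global one in the first paragraph: identifying $\lim_{\beta\to\pi^-}g_\alpha(\beta)=\mu(\alpha)$ leans on the continuity of the Dirichlet spectrum as the triangle degenerates onto a digon (the uniform equivalence of the quadratic forms, extended to that singular limit, as in the setup of analytic perturbation theory above), and excluding a premature limit $\beta^\ast<\pi$ at $\alpha_c$ needs a little care; everything afterwards --- the implicit function theorem, the one-line derivative identity, and the symmetry computation of the slope --- is routine given facts (i)--(v).
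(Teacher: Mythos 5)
Your proof follows essentially the same route as the paper's --- existence of $\alpha_c$ from the explicit digon spectrum, bijectivity of $\beta\mapsto\lambda_1(\alpha,\beta)$ onto $(\mu(\alpha),\infty)$ via the monotonicity and boundary-behaviour facts, analyticity from the implicit function theorem, and the endpoint and slope claims from the symmetry $(\alpha,\beta)\leftrightarrow(\beta,\alpha)$ --- merely supplying the details the paper's one-line closing sentence leaves to the reader. You also correctly locate where the slope is forced to be $-1$: at the diagonal fixed point $\alpha_0$ with $\lambda_1(\alpha_0,\alpha_0)=c$, which equals $\frac{\pi}{2}$ exactly when $c=12=\lambda_1(T_*)$; so, as worded for arbitrary $c\in(2,\infty)$, the proposition's final assertion $\Bcal_c'(\frac{\pi}{2})=-1$ appears to overreach (for $c\leq 6$ the point $\frac{\pi}{2}$ is not even in the domain $(\alpha_c,\pi)$), but this is harmless since only $c=12$ is used in the proof of Theorem~\ref{thm:main_formulation_spectral}.
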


\begin{figure}
\begin{center}
\includegraphics[width=.69in, trim=3.5cm 1cm 4cm 1.5cm, clip]{./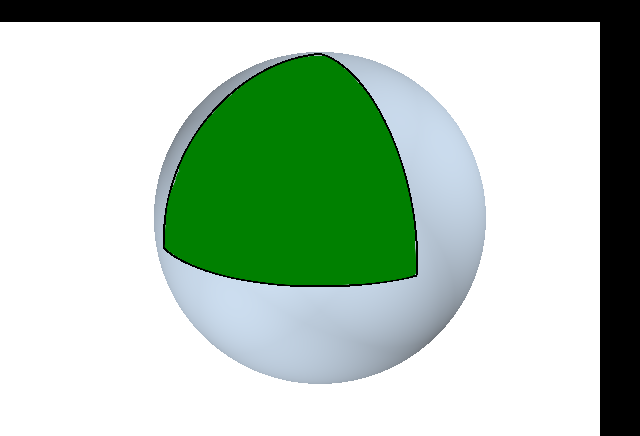}
\includegraphics[width=.69in, trim=3.5cm 1cm 4cm 1.5cm, clip]{./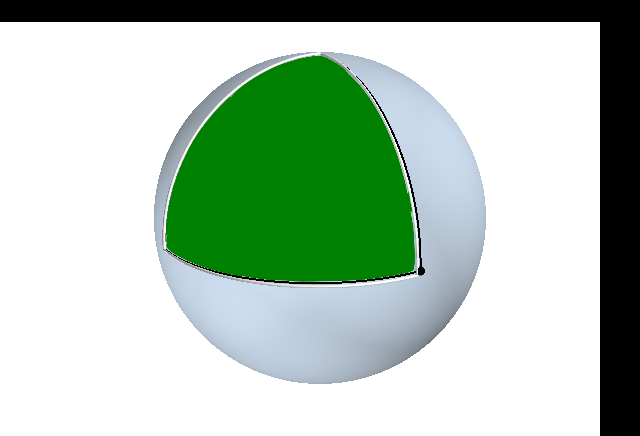}
\includegraphics[width=.69in, trim=3.5cm 1cm 4cm 1.5cm, clip]{./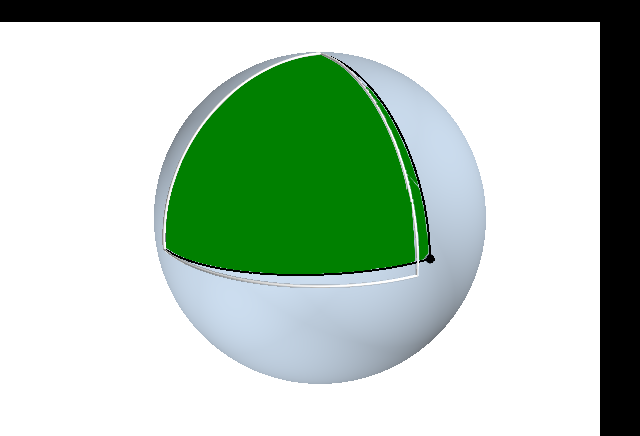}
\includegraphics[width=.69in, trim=3.5cm 1cm 4cm 1.5cm, clip]{./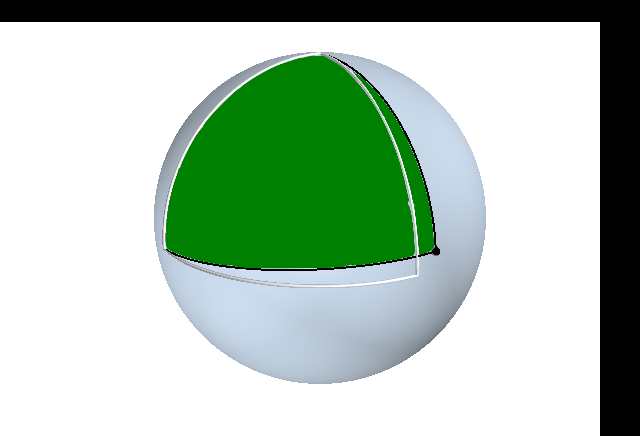}
\includegraphics[width=.69in, trim=3.5cm 1cm 4cm 1.5cm, clip]{./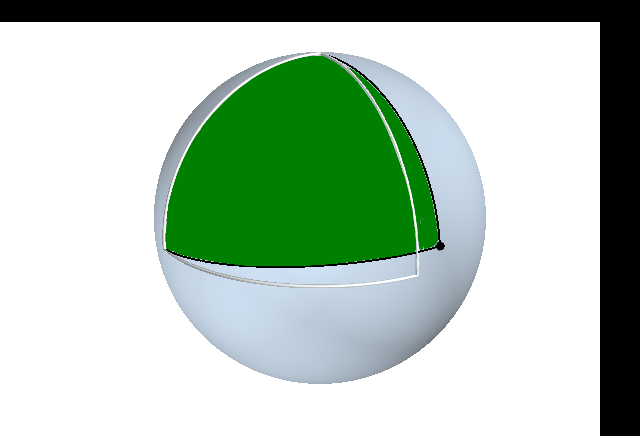}
\includegraphics[width=.69in, trim=3.5cm 1cm 4cm 1.5cm, clip]{./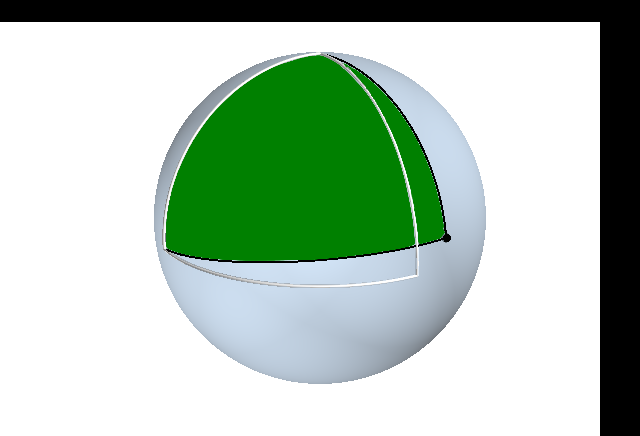}
\includegraphics[width=.69in, trim=3.5cm 1cm 4cm 1.5cm, clip]{./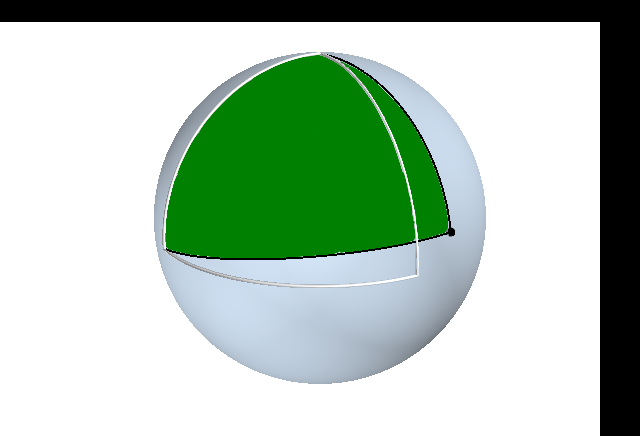}
\includegraphics[width=.69in, trim=3.5cm 1cm 4cm 1.5cm, clip]{./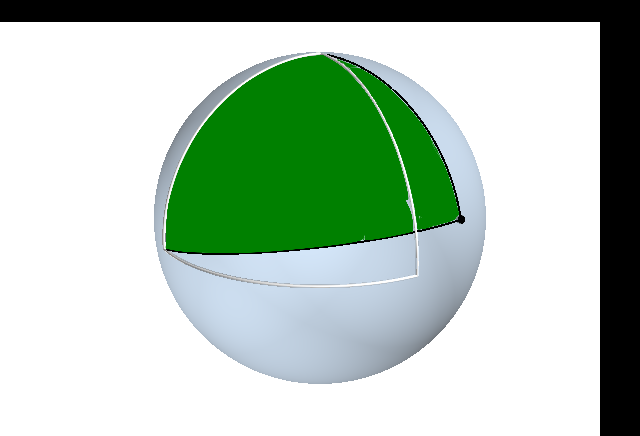}

\includegraphics[width=.69in, trim=3.5cm 1cm 4cm 1.5cm, clip]{./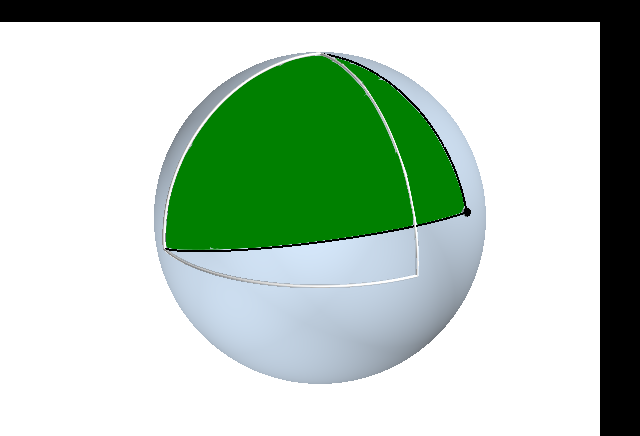}
\includegraphics[width=.69in, trim=3.5cm 1cm 4cm 1.5cm, clip]{./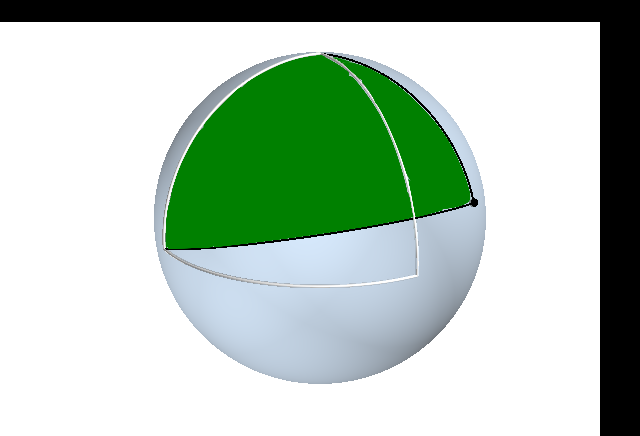}
\includegraphics[width=.69in, trim=3.5cm 1cm 4cm 1.5cm, clip]{./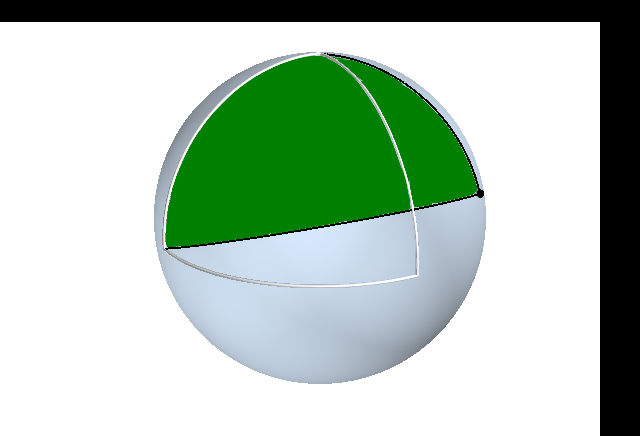}
\includegraphics[width=.69in, trim=3.5cm 1cm 4cm 1.5cm, clip]{./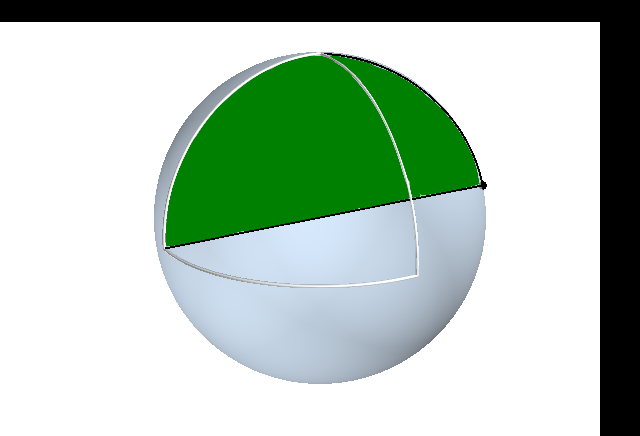}
\includegraphics[width=.69in, trim=3.5cm 1cm 4cm 1.5cm, clip]{./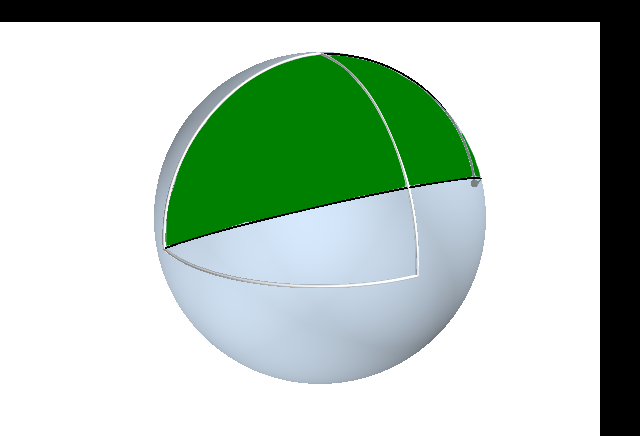}
\includegraphics[width=.69in, trim=3.5cm 1cm 4cm 1.5cm, clip]{./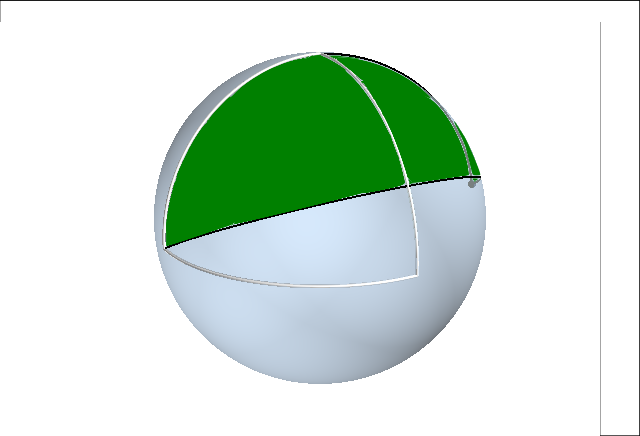}
\includegraphics[width=.69in, trim=3.5cm 1cm 4cm 1.5cm, clip]{./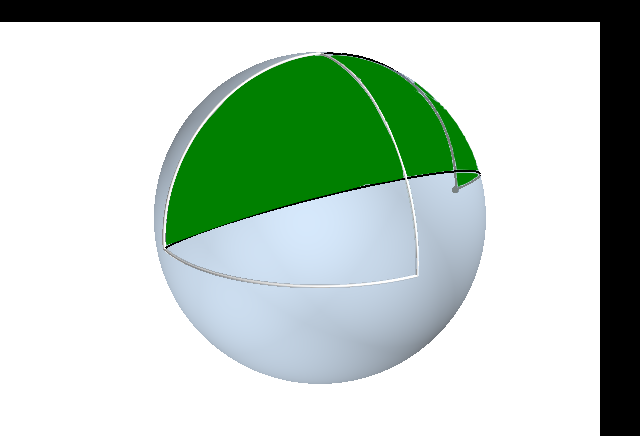}
\includegraphics[width=.69in, trim=3.5cm 1cm 4cm 1.5cm, clip]{./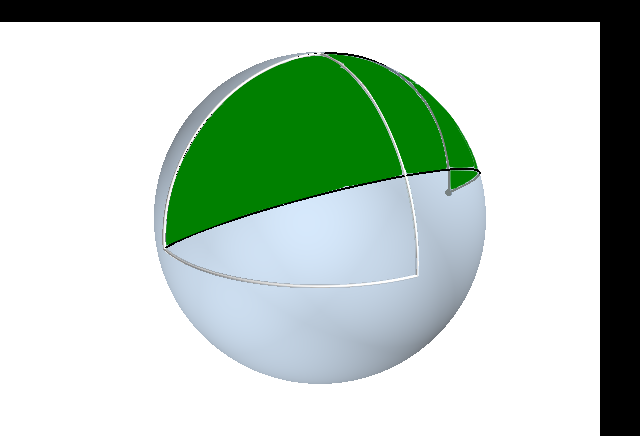}
\end{center}
\caption{All these triangles have first Dirichlet eigenvalue $\lambda_1=12$. The first domain is the equirectangle triangle $T_*$ and the last domain is the digon of opening $\frac{\pi}{3}$. The second eigenvalue is strictly decreasing along this path of triangles, going from $30$ to $20$.}
\label{fig:path_triangles}
\end{figure}

\begin{proof}
  The first statement follows from the known behaviour of the first eigenvalue of digons, see Remark~\ref{rem:digon}. Moreover,
  using monotonicity in $\beta$, for each $\alpha> \alpha_c$ the mapping $\beta \mapsto \lambda_1(\alpha,\beta)$
  is a decreasing diffeomorphism onto $(\mu(\alpha),\infty)$. Since this interval contains $c$, there is a unique $\beta$
  that we denote by $\Bcal_c(\alpha)$ such that $\lambda_1(\alpha,\beta)=c$.
  The fact that $\Bcal_c(\alpha)$ is analytic then follows from the implicit function theorem.
  The remaining statements follow from the behaviour of $\lambda_1$ near and on the boundary and the symmetry.
\end{proof}

\section{Proof of Theorem \ref{thm:main_formulation_spectral}}
\begin{proof}
Let $T_*$ be the equirectangle triangle with vertices $A_*,\,B_*,\,C_*$, see Figure~\ref{fig:ex1}. It corresponds to $\alpha=\beta=\frac{\pi}{2}$.
We have $\lambda_1(T_0) =12$. In the sequel, we set $\Bcal=\Bcal_c$ for $c=12$.
Using the formula in \cite[Proposition 1.1]{SeWeZh-21} for the eigenvalues of the digons,
we obtain that the function $\Bcal$ is defined on $[\frac{\pi}{3},\pi]$, analytic in $(\frac{\pi}{3},\pi)$ and
\begin{equation*}
  \forall \alp \in \Bigl[\frac{\pi}{3},\pi\Bigr],~~\lambda_1(\alp, \Bcal(\alp))\,=\,12.
\end{equation*}
Let $I$ be a small interval around $0$, for $t\in I$, we define
\begin{equation*}
  \alpha(t)\,=\,\frac{\pi}{2}+t,~~\beta(t)\,=\,\Bcal\circ \alpha(t)
\end{equation*}
and set $T_t\,=\,T(\alpha_t,\beta_t)$. The curve $t\mapsto T_t$ is real analytic and, using the properties of $\Bcal$,
we know that the spectrum of $T_t$ is organized into analytic branches. Since the second eigenvalue of $T_0$ is of multiplicity
$2$, we can restrict $I$ so that there exists three analytic functions $E_1,E_2,E_3$ such that
\begin{equation*}
  \forall t\in I,~\lambda_1(t)\,=\,E_1(t)\,=12,~~\lambda_2(t)\,=\,\min\{E_2(t),E_3(t)\},~\lambda_3(t)\,=\,\max\{ E_2(t),E_3(t)\}.
\end{equation*}

Let $\Ecal$ be the (two-dimensional) eigenspace corresponding to $\lambda_2(0)$.
We make the formulas \eqref{def:deriv} explicit, using in particular that $(\partial_\beta L)_{\frac{\pi}{2}}(\theta)\,=\sin \theta$ and
that $\dot{\alp}(0)=1$ and $\dot{\beta}(0)=-1$. Thus, the derivatives of $E_2$ and $E_3$ are given by the
eigenvalues of the quadratic form 
\begin{equation*}
  d(u)\,=\,-\int_0^{\frac{\pi}{2}} \frac{|\partial _\theta u(r,\frac{\pi}{2})|^2}{\sin r} \, dr +
  \int_0^{\frac{\pi}{2}} |\partial_r u(\textstyle{\frac{\pi}{2}},\theta)|^2\sin \theta \, d\theta
\end{equation*}
with respect to the $L^2$ norm.
We define on $T_0$ the functions $u_1$ and $u_2$ by 
\begin{equation*}
\left\{\begin{array}{rcl}
u_1(r,\theta)&=&\sqrt{\frac{1155}{8\pi}}\left( 3\cos^5r-4\cos^3r+\cos r\right)\sin 2\theta,\\
u_2(r,\theta)&=&\sqrt{\frac{3465}{32\pi}}\cos r \sin^4 r \sin 4\theta,
\end{array}\right.
\end{equation*}  
that form a $L^2$-orthonormal basis of $\Ecal$ (see in \cite[Cor.~2.1]{SeWeZh-21} for instance). 

We compute
\begin{equation*}\dis
  \begin{array}{rclcl}
    \dis \int_{0}^{\frac{\pi}{2}} \frac{|\partial_\theta u_1(r,\frac{\pi}{2})|^2}{\sin r}\, dr
    &=&\dis \frac{1155}{8\pi}\int_0^{\pid} \frac{4\left(3\cos^5 r -4\cos^3r+\cos r\right)^2}{\sin r}\, dr
    &=&\dis \frac{44}{\pi},\smallskip\\
    \dis \int_{0}^{\frac{\pi}{2}} \frac{|\partial_\theta u_2(r,\frac{\pi}{2})|^2}{\sin r}\, dr
    &=& \dis \frac{3465}{32\pi}\int_0^{\pid} \frac{16\cos^2 r\sin ^8 r}{\sin r}\, dr
    &=& \dis \frac{44}{\pi},\smallskip\\
    \dis \int_{0}^{\frac{\pi}{2}} \frac{\partial_\theta u_1(r,\frac{\pi}{2})\partial_\theta u_2(r,\frac{\pi}{2})}{\sin r}\, dr
    &=& C \dis \int_0^{\pid} \frac{\left(3\cos^5 r -4\cos^3r+\cos r\right)\cos r \sin^4 r}{\sin r}\, dr
    &=& 0,
  \end{array}
\end{equation*}
where $C$ is some numerical constant that we do not need to write down explicitly, and
\begin{equation*}\dis
  \begin{array}{rclcl}
   \dis  \int_{0}^{\frac{\pi}{2}} |\partial_r u_1(\textstyle{\frac{\pi}{2}},\theta)|^2\sin \theta\, d\theta
    &=& \dis\frac{1155}{8\pi}\int_0^{\pid} \sin^2(2\theta)\sin \theta d\theta
    &=& \dis \frac{77}{\pi},\smallskip\\
    \dis \int_{0}^{\frac{\pi}{2}} |\partial_r u_2(\textstyle{\frac{\pi}{2}},\theta)|^2\sin \theta\, d\theta
    &=& \dis \frac{3465}{32\pi}\int_0^{\pid} \sin^2(4\theta)\sin \theta\, d\theta
    &=& \dis \frac{55}{\pi},\smallskip\\
    \dis \int_{0}^{\frac{\pi}{2}} \partial_r u_1(\textstyle{\frac{\pi}{2}},\theta )\partial_r u_2(\frac{\pi}{2},\theta)\sin \theta\, d\theta
    &=& \dis \frac{1155\sqrt{3}}{16\pi}\int_0^{\pid} \sin (2\theta)\sin(4\theta)\sin \theta\, d\theta
    &=& -11\sqrt{3}.
  \end{array}
\end{equation*}
We obtain that the matrix that represents $d$ in the basis $(u_1,u_2)$ of $\Ecal$ is
\begin{equation*}
  D\,=\,-\frac{11}{\pi}
  \begin{pmatrix}
    3 & -\sqrt{3} \\
    -\sqrt{3} & -3
  \end{pmatrix}.
\end{equation*}
Since $(u_1,u_2)$ is a $L^2$ orthonormal basis of $\Ecal$, the derivatives $\dot{E_2}$ and $\dot{E_3}$ are the eigenvalues of
$D$ that we compute to be $\pm \frac{22\sqrt{3}}{\pi}$.
Finally, we obtain, for $t$ close to $0$,
\begin{equation*}
\left\{\begin{array}{rcl}
  E_2(t)&=&12\,+\,t\cdot \frac{22\sqrt{3}}{\pi}\,+\, o(t),\\
  E_3(t)&=&12\,-\,t\cdot \frac{22\sqrt{3}}{\pi}\,+\, o(t),\\
  \lambda_2(t)&=&30\,-\,|t|\cdot \frac{22\sqrt{3}}{\pi}\,+\, o(t),\\
  \lambda_3(t)&=&30\,+\,|t|\cdot \frac{22\sqrt{3}}{\pi}\,+\, o(t),
\end{array}\right.
\end{equation*}
and the proof is complete.
\end{proof}

\begin{rem}
 Using the computations above, the quadratic form 
that is given by equation~\eqref{def:deriv} at the equirectangle 
triangle when expressed in the basis $(u_1,u_2)$ yields the same expression as 
(3.11) in \cite{SeWeZh-21}. We could have use analyticity to claim that the formula \textit{loc.\ cit.}, 
which is proved for variations of triangles of diameter $\frac{\pi}{2}$ 
(using different variational ---Feynman-Hellmann type---formulas), still holds for any variation. 
We have found it interesting to write down the Hadamard variational formula so as to have a (slightly) 
different proof. 
\end{rem}

\begin{rem}
  This perturbation approach can be implemented starting from any initial triangle $T_*$, and, basically, we only need to show that
  the ratio $\frac{\lambda_2}{\lambda_1}$ cannot be always rational. Although this seems a rather weak statement, our proof requires some
  rather precise knowledge of the eigenfunctions. This explains our choice of the equirectangle triangle.
\end{rem}

\appendix

\section{Regularity of eigenfunctions of a triangle}\label{app:reg}
The aim of this appendix is to provide the necessary estimates that allow to pass to the limit $\ep \rightarrow 0$ in order
to obtain the Hadamard variational formulas. All the results can be extracted from the literature on elliptic
problems in domains with corners (see \cite{Grisvard, Kondratiev} for instance). We have chosen to give some ideas
of the proof so as to have a self-contained presentation.

We fix a spherical triangle $T(\alpha, \beta)$ for some $(\alpha, \beta) \in (0,\pi)^2$. Explicitly, it is the
domain as in \eqref{paramT}
equipped with the metric $g\,=\,dr^2\,+\,\sin^2 r d\theta^2$.
We may see $T$ as a subset of the plane $\R^2$ equipped with the coordinates $x=r\cos \theta$ and $y=r\sin \theta$.
The metric $g$ is then uniformly equivalent to the Euclidean metric $dx^2+dy^2$.

We recall the definition of the usual Sobolev spaces, for $k\in \N$ and using the common multiindex notation:
\begin{equation*}
  \begin{split}
    H^k(T)&=\, \{ u\in L^2(dxdy):~~\forall \alpha,~|\alpha| \leq k,~\partial^\alpha u \in L^2(dxdy) \},\\
    \forall u\in H^k(T),~&~\| u\|_{H^k}^2\,=\,\sum_{|\alpha|\leq k} \| \partial^\alpha u\|_{L^2(dxdy)}.
  \end{split}
\end{equation*}
We also recall that $H^1_0(T)$ is the completion of $C^\infty_0(T)$ with respect to the $H^1$ norm.

Writing the partial derivatives $\partial_x$ and $\partial_y$ in polar coordinates, we see that 
the Sobolev space $H^1(T)$ can alternatively be defined as follows:
\begin{equation*}
  H^1(T)\,=\,\{ u\in L^2(T, \sin r dr d\theta):~~\partial_r u \text{ and } \frac{1}{\sin r}\partial_\theta u \in L^2(T, \sin r dr d\theta)\}.
\end{equation*}

We observe that the set $H^1_0(T)$ is defined in Section~\ref{sec:defns} as the completion of $C^\infty_0(T)$ with respect to
the Dirichlet energy quadratic form $q$. We see here that it coincides with the usual one.

The norm that is associated with the spherical Laplace operator is
\begin{equation*}
  \forall u\in C^\infty_0(T),~~\| u\|_{\Delta}^2\,=\, \|u\|^2_{L^2(\sin r dr d\theta)}\,+\,\| \Delta u\|^2_{L^2(\sin r dr d\theta)}. 
\end{equation*}
By standard ellipticity estimates, this norm is equivalent to the $H^2$ norm.
\begin{lem}
The space $C^\infty_0(T)$ is dense in $\dom \Delta$ with respect to the $\|\cdot \|_{\Delta}$ norm. 
\end{lem}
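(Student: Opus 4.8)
The plan is to reduce the statement to a piece of elliptic regularity theory for the corner domain $T$ together with a standard density argument for Sobolev spaces, the only real input being the behaviour near the three vertices. First I would prove the regularity fact underlying the remark ``By standard ellipticity estimates, this norm is equivalent to the $H^2$ norm'', namely that $\dom\Delta=H^2(T)\cap H^1_0(T)$ and that on this space $\|\cdot\|_\Delta$ is equivalent to the $H^2$ norm. The inequality $\|u\|_\Delta\leq C\|u\|_{H^2}$ is immediate, since $\Delta$ is a second-order operator with coefficients uniformly comparable to the Euclidean ones (recall $dr^2+\sin^2 r\,d\theta^2$ is uniformly equivalent to $dx^2+dy^2$ on $T$); the content is the a priori estimate $\|u\|_{H^2}\leq C(\|\Delta u\|_{L^2}+\|u\|_{L^2})$ for $u\in\dom\Delta$. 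Interior regularity and regularity along the relatively open sides of $T$ are classical: one flattens a boundary arc and runs the usual difference-quotient estimate for the Dirichlet problem. The one genuine point is regularity near a vertex of opening angle $\omega$, where I would invoke the Kondratiev/Mellin description of $L^2$ solutions of the Dirichlet problem in a plane sector (see \cite{Grisvard,Kondratiev}): the possible non-$H^2$ contributions are spanned by functions behaving like $r^{k\pi/\omega}\sin(k\pi\theta/\omega)$, $k\geq 1$, and since every vertex angle of a spherical triangle lies in $(0,\pi)$ one has $\pi/\omega>1$, so each such function already lies in $H^2$ near the vertex; hence no singular part obstructs $H^2$-regularity. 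This applies in particular to the equirectangle triangle $T_*$ and to every $T(\alpha,\beta)\in\Mcal$.

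Granting this, it remains to approximate an arbitrary $u\in H^2(T)\cap H^1_0(T)$, in the $H^2$ norm (equivalently, by Step 1, in $\|\cdot\|_\Delta$), by smooth functions vanishing on $\partial T$. Since $T$ is, via the above comparison of metrics, bi-Lipschitz to a Euclidean polygon, $C^\infty(\ol T)$ is dense in $H^2(T)$ by the Meyers--Serrin / segment-condition argument; pick $v_n\in C^\infty(\ol T)$ with $v_n\to u$ in $H^2(T)$. Then the boundary traces satisfy $v_n|_{\partial T}\to u|_{\partial T}=0$ in the appropriate trace space, and I would subtract from $v_n$ a smooth correction $w_n\in C^\infty(\ol T)$ carrying the same trace as $v_n$ on $\partial T$, with $\|w_n\|_{H^2(T)}$ bounded by the trace norm of $v_n|_{\partial T}$, hence tending to $0$. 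Such a bounded right inverse of the trace map exists here because, all vertex angles lying in $(0,\pi)$, the corner compatibility conditions required of a trace coming from an $H^2$ function are automatically satisfied (again \cite{Grisvard}). The functions $v_n-w_n$ are then smooth up to $\ol T$, vanish on $\partial T$, and converge to $u$ in $H^2(T)$, which is the desired conclusion; the passage to $\|\cdot\|_\Delta$-convergence is Step 1.

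I expect the main obstacle to be, unsurprisingly, the three corners: everything away from them (the interior estimates, the boundary straightening, the mollification bookkeeping) is routine, whereas both the $H^2$ a priori estimate of the first step and the trace-lifting with good corner compatibility in the second step rely on the very same fact, that each opening angle is strictly less than $\pi$, so that the Mellin exponents $k\pi/\omega$ all exceed $1$ and the associated weighted (Hardy-type) estimates near the vertices hold. This is the only place where the geometry of spherical triangles — each angle in $(0,\pi)$ — is really used, and obtaining these vertex estimates cleanly is the part that needs the most care; a re-entrant corner would genuinely break the statement.
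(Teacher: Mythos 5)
Your approach is genuinely different from the paper's. The paper argues by Hilbert-space orthogonality: it takes $u\in\dom\Delta$ with $\langle u,\psi\rangle_\Delta=0$ for all $\psi\in C^\infty_0(T)$, reflects $T$ across its boundary to form a doubled surface $\widetilde{T}$ with three cone points of total angle $<2\pi$, extends $u$ oddly to $\widetilde{u}$, shows that the relevant differential operator applied to $\widetilde{u}$ is supported at the cone points, and then uses a local cone-point analysis to force $\widetilde{u}$ to be even, hence $u=0$. You instead go through elliptic regularity (Kondratiev/Mellin analysis at the convex corners, giving $\dom\Delta = H^2(T)\cap H^1_0(T)$ with $\|\cdot\|_\Delta\sim\|\cdot\|_{H^2}$ on $\dom\Delta$) followed by a direct approximation argument. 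Your Step~1 is correct and self-contained, and it is precisely the regularity information that the paper actually feeds into the Corollary and uses in Appendix~B; in that sense your route is the more conventional one and yields the same usable conclusion.

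However, your Step~2 does not reach the stated target. You construct $v_n-w_n\in C^\infty(\overline T)$ with vanishing boundary trace, but the lemma asserts density of $C^\infty_0(T)$, the compactly supported smooth functions, and no further argument is supplied to pass from boundary-trace-zero approximants to compactly supported ones. This last passage is not a routine cutoff: by your own observation that $\|\cdot\|_\Delta$ is comparable to $\|\cdot\|_{H^2}$ (on $C^\infty_0(T)$ this follows by extending by zero to $\mathbb{S}^2$ and applying the global elliptic estimate there, with a constant independent of the support), a sequence in $C^\infty_0(T)$ that is $\|\cdot\|_\Delta$-Cauchy is $H^2$-Cauchy, so its limit lies in the $H^2$-closure of $C^\infty_0(T)$, i.e.\ in $H^2_0(T)$, where both the trace \emph{and} the normal-derivative trace vanish. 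Since $\dom\Delta$ contains functions whose normal derivative does not vanish on $\partial T$ (e.g.\ the first Dirichlet eigenfunction), $H^2_0(T)\subsetneq\dom\Delta$, so a cutoff cannot bridge the gap. You should be aware that this tension is already present in the stated lemma together with the remark ``this norm is equivalent to the $H^2$ norm'' that precedes it: read literally they would give $\dom\Delta\subset H^2_0(T)$. The content that is actually needed and correct, and which your Step~1 delivers, is $\dom\Delta\subset H^2(T)$ together with the Hardy-type weighted estimates near the vertices (the Corollary); the density statement should be understood with $C^\infty(\overline T)$-functions vanishing on $\partial T$ (which is what you construct), not with $C^\infty_0(T)$.
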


\begin{proof}
  Let $u\in \dom \Delta$ that is orthogonal to $C^\infty_0(T)$ with respect to the Hilbertian norm $\|\cdot \|_{\Delta}$.
  Double $T$ to form a surface $\widetilde{T}$ that is a sphere equipped with a singular spherical metric that has three
  cone points $A,B,C$ at which the total angle is (strictly) less
  than $2\pi$. Extend $u$ a function $\widetilde{u}$ on $\widetilde{T}$ that is odd with respect to the natural involution of $\widetilde{T}$.
  Using Green's formula, by construction, we have  
  \begin{equation*}
    \forall \phi \in C^\infty_0(\widetilde{T}\setminus \{A,B,C\}),~~\langle \widetilde{u}, (\Delta+1) \phi \rangle \,=\,0.
  \end{equation*}
  It follows that, in the distributional sense, $(\Delta +1)\widetilde{u}$ is supported at
  $A,B,C$. The latter can be explicitly described using polar coordinates and it turns out that, since the total angle
  is less than $2\pi$, $\widetilde{u}$ must also be even. Thus, the function $\widetilde{u}$ vanishes and thus also $u$. 
\end{proof}

The latter lemma gives $\dom \Delta = \mathring{W}^2_2(T)$ in the notations of \cite{Grisvard}, and Theorem~1.4.4.4 \textit{loc.\ cit.}\
applies to give the following corollary.
\begin{cor}
  For any $u \in \dom \Delta$, $(\sin r)^{-2}u$, $(\sin r)^{-1}\partial_r u$ and $(\sin r)^{-2} \partial_\theta u$ belong to
  $L^2(\sin r dr d\theta)$.
\end{cor}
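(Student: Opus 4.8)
The plan is to reduce everything to a neighbourhood of the unique vertex at which the weight $\sin r$ degenerates, namely $A_*=\{r=0\}$. On $\overline{T}$ one has $r\leq L_\beta(\theta)<\pi$, so $\sin r$ is bounded below by a positive constant away from $A_*$; there the three quantities $(\sin r)^{-2}u$, $(\sin r)^{-1}\partial_r u$, $(\sin r)^{-2}\partial_\theta u$ are dominated by $|u|+|\partial_r u|+\tfrac{1}{\sin r}|\partial_\theta u|$, which lies in $L^2(\sin r\,drd\theta)$ since $u\in\dom\Delta=\mathring{W}^2_2(T)\subset H^1(T)$ (the identification $\dom\Delta=\mathring{W}^2_2(T)$ being exactly the previous lemma together with $\|\cdot\|_\Delta\simeq\|\cdot\|_{H^2}$). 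Hence the statement is purely local near $A_*$. In the chart $x=r\cos\theta,\ y=r\sin\theta$ of this appendix the triangle $T$ becomes near the origin a curvilinear sector $S$ of opening angle $\alpha\in(0,\pi)$, the metric is uniformly Euclidean, $\rho:=\sqrt{x^2+y^2}=r$ so that $\sin r=\sin\rho\simeq\rho$, and $\partial_r$, $\tfrac{1}{\sin r}\partial_\theta$ are comparable to the Euclidean gradient components; the corollary therefore amounts to proving, near the origin,
\[
   \rho^{-2}u\in L^2(dxdy)\qquad\text{and}\qquad \rho^{-1}\nabla u\in L^2(dxdy).
\]

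Since $\dom\Delta=\mathring{W}^2_2(T)$ is the closure of $C^\infty_0(T)$ in the $H^2$-graph norm and every opening angle of $T$ lies in $(0,\pi)$ — so there is no reentrant corner, and the leading corner exponent $\pi/\alpha$ is strictly $>1$ — the weighted embedding of $\mathring{W}^2_2$ near a corner, as stated in \cite[Thm~1.4.4.4]{Grisvard}, gives precisely these two memberships; the variable coefficients coming from the spherical metric and the smoothly curved sides perturb the flat model only by lower-order terms and are handled by the same theory. Combined with the elementary bound away from $A_*$, this proves the corollary.

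For completeness, here is a self-contained argument near $A_*$. Because $u\in H^1_0(S)$, its tangential derivative along each edge of $S$ vanishes: $\partial_x u$ vanishes on the edge $\{\theta=0\}$, and $\cos\alpha\,\partial_x u+\sin\alpha\,\partial_y u$ vanishes on the edge $\{\theta=\alpha\}$, both being in $H^1(S)$. The sector Hardy inequality $\int_S\rho^{-2}|f|^2\,dxdy\leq(2\alpha/\pi)^2\int_S|\nabla f|^2\,dxdy$, valid whenever $f$ vanishes on one edge of $S$ (expand $f$ in the Dirichlet/Neumann eigenbasis of $-\partial_\theta^2$ on $(0,\alpha)$, whose smallest eigenvalue $(\pi/2\alpha)^2$ is positive), applied to these two first derivatives and combined using $\sin\alpha\neq0$, yields $\rho^{-1}\nabla u\in L^2(dxdy)$. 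Feeding this together with $\Delta u\in L^2$ into $\Delta u=\partial_r^2u+\cot r\,\partial_r u+\tfrac{1}{\sin^2 r}\partial_\theta^2 u$ bounds $\rho^{-2}\partial_\theta^2 u$ in $L^2(dxdy)$, and since $u(\rho,\cdot)$ satisfies homogeneous Dirichlet conditions at $\theta=0$ and $\theta=\alpha$, the one-dimensional Poincaré inequality applied twice in $\theta$ upgrades this to $\rho^{-2}u\in L^2(dxdy)$.

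In either approach the only genuinely delicate point is the passage from the flat model sector to the actual curvilinear spherical sector with its variable metric: one must either invoke the Kondrat'ev--Grisvard machinery verbatim or check by hand that the corrections $\cot\rho=\rho^{-1}+O(\rho)$, $\sin^{-2}\rho=\rho^{-2}(1+O(\rho^2))$ and the curvature of the sides contribute only terms of strictly lower order in $\rho$, which do not affect the weighted $L^2$ bounds; everything else, in particular $\dom\Delta=\mathring{W}^2_2(T)$, has already been established.
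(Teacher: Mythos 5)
Your primary route is exactly the paper's: invoke \cite[Thm~1.4.4.4]{Grisvard} for $u\in\dom\Delta=\mathring{W}^2_2(T)$ to get $r^{-2}u,\ r^{-1}\partial_x u,\ r^{-1}\partial_y u\in L^2(dxdy)$, and then translate back to polar coordinates via $r\partial_r u=x\partial_x u+y\partial_y u$ and $\partial_\theta u=x\partial_y u-y\partial_x u$; your explicit reduction to a neighbourhood of $A_*$ and the identification $\sin r\simeq\rho$ are details the paper leaves tacit. Two remarks. First, the caveat about ``no reentrant corner'' and the exponent $\pi/\alpha>1$ is not needed: Grisvard's Thm~1.4.4.4 is a Hardy-type inequality for $\mathring{W}^{m,p}$ and holds for any corner angle (the corner exponent only matters for the distinct question of whether $\dom\Delta$ equals $H^2\cap H^1_0$, which the paper sidesteps by proving $\dom\Delta=\mathring{W}^2_2$ directly in the preceding lemma). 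Second, your self-contained argument via the sector Hardy inequality applied to $\partial_x u$ and $\cos\alpha\,\partial_x u+\sin\alpha\,\partial_y u$, followed by the radial ODE rewriting of $\Delta u$ to control $\rho^{-2}\partial_\theta^2 u$ and hence $\rho^{-2}u$, is a genuine and correct alternative that effectively reproves the relevant case of Grisvard by hand; the only slip is calling the last step ``Poincaré twice'' ($\partial_\theta u$ does not satisfy the Dirichlet condition, so one should instead write $\int|\partial_\theta u|^2=-\int u\,\partial_\theta^2 u$ and combine with Poincaré and Cauchy--Schwarz, or argue spectrally in the sine basis), but the conclusion $\rho^{-2}u\in L^2$ is correct.
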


\begin{proof}
Theorem 1.4.4.4 of \cite{Grisvard} gives $r^{-2}u$, $r^{-1}\partial_xu$ and $r^{-1}\partial_y u$ in $L^2(T,\,dxdy)$. The claim follows using
  \begin{equation}\label{eq:firstorder}
      r\partial_r u =\, x\partial_x u \,+\, y\partial_y u \quad \text{and} \quad 
      \partial_\theta u = x\partial_y u -y\partial_x u.  \qedhere
  \end{equation}
\end{proof}

Finally, we obtain:
\begin{prop}
  Let $u$ be an element of the domain of the Dirichlet Laplace operator. Then
  $u$, $\partial_r u$ and $\frac{1}{\sin r}\partial_\theta u $ belong to $H^1(T)$.
\end{prop}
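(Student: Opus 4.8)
The plan is to bootstrap everything from two facts already in hand: (i) $u\in\dom\Delta$ coincides with $u\in\mathring{W}^2_2(T)\subset H^2(T)$, the Sobolev space being understood in the Euclidean coordinates $x=r\cos\theta$, $y=r\sin\theta$; and (ii) the bound $r^{-1}\partial_x u,\ r^{-1}\partial_y u\in L^2(T,dx\,dy)$, established in the proof of the preceding corollary as the conclusion of Theorem~1.4.4.4 of \cite{Grisvard}. Since $H^2(T)\subset H^1(T)$, the statement for $u$ itself is immediate, so the whole content is the regularity of the two ``polar'' first derivatives.

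First I would rewrite the identities \eqref{eq:firstorder} in the form
\[
  \partial_r u\,=\,\cos\theta\,\partial_x u+\sin\theta\,\partial_y u,\qquad
  \tfrac{1}{r}\partial_\theta u\,=\,\cos\theta\,\partial_y u-\sin\theta\,\partial_x u,
\]
so that $\partial_r u$ and $\tfrac{1}{r}\partial_\theta u$ are obtained from the Cartesian derivatives $\partial_x u,\partial_y u\in H^1(T)$ by a pointwise rotation of angle $\theta$. It then suffices to check that, for $v\in H^1(T)$ of the form $v=\partial_x u$ or $v=\partial_y u$, the products $\cos\theta\,v$ and $\sin\theta\,v$ again belong to $H^1(T)$. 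By the Leibniz rule, $\nabla(\cos\theta\,v)=v\,\nabla(\cos\theta)+\cos\theta\,\nabla v$; the second term lies in $L^2(T,dx\,dy)$ because $|\cos\theta|\leq1$ and $v\in H^1(T)$, and for the first I would use $\nabla(\cos\theta)=-\sin\theta\,\nabla\theta$ with $|\nabla\theta|=1/r$, so that $|v\,\nabla(\cos\theta)|\leq|v|/r$. Hence $\cos\theta\,v\in H^1(T)$ as soon as $v/r\in L^2(T,dx\,dy)$, and the same for $\sin\theta\,v$. Applying this with $v=\partial_x u$ and $v=\partial_y u$, the needed bound $r^{-1}\partial_x u,\ r^{-1}\partial_y u\in L^2(T,dx\,dy)$ is exactly fact (ii), and we conclude $\partial_r u\in H^1(T)$ and $\tfrac{1}{r}\partial_\theta u\in H^1(T)$.

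To pass from $\tfrac{1}{r}\partial_\theta u$ to $\tfrac{1}{\sin r}\partial_\theta u$, I would observe that for any $T(\alpha,\beta)$ with $(\alpha,\beta)\in(0,\pi)^2$ the radial variable stays in a compact subinterval of $[0,\pi)$ (indeed $L_\beta(\theta)=\arccot(\cot\beta\sin\theta)$ remains bounded away from $\pi$ on $(0,\alpha)$), so $r\mapsto\tfrac{r}{\sin r}$ is smooth and bounded, with bounded derivatives, on the relevant interval; multiplication by such a function preserves $H^1(T)$. Since $\tfrac{1}{\sin r}\partial_\theta u=\tfrac{r}{\sin r}\cdot\tfrac{1}{r}\partial_\theta u$, this finishes the proof.

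The only genuinely delicate point is the behaviour at the vertex $A_*$: a priori $\partial_r u$ and $\tfrac{1}{\sin r}\partial_\theta u$ need not be in $H^1(T)$ because differentiating the angular coefficients $\cos\theta,\sin\theta$ produces a factor blowing up like $1/r$ there, and the whole argument hinges on absorbing that factor against $\partial_x u,\partial_y u$. This is possible exactly because the preceding corollary — resting on the density lemma and Theorem~1.4.4.4 of \cite{Grisvard} — supplies the weighted bound $r^{-1}\partial_x u,\ r^{-1}\partial_y u\in L^2(T,dx\,dy)$. Away from $A_*$ the weight $1/r$ is bounded and nothing beyond $u\in H^2(T)$ and the Leibniz rule is needed, so no separate treatment of the vertices $B_*$ and $C$ is required.
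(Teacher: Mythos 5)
Your argument is correct and follows the same route as the paper: both exploit the identities \eqref{eq:firstorder} to express the polar derivatives through the Cartesian ones, both rely on the two inputs $u\in H^2(T)$ and the weighted bound $r^{-1}\partial_x u,\ r^{-1}\partial_y u\in L^2$ from the preceding corollary, and both verify membership in $H^1$ by computing second-order derivatives and absorbing the $1/r$ factors produced when differentiating $\cos\theta,\sin\theta$. Where you differ is purely organizational: the paper compresses the whole verification into the phrase ``check by inspection'' after writing $\partial_r^2 u$, $\tfrac{1}{\sin r}\partial_\theta\partial_r u$, $\partial_r\bigl(\tfrac{1}{\sin r}\partial_\theta u\bigr)$ and $\tfrac{1}{\sin^2 r}\partial_\theta^2 u$ in Cartesian coordinates, whereas you isolate the underlying mechanism as a clean multiplication lemma — $\cos\theta\,v,\ \sin\theta\,v\in H^1(T)$ whenever $v\in H^1(T)$ and $v/r\in L^2$ — apply it to $v=\partial_x u,\partial_y u$, and then handle the harmless factor $r/\sin r$ separately after noting that $r$ stays in a compact subinterval of $[0,\pi)$ on $T(\alpha,\beta)$. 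Your version makes explicit exactly where the weighted estimate from the corollary is used and why only the vertex $A_*$ is delicate, which the paper leaves implicit; the mathematical content is identical.
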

\begin{proof}
  Since $u\in \dom \Delta$, then all derivatives with respect to $(x,y)$ of order at most $2$ are in $L^2$.
  We compute $\partial_r^2 u$ and $\frac{1}{\sin r}\partial_\theta\partial_ru$ in Cartesian coordinates
  using the relations \eqref{eq:firstorder}, and we check by inspection that these are $L^2$.
  We do the same for $\partial_r \frac{1}{\sin r}\partial_\theta u$ and $\frac{1}{\sin^2 r}\partial^2_\theta u$.
\end{proof}

\section{Application to the Hadamard formula}
\label{app:application_Hadamard}
In this section, we use the regularity of eigenfunctions to prove that the limit $\ep \rightarrow 0$ that is used
in the proof of Hadamard variational formula is justified (see the proof of Proposition~\ref{prop:first_limits}). As in Remark~\ref{rem:cont}, we emphasize here that it is actually not needed
to prove Theorem~\ref{thm:main_formulation_spectral} since  we would only need to consider the first three eigenfunctions of
the equirectangle triangle for which explicit bounds can be given.

We begin by addressing the limit of $D_\beta^\ep(u)$ since it gives simpler computations; the global strategy being the same.
We first observe that in all terms, the $\chi$ or $\chi'$ factor cuts-off near $r=L_{\beta_0}(r)$. Using that $u\in H^1_0(T)$,
all terms with the factor $\chi$ converge to $0$ by integration of a $L^1$ function on a shrinking domain. It remains to address
the terms with a $\chi'$. All these terms can be written under the expression
\begin{equation*}
  \B_\ep(h)\,=\, \int_0^{\alpha_0}\int_0^{\ell(\theta)} \frac{1}{\ep}\chi'\Bigl(\frac{L_{\beta_0}(\theta)-r)}{\ep}\Bigr)h(r,\theta) \, dr d\theta, 
\end{equation*}
in which $h$ is obtained as a product of a smooth function (away of $r=0$) times two first order derivatives of $u$.
Since $u\in \dom T$, the partial derivatives $\partial_r u$ and $\partial_\theta u$
are in $H^1(T\cap \{ r\geq r_0>0\})$. It follows that $h$ belongs to $W^1_1(T)$: the set of $L^1$ functions whose gradient is also $L^1$.

Changing variables, we have
\begin{equation*}
  \B_\ep(h)\,=\, \int_0^1 \int_0^{\alpha_0} \chi'(\rho)h(L_{\beta_0}(\theta)-\ep \rho, \theta) d\rho d\theta.
\end{equation*}
Since $\displaystyle \int \chi'(\rho) d\rho = -1$, we see that
\begin{equation*}
  \B_\ep(h)+\int_0^{\alpha_0} h(L_\beta(\theta),\theta)\,d\theta\,=\,
  \int_0^1 \int_0^{\alpha_0} \chi'(\rho)\left[h(L_{\beta_0}(\theta)-\ep \rho, \theta)-h(L_{\beta_0}(\theta),\theta)\right] d\rho d\theta. 
\end{equation*}

We define the mapping $\Phi$ on $[0,1]\times [0,1]\times [0,\alpha_0]$ by $(\rho,s,\theta) \mapsto (\rho, L_{\beta_0}(\theta)-s\ep \rho, \theta)$.
For any fixed $\theta$ and $\rho $, $s\mapsto \Phi(\rho,s,\theta)$  parametrizes a curve
in $T$ and since $u\in W^1_1$, we can write
\begin{equation*}
  h(L_{\beta_0}(\theta)-\ep \rho, \theta)-h(L_{\beta_0}(\theta),\theta)\,=\, -\int_0^1 \ep \rho \partial_rh \circ \Phi(\rho, s,\theta)\, ds.  
\end{equation*}
This leads us to consider the integral
\begin{equation*}
R_\ep\,=\,\int_0^1\int_0^1\int_0^{\alpha_0} \left | \chi'(\rho)\ep \rho \partial_rh \circ \Phi(\rho, s,\theta)\right | dsd\rho d\theta.
\end{equation*}
The mapping $\Phi$ is obviously smooth and injective, and since its Jacobian determinant
is $-\ep \rho$, the change of variables is legitimate. Using that $\rho'$ has support in $[\frac{1}{3},\frac{2}{3}]$, we obtain
\begin{equation*}
  R_\ep \,=\, \int_{U_\ep} \left | \chi'(\rho) \partial_rh (r,\theta)\right | d\rho dr d\theta,
\end{equation*}
where $U_\ep$ is the image of $[\frac{1}{3},\frac{2}{3}]\times [0,1] \times [0,\alpha]$ under $\Phi$. We define the set $V_\ep\subset T$ such that 
\begin{equation*}
  \{1\}\times V_\ep =\Phi \{ 1\}\times [0,1]\times [0,\alpha_0],
\end{equation*}
it is straightforward that $U_\ep\subset [0,1]\times V_\ep$ and that $U_\ep$
is a shrinking neighbourhood in $T$ of the side $\{(L(\theta),\theta):~\theta \in [0,\alpha_0] \}$.
Since $\partial_rh \in L^1(T)$, we obtain that $R_\ep$ goes to $0$ with $\ep$ and, hence,
\begin{equation*}
  \lim_{\ep \rightarrow 0} \B_\ep (h)\,=\,-\int_0^{\alpha_0} h(L_\beta(\theta),\theta)\,d\theta. 
\end{equation*}
Applying this result to the different terms in $D_\beta^\ep(u)$ yields the Hadamard formula for $\partial_\beta E$.

We follow the same strategy to study the limit of $D_\alp^\ep(u)$. We have two terms that can be brought to the expression
\begin{equation*}
   \Abb_\ep(h) \,=\, \int_0^{\alpha_0}\int_0^{L_{\beta_0}(\theta)} \frac{1}{\ep}\chi'\Bigl(\frac{\alp_0-\theta}{\ep}\Bigr) h(r,\theta)\, rdrd\theta,
\end{equation*}
with $h\in W^1_1(T)$.
We first perform a change of variables that fixes the domain of integration.
For $(\rho, t)$ in $[0,L_{\beta_0}(\alpha_0)]\times [0,1]$, we define
\begin{equation*}
  \begin{split}
    \theta &= \alpha_0 - \ep t, \\
    \delta(\rho,t)&= \frac{1}{\ep}\left [ L_{\beta_0}(\alpha_0-\ep t)-L_{\beta_0}(\alpha_0)\right ]
    \chi\Bigl(\frac{L_{\beta_0}(\alpha_0)-\rho}{L_{\beta_0}(\alpha_0)}\Bigr),\\
    r&=\rho\,+\,\ep \delta(r,t).
  \end{split}
\end{equation*}
Observe that $\delta$ also depends on $\ep$.

The mapping $\Phi$ is seen to be a diffeomorphism from $[0,L_{\beta_0}(\alpha_0)]\times [0,1]$
onto $W_\ep$, which is a neighbourhood in $T$ of the side $\{ (r,\alpha_0):~r\in [0,L_{\beta_0}]\}$
that shrinks when $\ep$ goes to $0$. Using $\Phi$ as a change of variables, we obtain
\begin{equation*}
  \Abb_\ep(h)\,=\, \int_0^1\int_0^{L_{\beta_0}(\alpha_0)} \chi'(t) h(\rho+\ep \delta,\alpha_0-\ep t)
  \left[ 1+\ep \partial_\rho\delta\right ](\rho+\ep\delta) d\rho dt.
\end{equation*}
We obtain a sum of three terms:
\begin{equation*}
  \begin{split}
    \Abb^0_{\ep}(h)&=\,\dis \int_0^1\int_0^{L_{\beta_0}(\alpha_0)} \chi'(t) h(\rho+\ep \delta,\alpha_0-\ep t)\rho d\rho dt,\\
    R^1_{\ep}(h)&= \dis \int_0^1\int_0^{L_{\beta_0}(\alpha_0)} \chi'(t) h(\rho+\ep \delta,\alpha_0-\ep t)\ep \partial_\rho\delta \rho d\rho dt,\\
    R^2_{\ep}(h)&=\,\dis \int_0^1\int_0^{L_{\beta_0}(\alpha_0)} \chi'(t) h(\rho+\ep \delta,\alpha_0-\ep t)
  \left[ 1+\ep \partial_\rho\delta\right ]\ep\delta) d\rho dt.
  \end{split}
\end{equation*}
Using that $\delta$ is identically $0$ if $\rho < \frac{1}{3}$ and that
$\delta$ and its first-order derivatives are uniformly bounded with respect to $\ep, t, \rho$, by undoing the change of variables
we see that the terms $R^i, \, i=1,2$ are bounded by the integral of $|h|$ over $W_\ep$. Since $h\in L^1$ the latter goes to $0$
and we are left to study the limit of $\Abb^0_\ep$.

We now set
\begin{equation*}
  R^0_\ep(h)\,=\, \Abb^0_{\ep}(h)\,+\,\int_0^{L_{\beta_0}(\alpha_0)} h(r,\alpha_0) r\,dr
\end{equation*}
so that, as above
\begin{equation*}
  R^0_\ep(h)\,=\, \int_0^1\int_0^{L_{\beta_0}(\alpha_0)}\chi'(t)\left[h(\rho+\ep \delta,\alpha_0-\ep t)-h(\rho,\alpha_0)\right]\, \rho d\rho dt.
\end{equation*}
Fix $(\rho,t)\in [0,L_{\beta_0}(\alpha_0)]\times [0,1]$.
The mapping $\gamma(\cdot\,;\,\rho,t)\,:\,s\mapsto (\rho \,+\, s\ep \delta(\rho, st), \alpha_0-\ep st)$
sends the interval $[0,1]$ onto a smooth curve in $T$ that stays within $W_\ep$. Since $h$ is $W^1_1$, we have
\begin{multline*}
    R^0_\ep(h)=\,\dis \int_0^1\int_0^{L_{\beta_0}(\alpha_0)}\int_0^1 \chi'(t)\bigl[\ep(\delta(\rho,st)+st\partial_t\delta(\rho,st))
    \partial_rh\circ \gamma(s\,;\,\rho,t) \\
    -\,\ep t \partial_\theta\circ \gamma(s\,;\,\rho,t)\bigr] \rho dsd\rho dt.
\end{multline*}
We now argue in a similar fashion as before: we define $\Psi$ on $[0,L_{\beta_0}(\alpha_0)]\times [0,1]\times [0,1]$
by $\Psi(\rho,t,s)\,=\,\left(\rho, \gamma(s\,;\,\rho,t)\right)$. We show that it is a legitimate change of variables,
which turns $R^0_\ep$ into an integral that is bounded above by the integral of a $L^1$ function over $W_\ep$.
It thus tends to $0$ and that makes the proof finally complete.

\end{document}